\DeclareMathAlphabet{\pazocal}{OMS}{zplm}{m}{n}
\DeclareMathAlphabet{\mathcalligra}{T1}{calligra}{m}{n}
\DeclareMathOperator*{\esssup}{ess\,sup}
\newtheorem{theorem}{Theorem}
\newtheorem{lemma}{Lemma}
\newtheorem{definition}{Definition}
\newtheorem{remark}{Remark}
\newtheorem{proposition}{Proposition}
\newtheorem{corollary}{Corollary}
\newcommand{\leqnomode}{\tagsleft@true}
\newcommand{\reqnomode}{\tagsleft@false}
\newcommand*\diff{\mathop{}\!\mathrm{d}}
\def\({\begin{eqnarray}}
\def\){\end{eqnarray}}
\def\[{\begin{eqnarray*}}
\def\]{\end{eqnarray*}}
\def\part#1#2{\frac{\partial #1}{\partial #2}}
\title{Opinion dynamics for an increasing population of agents. A symmetric continuous agent model.}
\author{Ioannis Markou}
\begin{document}

\maketitle

\begin{abstract}
In this paper we formulate a continuous opinion model that takes into account population growth, i.e. increase with time in the number of interacting agents $N(t)$. In our setting the population growth is governed by a generic growth rate function $b(t, N(t))$. The two main components of our model are the growth rate $b(t, N(t))$, as well as the opinions of the incoming agents which are modeled in our system as boundary conditions in a free boundary problem. We give results on the well-posedness of the model and results that showcase how these two components affect the long time asymptotic behavior of our system. Moreover, we provide a kinetic (probabilistic) description of our model and give results on well-posedness and asymptotics for the kinetic model.
\end{abstract}

\textbf{Keywords:} Opinion dynamics, continuous models, population growth, long time asymptotics.

\textbf{AMS subject classification:} 91C20, 91D30, 92D25, 92D50.

\section{Introduction}
\label{sec: Introduction}
The past few decades have witnessed an unprecedented soaring interest in the study of multi-agent Individual Based models (IBM). IBMs aim to explain the emergence of complex patterns that appear in natural and social phenomena starting from very simple local rules that govern the interactions between individuals. Of particular interest among IBMs are models of social dynamics which study the evolution of opinions among interacting agents with the aim of finding the possibility of consensus in opinions. The literature for the study of social dynamics models, including opinion models, is vast and for that reason we direct the reader to the excellent review article \cite{CaFoLo}. A specific instance of an opinion model is the Krause dynamical model introduced by U. Krause in \cite{Kra} (oftentimes referred as the Hegselmann-Krause model due to \cite{HegKraus}). In this model opinions are represented by real variables or more generally by real vectors in $\mathbb{R}^d$. After each time step every agent updates their opinion by averaging all the opinions that lie in some neighborhood, with the metric distance being the usual Euclidean distance. From its introduction, the research in the Krause model has taken several directions see e.g. \cite{AyDu, BlHeTs1, BlHeTs2, CaFaTi, Dut, Has1, JabMot, NuGoWo, PiDuTr, Tos} etc.
The focus of this paper is to study the effect of population growth in a simple model of opinion dynamics, i.e. a symmetric version of the Krause model that is also continuous in the space of agents.

Population growth, and more generally population change in IBM systems is not just a matter of academic curiosity but a phenomenon that should be taken into account in many social and physical systems. The results of population growth in systems where individuals interact by exchanging opinions, trading goods, compete with each other etc. are all too important to be ignored. Below, we give some concrete examples of systems where collective behavior is studied and it is of utmost importance to take into consideration population increase or change with time:
\\ (i) \textit{Expanding communities}: Population growth has many effects in the demographics of communities of all scales, ranging from small groups that share some common trait, all the way up to large nations. At the same time, even groups of people that do not necessarily share the same ethnic identity (such as religious groups, groups that belong in certain socio-economic class etc.) grow in power as their underlying population expands. Around the time people reach adulthood they obtain voting rights in most countries and they can take part in elections and referendums. A constantly growing population means that the groups of people that take part in the decision making process expand as well. It is therefore an erroneous idea to assume that systems in which people interact exchanging opinions and reaching consensus are always of a fixed population.
\\ (ii) \textit{Growing economies}: The effects of population growth to the economies of nations is a reality too important to be ignored. All countries strive to achieve economic expansion, which to a large extend is based on their population growth. One particular instance of how population expansion affects economy is in the redistribution of wealth among the population. New individuals constantly enter the economy with a specific consumer power participating in the exchange of goods and wealth production. One expects the effect of newcoming members in the economy to change the trading propensity of large populations resulting in possible changes in the wealth distribution curve (see e.g. \cite{CaMo, CoPaTo}).
\\ (iii) \textit{Online gaming}: In the past couple of decades the gaming industry has seen tremendous growth in the number of participants, since the internet helped with the globalization of this industry. Online gaming communities usually categorize players by strength by assigning a rating to each player. One very successful way to rate people who participate in gaming is the ELO rating system \cite{Elo} (see also \cite{DurEvWol, DurTorWol, JabJun}). According to this rating system, the strength of each player is achieved by their success rates with other players. In other words, the rating difference between two players measures the probability of outcomes in a game between them. The constant influx of new players in online playing sites has a direct effect on the distribution of rating among all players. Players that enter the system are assigned a provisional rating that does not correspond to their true rating strength. As a result, over time the ratings of players might be inflating or deflating depending on whether the provisional rating of incoming players is high or low compared to their true rating.

\textbf{Population growth equation.} In this paper we assume a growing population $N_t:=N(t)$ increasing with time, governed by the simple equation ($\dot{N}_t$ is the time derivative of $N_t$)
\begin{equation} \label{eq: growth} \dot{N}_t=b(t,N_t)N_t, \quad t \geq 0 ,\end{equation}
for a continuous, nonnegative growth rate $b(t,N_t)\geq 0$ and initial population $N_0$. Note that eq. \eqref{eq: growth} is another way of expressing a growth equation of the general form $\dot{N}_t=F(t, N_t)$ (for nonnegative $F(t, N_t) \geq 0$), but we choose to describe it as in \eqref{eq: growth} for convenience. Since $N_t$ is increasing in time it converges to some value $N_{\infty}:=\sup \limits_{t \geq 0} N_t$, which can be either finite ($<+\infty$), or infinite ($+\infty$). The semi-explicit solution of \eqref{eq: growth} is given by $N_t=e^{\int_0^t b(s,N_s) \diff s}N_0$, which implies that the condition for $N_{\infty}<\infty$ is $\int_0^{\infty} b(s,N_s) \diff s <\infty$ and likewise $N_\infty= \infty$ iff $\int_0^\infty b(s, N_s)\, \diff s=\infty$. Typically, the choice of growth rate $b(t, N_t)$ is of paramount importance to the long time asymptotics of our system. For instance, an integrable $b(t, N_t)$ implies that in the long term the population converges to some fixed value.
\begin{remark}
A specific instance is a rate function $b(t)$, in which case $N_\infty$ is given by the explicit formula $N_\infty= e^{\int_0^\infty b(s)\, \diff s}N_0$.
More specifically, we can consider that for $t \gg 1$, $b(t)$ behaves like $b(t) \sim t^{-\alpha}$ (for $\alpha \in \mathbb{R}$), where
$N_\infty =\infty$ iff $\alpha \leq 1$. The critical case $\alpha=1$ gives $N_t \sim t N_0$ which is practically  \textbf{linear} population growth. For $\alpha=0$ then $N_t \sim e^{t}N_0$ which corresponds to \textbf{exponential} growth. For the intermediate values $0 < \alpha <1$ then $N_t \sim e^{t^{1-\alpha }}N_0$. This corresponds to growth that is \textbf{subexponential} or superlinear (between linear and exponential). Finally, when $\alpha <0$ the growth is \textbf{superexponential}.
\end{remark}
\textbf{Opinions of incoming agents.} The next major assumption we make, other than the rule for the population growth, is that we have to prescribe some arbitrary ``opinion'' value to the incoming population. We should mention that the rule that decides the opinions of incoming agents is an obvious modeling assumption and we identify two classes of assumptions for the incoming population. The first class is a deterministic rule that assigns to each agent that is added at time $t$ a specific opinion $X(t, N_t)$. We make the conscious choice of considering a profile for the incoming opinions that is not just a function of time, but quite possibly of the population $N_t$, since the opinions of incoming agents at time $t$ might also depend on the growth rates $b(t, N_t)$. Another very realistic assumption for opinions of the incoming population is to consider that opinions incoming at time $t$ follow some stochastic process (with zero mean so that the average of incoming population does not drift to infinity!). In this paper we focus only on the assumption of deterministic incoming opinions. For  a simple model involving random incoming opinions see \cite{Mar2}.

It is very intuitive to assume that both the profile of the opinions for incoming agents and the growth rate of the population have a direct effect on the limiting distribution of opinions. In particular, the fastest the growth rate, the more one expects that the initial opinion profile plays little role on the final opinions, and on the other hand the opinion profile of incoming agents plays the major role. To be more precise, one expects that when $N_\infty=\infty$ then there should be no contribution of the initial opinions to the behavior of the aggregate of opinions at infinity, as it turns out being the case.

The remainder of this paper is organized as follows. In Section \ref{sec: Main results} we present our novel model and we give a characterization of the type of solutions that this model might admit for well-posedness. We also discuss briefly the macroscopic description of the model and present the main results. In Section \ref{sec: Well-Posedness} we give results on the well-posedness of the model. In Section \ref{sec: Moments} we study the moments of our model, and in Section \ref{sec: Convergence} we give several results on asymptotics with the help of a variance functional we introduce. In Section \ref{sec: Macroscopic} we give a kinetic formulation to our problem and present results on the asymptotics of the kinetic problem directly in relation with the continuous one. Finally, in Section \ref{sec: Conclusions} we draw conclusions and present many open problems that are related to our model but most importantly we give several interesting cases where our modeling idea can be implemented.

\section{Presentation of the model and main results.}
\label{sec: Main results}

The general $N$ agent consensus model in continuous time satisfies the system of equations
\begin{equation} \label{eq: consensus} \dot{x}_i =\sum \limits_{j=1}^N a_{ij} (x_j -x_i), \qquad 1 \leq i \leq N, \quad t \geq 0 . \end{equation} Each equation in \eqref{eq: consensus} describes the evolution in time of a certain state (or ``opinion'') variable $x_i(t) \in \mathbb{R}^d$, for $i=1, \ldots, N$. This system is supplemented with some initial conditions $x_i(0)=x_{i0}$. The most important component of the consensus model is the interaction matrix $\{a_{ij}\}_{i,j=1,\ldots,N}$. For our purposes the interaction coefficients $a_{ij}$ depend on the positions of the agents, and in particular of the metric distances between agents.

More specifically there are two variations of coefficient matrices that are mostly in use when the interactions depend on the metric distance between agents:
\begin{itemize}
\item (i) The symmetric case, where $a_{ij}=\frac{1}{N}\psi(|x_i -x_j|)$, for some non-increasing function $\psi(r)$ (possibly with compact support!)
\item (ii) The coefficients that were introduced in the original treatment of the consensus model by Hegselmann and Krause correspond to a stochastic matrix with elements $a_{ij}=\psi(|x_i -x_j|)/ \sum \limits_{k=1}^N \psi(|x_i-x_k|)$ (see \cite{HegKraus}).
 \end{itemize}

The influence functions $\psi(r)$ that are of theoretical and practical importance generally belong in two main groups. The first category assumes a non-increasing and Lipschitz continuous $\psi(r)$ with a
Lipschitz constant $L_{\psi}>0$, i.e.
\begin{equation*} \label{eq: Psi_Lipsch} |\psi(r_1)-\psi(r_2)|\leq L_{\psi} |r_1-r_2|, \quad \forall r_1,r_2 >0 .\end{equation*}
We mention that such a function may or may not have a compact support on $[0,\infty)$. We can choose for instance a continuous c.w. $\psi(r)$ compactly supported on $[0,1)$ which is Lipschitz on this interval with $\psi(1^-)=0$. Otherwise another choice might be a Lipschitz continuous, non-increasing $\psi(r)$ which is positive everywhere.

Another common choice of influence function $\psi(r)$ assumes a non-increasing function with support on some interval $[0, R_0]$ (we can always set $R_0=1$), with a jump discontinuity at $r=1$ i.e. a function described by
\begin{equation} \label{eq: Psi_Discont} \psi(r)= \left\{ \begin{array}{ll}  \psi_0(r)  \qquad  \text{for} \, \, r  \leq 1, \\ 0  \qquad \text{for} \, \,  r > 1 ,\end{array} \right.\end{equation}
with $\psi_0(1^-)\neq 0$.
The most common example used in the literature of the HK model is the step function with $\psi_0(r):=1$. Obviously, from a practical perspective the most important and classical choice of a communication weight is exactly this, but in this paper we will work with Lipschitz continuous communication weights. We understand that from a practical point of vies, the most classical examples are weights of type \eqref{eq: Psi_Discont}, but we want to focus on the difficulties introduced by the population growth effect, and not the ones relevant to working with a non regularity. Therefore, we treat two main instances of weights  \begin{itemize}  \item \textbf{Type I}: An everywhere positive $\psi(r)>0$ (for $r \geq 0$), not necessarily nonincreasing. The only assumption besides L. continuity is the global boundedness i.e. that $\psi_M:=\sup \limits_{r \geq 0} \psi(r)<\infty$ and \item \textbf{Type II}: a communication weight with a compact support on $[0,1]$ which is L. continuous everywhere (even at $r=1$).  \end{itemize}

In \cite{BlHeTs1, BlHeTs2} the authors proposed a opinion models in the space of continuous agents in order to better understand the consensus dynamics of discrete agents. Their model, in its symmetric setting satisfies the continuous integro-differential equation
\begin{equation*} \dot{x}_t(s)=\int_0^1 \psi(|x_t(s')-x_t(s)|) (x_t(s')-x_t(s))\, \diff s' , \quad 0 \leq s \leq 1 ,\end{equation*}
where $\psi(r)$ is the indicator of the unit interval $[0,1]$, i.e. $\psi(r):=\mathbf{1}_{[0,1]}(r)$.

The opinion model that we introduce taking into account the population growth carries a similar structure and is characterized by the following evolution for the state variable $x_t(s)\in \mathbb{R}^d$ for $t \geq 0$,
\begin{equation} \label{eq: conses_symm} \dot{x}_t(s)=\frac{1}{N_t} \int_0 ^{N_t} \psi(|x_t(s')-x_t(s)|) (x_t(s')-x_t(s))\, \diff s' , \quad 0 \leq s < N_t .\end{equation} Here the continuous variable $s$ takes values in the interval $[0,N_t)$ that is expanding
with time. As initial condition we prescribe an initial profile $x_0(s)$ defined on the interval $[0, N_0]$.

As a way of defining this problem properly we need to provide not just initial condition for $s \in [0,N_0)$ but also a condition that assigns values to the opinions of incoming agents at time $t$, i.e. define the value for agent $s=N_t$ at time $t$. This implies that we need to supplement our model with the condition for the free boundary that characterizes states of incoming agents, i.e.
\begin{equation} \label{eq: Mov_Bound_Cond} x_t(N_t)=X(t,N_t), \quad \text{for} \quad t \geq 0, \end{equation} where $X(t,N_t)$ is some function for $t \geq 0$. The condition on the free boundary is necessary for the well-posedeness of the continuous model. We will assume that the $X(t, N_t)$ function is bounded uniformly by some value $X_B$, i.e.  \begin{equation} \label{eq: X(t)_cond} |X(t, N_t)|\leq X_B \qquad \forall t \geq 0. \end{equation}

In order to study a solution $x_t(s)$ we should properly describe the domain over which $x_t(s)$ is defined. We first fix some $T>0$. Then for every increasing profile $N_t$ a solution is defined on a domain $D_T$ which can be described in a couple of possible ways. First, we can write $D_T=\{ (s,t) : 0 \leq t \leq T, \quad 0 \leq s \leq N_t \}$. On the other hand, since we want a description of the solution achieved by forward integration in time starting from the initial or boundary data, we may also write $D_T$ as $D_T=D_{T_1}\cup D_{T_2}$, where
\begin{equation*} D_{T_1}:=\{ (s,t) : 0 \leq s \leq N_0 , \, \, 0 \leq t \leq T \} ,\quad D_{T_2}:=\{(s,t) : N_0 <s \leq N_T, \, \,  N^{-1}(s)\leq t \leq T \}.\end{equation*}
\begin{remark} In the definition of inverse $N^{-1}$ of the population function $N_t$ it is important to note that since it is a non decreasing function, that can be constant on intervals, we have used the generalized inverse definition
\begin{equation*} N^{-1}(s):=\inf \{ t \in \mathbb{R}_+ : N_t > s\} . \end{equation*}
\end{remark}
Now that we have defined $D_T$ this way, we may integrate forward in time to get the $x_t(s)$ value
\begin{equation} \label{eq: Proper} x_t(s) \! = \! \! \left\{ \begin{array}{ll}  \! \! \! x_0(s)+\int_0^t \diff t' \frac{1}{N_{t'}}\int_0^{N_{t'}} \diff s' \psi(|x_{t'}(s')-x_{t'}(s)|) (x_{t'}(s')-x_{t'}(s)) , \quad (s,t)\in D_{T_1} , \\ \\ \! \! \!X(N^{-1}(s),s)+\int_{N^{-1}(s)}^t \! \diff t' \frac{1}{N_{t'}}\! \int_0^{N_{t'}} \! \diff s' \psi(|x_{t'}(s')-x_{t'}(s)|) (x_{t'}(s')-x_{t'}(s)), \quad (s,t)\in D_{T_2}. \end{array} \right.\end{equation}

This description allows us to introduce a ``mild'' notion of solution which does not require differentiability (in time) of $x_t(s)$. In fact, we define both the classical and a mild solution (in these sense that was introduced in \cite{BlHeTs2}) in the following manner
\begin{definition} Let $T>0$ and some given population growth profile $N_t$ for $0\leq t \leq T$. Then we have:
\\ (i) A \textit{classical} solution of \eqref{eq: conses_symm} is a solution $x_t(s)$ that satisfies \eqref{eq: conses_symm} for every $(s,t) \in D_T$, with initial profile $x_0(s)$ that also satisfies \eqref{eq: Mov_Bound_Cond}. Thus, a classical solution is time-differentiable everywhere in $D_T$.
\\ (ii) A \textit{mild} solution of \eqref{eq: conses_symm} is a solution that satisfies \eqref{eq: Proper} for all $(s,t) \in D_T$.
\end{definition}

In the following result we show that when the interaction function $\psi(r)$ is Lipschitz continuous, then as one would expect a classical solution exists.
\begin{theorem} \label{thm: Well_posedness1} Assume an increasing population $N_t$ governed by \eqref{eq: growth}. Let $x_0 \in L^{\infty}([0,N_0];\mathbb{R}^d)$, and let $\psi(\cdot)$ be a Lipschitz interaction kernel with a Lipschitz constant $L_\psi>0$. Let also $X(t, N_t)$ be a given boundary condition for the states of incoming agents that satisfy \eqref{eq: X(t)_cond}. Then, for any $T>0$, there exists a unique classical solution $x_t(s)$ of \eqref{eq: conses_symm} for $(s,t) \in D_T$, with initial condition $x_0(s)$, that satisfies the boundary condition \eqref{eq: Mov_Bound_Cond} for $0 \leq t \leq T$. \end{theorem}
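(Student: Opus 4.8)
The plan is to set up a fixed-point (Picard iteration / Banach contraction) argument on a suitable space of bounded functions on $D_T$, exploiting the Lipschitz continuity of $\psi$ together with the a priori $L^\infty$ bound coming from the boundary data and the initial profile. First I would observe that the natural space is $X := \{ x \in C(D_T;\mathbb{R}^d) : x|_{D_{T_1}}(\cdot,0)=x_0(\cdot),\ x_t(N_t)=X(t,N_t)\}$, or rather the slightly more flexible space of functions $x_t(\cdot)\in L^\infty$ in the $s$-variable with a prescribed modulus of continuity in $t$; I would equip it with a weighted sup-norm $\|x\|_\lambda := \sup_{(s,t)\in D_T} e^{-\lambda t}|x_t(s)|$ for $\lambda$ large. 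Define the operator $\mathcal{T}$ by the right-hand side of \eqref{eq: Proper}, so that fixed points of $\mathcal{T}$ are exactly mild solutions. The key structural point, which I would establish first as a lemma, is an a priori bound: if $x$ is any mild solution then $\sup_{(s,t)\in D_T}|x_t(s)| \le M := \max(\|x_0\|_{L^\infty}, X_B)$. This follows because the right-hand side of \eqref{eq: conses_symm} is an average of terms $\psi(\cdot)(x_{t'}(s')-x_t(s))$ that point "inward": evaluating the dynamics at a coordinate of maximal modulus, or more cleanly by a Gronwall-type estimate on $\sup_s |x_t(s)|$ using that $|x_{t'}(s')-x_t(s)|\le 2\sup_s|x_{t'}(s)|$ and that new agents enter with $|X|\le X_B\le M$, keeps the sup bounded by $M$ for all time. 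So it suffices to run the fixed-point argument on the ball of radius $M$ in $X$, on which $\psi$ is bounded by some $\Psi_M$ and the integrand is globally Lipschitz in $x$.

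Next I would verify that $\mathcal{T}$ maps this ball into itself and is a contraction in $\|\cdot\|_\lambda$ for $\lambda$ large enough. For two candidates $x, y$ in the ball, the difference $\mathcal{T}x - \mathcal{T}y$ at $(s,t)$ is an integral from the appropriate starting time ($0$ on $D_{T_1}$, $N^{-1}(s)$ on $D_{T_2}$) of a difference of the nonlinear terms. Writing $F(x_{t'})(s) := \frac{1}{N_{t'}}\int_0^{N_{t'}}\psi(|x_{t'}(s')-x_{t'}(s)|)(x_{t'}(s')-x_{t'}(s))\diff s'$, the map $x\mapsto F(x)$ is Lipschitz from the ball into $L^\infty$ with a constant $C(M,L_\psi,\Psi_M)$ depending only on $M$, $L_\psi$ and $\Psi_M$ — here one uses $|\psi(|a|)a - \psi(|b|)b|\le \Psi_M|a-b| + L_\psi|a-b|\,(|a|) \le (\Psi_M + 2M L_\psi)|a-b|$ applied with $a = x_{t'}(s')-x_{t'}(s)$, $b = y_{t'}(s')-y_{t'}(s)$, and $|a-b|\le 2\sup_{s}|x_{t'}(s)-y_{t'}(s)|$. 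Then
\[
e^{-\lambda t}\,|\mathcal{T}x_t(s)-\mathcal{T}y_t(s)| \le e^{-\lambda t}\int_{t_*(s)}^t C \sup_{s'}|x_{t'}(s')-y_{t'}(s')|\,\diff t' \le \frac{C}{\lambda}\,\|x-y\|_\lambda,
\]
so choosing $\lambda > C$ makes $\mathcal{T}$ a contraction. The generalized inverse $N^{-1}$ appearing as the lower limit on $D_{T_2}$ is harmless: $N^{-1}$ is nondecreasing (constant on intervals where $N_t$ is constant) and measurable, and on $D_{T_2}$ one has $t \ge N^{-1}(s)$ by construction, so the integral is well defined; continuity of $x_t(s)$ across the "ingoing boundary" curve $s = N_t$ is enforced by the boundary condition $x_{N^{-1}(s)}(s) = X(N^{-1}(s),s)$, which I would check is consistent with \eqref{eq: Mov_Bound_Cond}.

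By the Banach fixed-point theorem this yields a unique mild solution $x_t(s)$ on $D_T$. It remains to upgrade mild to classical. Since $\psi$ is continuous and $x$ is continuous on the compact sets $D_{T_1}, D_{T_2}$ (continuity in $t$ for fixed $s$ is immediate from the integral formula; continuity in $s$, hence joint continuity, follows by another Gronwall estimate comparing $x_t(s)$ and $x_t(\tilde s)$ using Lipschitzness of $\psi$ and of the data $x_0$ and $X(\cdot,\cdot)$ — this is the one place I would need a regularity hypothesis on $X$ beyond mere boundedness, or else accept continuity only in $t$), the integrand $t'\mapsto F(x_{t'})(s)$ is continuous, so by the fundamental theorem of calculus the right-hand side of \eqref{eq: Proper} is $C^1$ in $t$ and its derivative is exactly the right-hand side of \eqref{eq: conses_symm}; thus $x$ is a classical solution, and it satisfies \eqref{eq: Mov_Bound_Cond} by construction. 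Uniqueness among classical solutions follows since any classical solution is a mild solution. The main obstacle I anticipate is precisely the bookkeeping around the moving/free boundary: making sure the two-piece definition \eqref{eq: Proper} glues into a single continuous (indeed $C^1$-in-$t$) function across $s=N_t$, that the weighted-norm contraction estimate is uniform over both pieces, and that the generalized inverse $N^{-1}$ does not introduce pathologies on the (measure-zero, but possibly nonempty) set of $s$ where $N_t$ is flat; the purely nonlinear estimates on $\psi$ are routine given the a priori bound.
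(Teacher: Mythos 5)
Your proposal is essentially the paper's own argument: a Banach fixed-point iteration on the integral (mild) formulation \eqref{eq: Proper}, using the a priori bound $\pazocal{R}=\max\{\|x_0\|_{L^\infty}, X_B\}$ (the paper's Lemma \ref{lemma: Bound}) to make the integrand Lipschitz with constant of the form $1+2L_\psi\pazocal{R}$, and then upgrading the fixed point to a solution that is $C^1$ in time with values in $L^\infty$ in $s$. The only difference is cosmetic: you obtain a contraction on all of $[0,T]$ via the weighted norm $\sup e^{-\lambda t}|x_t(s)|$, whereas the paper contracts on a short interval $[0,\widetilde T]$ with $\widetilde T\leq (2(1+2L_\psi\pazocal{R}))^{-1}$ and then continues the solution stepwise; both variants share the same key estimates and the same (mild) looseness about verifying invariance of the ball of radius $\pazocal{R}$ under the iteration map.
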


The next natural question is the study of the asymptotics of system \eqref{eq: conses_symm}. The main feature of the model, is that unlike the system with a fixed population, here the average defined by $m_1(t):=\frac{1}{N_t}\int_0^{N_t} x_t(s)\, \diff s$ is not conserved due to the influx of new agents with a prescribed state. There are two main options for population growth depending on whether $N_t$ remains uniformly bounded in time ($N_\infty < \infty$), or it tends to infinity, i.e. $N_\infty=\infty$. The asymptotic behavior when $N_\infty < \infty$ resembles one of a system with a fixed number of agents, and the opinions tend converge to clusters or a global consensus.

When $N_\infty=\infty$, it is clear that the influence of the opinions of incoming agents is going to play the major role in the asymptotic behavior of the system. The determining factor of the long term asymptotic dynamics is whether the average $m_1(t)$ will converge to $X(t, N_t)$. We show that the following condition
\begin{equation*} \frac{1}{N_t}\int_0^t X(s, N_s) \dot{N}_s \, \diff s \xrightarrow{ t \to \infty }
 X(t, N_t), \qquad \qquad (C1) \end{equation*}
is necessary and sufficient for $m_1(t) \xrightarrow{ t \to \infty } X(t, N_t)$.
We may understand condition $(C1)$ better if we assume that $X(t, N_t)$ is differentiable in time and perform a simple integration by parts to write $(C1)$ as
\begin{equation*} \frac{1}{N_t} \int_0^t \dot{X}(s, N_s) N_s \, \diff s \xrightarrow{ t \to \infty } 0 . \qquad \qquad (C1') \end{equation*}
The time derivative $\dot{X}(t, N_t)$ is $\dot{X}(t, N_t)=X_t(t, N_t)+X_N(t, N_t)\dot{N}_t$. The new condition $(C1')$  indicates the role that the rate of change of $X(t, N_t)$ plays for $(C1)$ to hold. That being said, we should point that $X(t, N_t)$ approaching a constant value, or similarly $\dot{X}(t, N_t)$ diminishing as $t \to \infty$ is neither a necessary nor a sufficient condition for $(C1)$ or $(C1')$ to hold as we also have the dependence on the population growth rate as $N_t \to \infty$.
Moreover, condition $(C1)$ implies the vanishing of the total variance defined by $\pazocal{V}(x_t(\cdot)):=\frac{1}{N_t} \int_0^{N_t}|x_t(s)-m_1(t)|^2\, \diff s$, i.e. that $\pazocal{V}(x_t(\cdot)) \xrightarrow{ t \to \infty } 0$.

We have the following result that holds for communication weights of both \textbf{Types I} and \textbf{II}:
\begin{theorem} \label{thm: Convergence1} Assume an increasing population $N_t$ that evolves according to \eqref{eq: growth}. Let  $x_t(s)$ be a solution of \eqref{eq: conses_symm}, for $t>0$ and $s \in [0, N_t]$, for the given population $N_t$. Assume also that $X(t, N_t)$ is a continuous profile for the states of incoming population. We have the following regarding the long time behavior of $x_t(s)$:
\\
(i) If $N_\infty < \infty$ the average $m_1(t)$ converges to some explicit value $m_*$, i.e.
\begin{equation*} m_1(t) \xrightarrow{ t \to \infty } m_*:=\frac{N_0}{N_\infty}m_1(0)+\frac{1}{N_\infty}\int_0^\infty X(s, N_s) \dot{N}_s \, \diff s. \end{equation*}
In particular, opinions form $J \in \mathbb{N}$ disjoint clusters $ C_j \subset [0,N_{\infty}] $, for $j=1, \ldots, J$  s.t. $\sum \limits_{j=1}^J  m(C_j)=N_\infty$. The clusters are characterized by the following property : For almost all pairs $(s,s') \in C_i \times C_j$, then
\begin{equation*} |x_t(s)-x_t(s')| \xrightarrow{t \to \infty} 0 \quad \text{for} \quad i=j, \qquad \text{and} \quad  \liminf \limits_{t \to \infty} |x_t(s)-x_t(s')| \geq 1 \quad \text{if} \quad i\neq j . \end{equation*}
\\
(ii) If $N_\infty=\infty$ and $(C1)$ holds we then have that $m_1(t)  \xrightarrow{ t \to \infty } X(t, N_t) $. Furthermore, the variance vanishes, i.e. $\pazocal{V}(x_t(\cdot))  \xrightarrow{ t \to \infty } 0$. Moreover, when $X(t, N_t)$ has a constant value $X_c$, then all opinions eventually converge to that value and the total variance dissipates at a specific rate, i.e. \begin{equation*} \pazocal{V}(x_t(\cdot))\leq (\pazocal{V}(x_0(\cdot))+|m_1(0)-X_c|^2) \, e^{-\int_0^t b(s, N_s) \, \diff s}.\end{equation*}
\end{theorem}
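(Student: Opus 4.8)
The whole argument is driven by two identities obtained by differentiating in time and exploiting the antisymmetry of the right-hand side of \eqref{eq: conses_symm}. Writing $P(t):=\int_0^{N_t}x_t(s)\,\diff s$ and differentiating under the moving integral, the boundary term is $\dot N_t\,x_t(N_t)=\dot N_t\,X(t,N_t)$ by \eqref{eq: Mov_Bound_Cond}, while $\int_0^{N_t}\dot x_t(s)\,\diff s=0$ because the integrand of \eqref{eq: conses_symm} is odd under $s\leftrightarrow s'$; hence $\dot P(t)=\dot N_t X(t,N_t)$ and, using $\dot N_t/N_t=b(t,N_t)$,
\begin{equation*}
\dot m_1(t)=b(t,N_t)\bigl(X(t,N_t)-m_1(t)\bigr),\qquad m_1(t)=\frac{N_0}{N_t}m_1(0)+\frac{1}{N_t}\int_0^t \dot N_s\,X(s,N_s)\,\diff s .
\end{equation*}
Next I would write $\pazocal{V}(x_t(\cdot))=\frac{1}{2N_t^{2}}\int_0^{N_t}\!\!\int_0^{N_t}|x_t(s)-x_t(s')|^2\,\diff s'\diff s$ and differentiate: the two (equal) boundary terms contribute $2\dot N_t\int_0^{N_t}|X(t,N_t)-x_t(s)|^2\,\diff s$, the interior term after symmetrizing in $s,s'$ equals $-2N_t^{2}D(t)$ with $D(t):=\frac{1}{N_t^{2}}\int_0^{N_t}\!\!\int_0^{N_t}\psi(|x_t(s)-x_t(s')|)|x_t(s)-x_t(s')|^2\,\diff s'\diff s\ge 0$, and expanding $|X-x_t(s)|^2$ about $m_1(t)$ collapses everything to the master identity
\begin{equation*}
\tfrac{\diff}{\diff t}\pazocal{V}(x_t(\cdot))=-D(t)-b(t,N_t)\pazocal{V}(x_t(\cdot))+b(t,N_t)\,|X(t,N_t)-m_1(t)|^2 .
\end{equation*}
I also use the uniform bound $|x_t(s)|\le R:=\max(\|x_0\|_{L^\infty},X_B)$, which comes from a maximum-principle argument as in the proof of Theorem~\ref{thm: Well_posedness1} (the quantity $\operatorname{ess\,sup}_{s}|x_t(s)|$ cannot rise above $\max(\|x_0\|_{L^\infty},X_B)$), so in particular $|X(t,N_t)-m_1(t)|\le 2R$.

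For part (i), $N_\infty<\infty$ is equivalent to $\int_0^\infty b(s,N_s)\,\diff s<\infty$, and $\int_0^\infty\dot N_s\,\diff s=N_\infty-N_0<\infty$ makes $\int_0^t\dot N_s X(s,N_s)\,\diff s$ convergent as $t\to\infty$; letting $t\to\infty$ in the explicit formula for $m_1$ gives $m_1(t)\to m_*$ exactly as stated. Integrating the master identity and using $\pazocal{V}\ge 0$, $|X-m_1|\le 2R$ and $\int_0^\infty b<\infty$ yields $\int_0^\infty D(t)\,\diff t<\infty$ together with convergence of $\pazocal{V}(x_t(\cdot))$. For \textbf{Type I}, $\psi$ is continuous and strictly positive on the compact set $[0,2R]$, so $\psi\ge\psi_R>0$ there and $D(t)\ge 2\psi_R\pazocal{V}(x_t(\cdot))$, whence $\tfrac{\diff}{\diff t}\pazocal{V}\le-2\psi_R\pazocal{V}+4R^2 b(t,N_t)$; a Grönwall estimate combined with $\int_0^\infty b<\infty$ (splitting the convolution integral at a large time) gives $\pazocal{V}(x_t(\cdot))\to 0$, i.e. a single cluster ($J=1$, global consensus) with all states tending to $m_*$. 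For \textbf{Type II} the clustering follows the now-standard Hegselmann--Krause continuum argument of \cite{BlHeTs1,BlHeTs2}: one first shows the profile $x_t(\cdot)$ converges in $L^\infty$ to some $x_\infty(\cdot)$ (the time-summable influx being a perturbation of the fixed-population symmetric model), and then reads off from $\int_0^\infty D(t)\,\diff t<\infty$ that the essential range of $x_\infty$ is finite with pairwise gaps $\ge 1$ --- a positive-measure set of pairs stuck at a fixed distance in $(0,1)$ would keep $D(t)$ bounded below and contradict its integrability. Setting $C_j:=x_\infty^{-1}(\bar x_j)$ then gives the disjoint clusters with $\sum_j m(C_j)=N_\infty$ and the stated within/between-cluster asymptotics. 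This convergence-and-rigidity step for Type II is the only genuinely delicate point in the proof; everything else is the two derivative identities, bookkeeping of the moving-boundary terms, and Grönwall-type estimates.

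For part (ii), $N_\infty=\infty$ is equivalent to $\int_0^\infty b(s,N_s)\,\diff s=\infty$, so $N_0/N_t\to 0$ and the explicit formula for $m_1$ together with $(C1)$ gives $m_1(t)-X(t,N_t)\to 0$; set $\eta(t):=|X(t,N_t)-m_1(t)|^2\to 0$. Dropping the $-D(t)$ term in the master identity and using the integrating factor $e^{\int_0^t b(s,N_s)\diff s}=N_t/N_0$ gives $\tfrac{\diff}{\diff t}\bigl(\tfrac{N_t}{N_0}\pazocal{V}\bigr)\le\tfrac{\dot N_t}{N_0}\eta(t)$, hence $\pazocal{V}(x_t(\cdot))\le\frac{N_0}{N_t}\pazocal{V}(x_0(\cdot))+\frac{1}{N_t}\int_0^t\dot N_s\,\eta(s)\,\diff s$, and both terms tend to $0$ (the second is a weighted Cesàro mean of $\eta\to0$ with total weight $N_t-N_0\to\infty$), so $\pazocal{V}(x_t(\cdot))\to 0$. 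When $X(t,N_t)\equiv X_c$, solving $\dot m_1=b(X_c-m_1)$ gives $m_1(t)-X_c=\frac{N_0}{N_t}(m_1(0)-X_c)$, so $\eta(t)=\frac{N_0^2}{N_t^2}|m_1(0)-X_c|^2$; substituting into the previous inequality and using $\int_0^t\frac{\dot N_s}{N_s^2}\diff s=\frac{1}{N_0}-\frac{1}{N_t}\le\frac1{N_0}$ yields $\frac{N_t}{N_0}\pazocal{V}(x_t(\cdot))\le\pazocal{V}(x_0(\cdot))+|m_1(0)-X_c|^2$, which is precisely the claimed decay $\pazocal{V}(x_t(\cdot))\le(\pazocal{V}(x_0(\cdot))+|m_1(0)-X_c|^2)\,e^{-\int_0^t b(s,N_s)\diff s}$. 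Since $\int_0^\infty b=\infty$ this forces $\pazocal{V}(x_t(\cdot))\to0$, and with $m_1(t)\to X_c$ all opinions converge to $X_c$.
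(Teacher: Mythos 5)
Your moment identity, the master variance identity, and all of part (ii) line up with the paper's own route (Proposition \ref{proposition: moment_Deriv}, Corollary \ref{corollary: m_1}, Proposition \ref{proposition: V_deriv}, Proposition \ref{proposition: C1_criterion}); in particular your derivation of the constant-$X_c$ bound via $\int_0^t \dot N_s/N_s^2\,\diff s \le 1/N_0$ reproduces the stated estimate, and your Gr\"onwall argument for Type I weights in part (i) is a clean alternative to the paper's Barbalat-lemma argument (Theorem \ref{thm: Convergence2}(i)) that does not need $b\in L^\infty$.

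The genuine gap is the clustering claim in part (i) for Type II weights. You dispose of it by asserting that ``one first shows the profile $x_t(\cdot)$ converges in $L^\infty$ to some $x_\infty(\cdot)$'' and then reads the gap structure off $\int_0^\infty D(t)\,\diff t<\infty$. That first assertion is not proved, it is not in the cited references in the form you need (a moving-boundary, time-summable influx is not obviously ``a perturbation of the fixed-population model'' at the level of $L^\infty$ convergence of the profile), and it is strictly stronger than what the theorem actually claims. The paper deliberately avoids it: Proposition \ref{proposition: clustering} proves only the pairwise dichotomy --- for a.e.\ $(s,s')$ either $|x_t(s)-x_t(s')|\to 0$ or $\liminf_t|x_t(s)-x_t(s')|\ge 1$ --- by introducing the exceptional sets $A_\epsilon$ of pairs that visit the band $[\epsilon,1-\epsilon]$ along some sequence $t_n\to\infty$ and showing $m(A_\epsilon)=0$ from the integrability of $D$; the clusters are then the equivalence classes of the relation $s\sim s' \iff |x_t(s)-x_t(s')|\to 0$, with no limit profile ever constructed. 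Your heuristic that ``a positive-measure set of pairs stuck at a fixed distance in $(0,1)$ would keep $D(t)$ bounded below'' also skips the real difficulty: pairs need not be \emph{stuck} in the band, they may only visit it along a sparse sequence of times, and one must use a uniform-in-time Lipschitz bound on $t\mapsto x_t(s)$ (available since $\dot x_t$ is bounded by $2\psi_M\pazocal{R}$) to convert each visit into a time interval of fixed length on which the pair contributes a fixed amount to $\int D$, and then sum. You correctly flag this step as the delicate one, but flagging it is not proving it; as written, the Type II half of part (i) is not established.
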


When the interaction topology is such that all agents interact with each other and $\psi(r)>0$, for $r>0$, we have a stronger convergence result. In this case, when $N_\infty < \infty$ we have convergence to global consensus (mono-cluster formation), but more importantly when $N_\infty=\infty$ condition $(C1)$ is not necessary for a vanishing $\pazocal{V}(x_t(\cdot))$ as long as we have a population growth that is subexponential at infinity, i.e.
\begin{theorem} \label{thm: Convergence2} Let $x_t(s)$ be a solution of \eqref{eq: conses_symm}, for $t>0$ and $s \in [0 ,N_t]$, for the given population $N_t$. Assume also a positive influence function $\psi(r)>0$. The following results hold for the asymptotic behavior of $x_t(s)$:
\\
(i) If $N_\infty < \infty$ and $b(t, N_t)\in L^{\infty}([0,\infty);\mathbb{R}_+)$ all the opinions converge in Lebesgue measure to a consensus value which
is the long time average $m_*$, i.e. for every $\epsilon>0$
\begin{equation*} \lim \limits_{t \to \infty} m(s \in [0,N_{\infty}) \, : \, |x_t(s)-m_*|> \epsilon)=0  . \end{equation*}
\\
(ii) If $N_\infty = \infty$ and $b(t,N_t) \to 0$ as $t \to \infty$, then it follows that $\pazocal{V}(x_t(\cdot)) \xrightarrow{ t \to \infty } 0$. Moreover, if $b(t,N_t)=O\left(\frac{1}{t^\alpha}\right)$ for $0< \alpha \leq 1$, then also $\pazocal{V}(x_t(\cdot))=O\left(\frac{1}{t^\alpha}\right)$ as $t \to \infty$. \end{theorem}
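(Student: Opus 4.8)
The plan is to route both statements through the variance functional $\pazocal{V}(x_t(\cdot))$, for which I would derive a single linear differential inequality with an exponentially dissipative leading term; everything else is then elementary asymptotics of the resulting integral formula. Since $\psi$ is of Type I it is continuous, strictly positive, bounded and Lipschitz, so Theorem~\ref{thm: Well_posedness1} furnishes a classical solution $x_t(s)$ on every $D_T$, and one has the uniform a priori bound $\|x_t\|_{L^\infty}\le M:=\max(\|x_0\|_{L^\infty},X_B)$ (a convex-hull/maximum-principle argument: for any unit vector $v$ the quantity $\sup_s v\cdot x_t(s)$ cannot increase once it exceeds $X_B$, since at a maximizing agent $\dot x_t$ is an average of inward-pointing vectors, and incoming agents enter inside the $X_B$-ball). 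Consequently all pairwise distances $|x_t(s)-x_t(s')|$ lie in $[0,2M]$ and $\psi_m:=\min_{0\le r\le 2M}\psi(r)>0$.

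The first step is the dissipation inequality. Writing $N_t\pazocal{V}(x_t(\cdot))=\int_0^{N_t}|x_t(s)-m_1(t)|^2\,\diff s$ and differentiating by the Leibniz rule (legitimate since $N_t\in C^1$ and $x_t(\cdot)$ is classical): the moving-boundary term contributes $\dot N_t\,|X(t,N_t)-m_1(t)|^2$; the $\dot m_1$ cross term vanishes because $\int_0^{N_t}(x_t(s)-m_1(t))\,\diff s=0$; and $\int_0^{N_t}(x_t(s)-m_1(t))\cdot\dot x_t(s)\,\diff s=\int_0^{N_t}x_t(s)\cdot\dot x_t(s)\,\diff s$, which upon symmetrizing $s\leftrightarrow s'$ in the defining double integral of $\dot x_t$ equals $-\tfrac{1}{2N_t}\int_0^{N_t}\!\!\int_0^{N_t}\psi(|x_t(s')-x_t(s)|)\,|x_t(s')-x_t(s)|^2\,\diff s'\,\diff s$. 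Dividing by $N_t$, using $\dot N_t/N_t=b(t,N_t)$, the identity $\tfrac{1}{N_t^2}\int\!\!\int|x_t(s')-x_t(s)|^2\,\diff s'\,\diff s=2\pazocal{V}(x_t(\cdot))$, the coercivity $\psi\ge\psi_m$ on $[0,2M]$, the bound $|X(t,N_t)-m_1(t)|\le 2M$, and discarding the extra dissipation $-b(t,N_t)\pazocal{V}\le 0$, one obtains $\dot{\pazocal{V}}(x_t(\cdot))\le 4M^2\,b(t,N_t)-2\psi_m\,\pazocal{V}(x_t(\cdot))$. The integrating factor $e^{2\psi_m t}$ then yields the master estimate $\pazocal{V}(x_t(\cdot))\le \pazocal{V}(x_0(\cdot))\,e^{-2\psi_m t}+4M^2\int_0^t b(s,N_s)\,e^{-2\psi_m(t-s)}\,\diff s$.

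For part (i), $N_\infty<\infty$ is equivalent to $\int_0^\infty b(s,N_s)\,\diff s<\infty$, so $b(\cdot,N_\cdot)\in L^1\cap L^\infty$. Splitting the convolution integral in the master estimate at $t/2$, the portion over $[0,t/2]$ is $\le e^{-\psi_m t}\|b\|_{L^1}$ and the portion over $[t/2,t]$ is $\le\int_{t/2}^\infty b(s,N_s)\,\diff s$, both $\to 0$; hence $\pazocal{V}(x_t(\cdot))\to 0$. Since Theorem~\ref{thm: Convergence1}(i) gives $m_1(t)\to m_*$, Chebyshev's inequality yields, for $t$ large enough that $|m_1(t)-m_*|<\epsilon/2$, the bound $m(\{s\in[0,N_\infty):|x_t(s)-m_*|>\epsilon\})\le m(\{s:|x_t(s)-m_1(t)|>\epsilon/2\})\le \tfrac{4N_\infty}{\epsilon^2}\,\pazocal{V}(x_t(\cdot))\to 0$, which is the claimed convergence in Lebesgue measure.

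For part (ii) I would use the same master estimate with $b(t,N_t)\to 0$: splitting at $t/2$, the $[0,t/2]$ part is $\le\tfrac{t}{2}e^{-\psi_m t}\sup_{s\ge0}b(s,N_s)$ and the $[t/2,t]$ part is $\le\tfrac{1}{2\psi_m}\sup_{s\ge t/2}b(s,N_s)$, both $\to 0$, so $\pazocal{V}(x_t(\cdot))\to 0$. When moreover $b(t,N_t)=O(t^{-\alpha})$ with $0<\alpha\le 1$, the $[t/2,t]$ part is $O((1+t/2)^{-\alpha})=O(t^{-\alpha})$ while the $[0,t/2]$ part and the initial term are $O(t\,e^{-\psi_m t})=o(t^{-\alpha})$, giving $\pazocal{V}(x_t(\cdot))=O(t^{-\alpha})$. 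The main obstacle is the dissipation inequality itself: one must carefully track the Leibniz boundary term generated by the expanding domain $[0,N_t]$ together with the non-constancy of $m_1(t)$, and one needs the uniform $L^\infty$ bound on $x_t$ in order to convert the mere positivity of $\psi$ into the explicit coercivity constant $\psi_m$; past that point everything reduces to standard estimates on the convolution $\int_0^t b(s,N_s)e^{-2\psi_m(t-s)}\,\diff s$.
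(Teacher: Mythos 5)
Your proposal is correct, and for part (ii) it is essentially the paper's own argument: the paper likewise inserts the coercivity $D(x_t(\cdot))\geq \psi_*\,\pazocal{V}(x_t(\cdot))$ (coming from the uniform $L^\infty$ bound of Lemma \ref{lemma: Bound}, exactly your $\psi_m$) into Proposition \ref{proposition: V_deriv}, discards $-b\pazocal{V}$, and estimates the resulting convolution $\int_0^t b(s,N_s)\pazocal{M}_1(s)e^{-\psi_*(t-s)}\,\diff s$ via the consequence of Lemma \ref{lemma: C1'_condition}; your explicit split at $t/2$ is just that lemma unpacked, and it gives the $O(t^{-\alpha})$ rate in the same way. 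Where you genuinely diverge is part (i): the paper does not use the exponential dissipation there at all, but instead invokes Proposition \ref{proposition: Energy_bound} to get $\int_0^\infty D(x_t(\cdot))\,\diff t<\infty$, hence $\int_0^\infty\pazocal{V}(x_t(\cdot))\,\diff t<\infty$, and then uses the $L^\infty$ bound on $b$ to control $|\dot{\pazocal{V}}|$ and applies Barbalat's lemma to conclude $\pazocal{V}\to 0$, finishing with Chebyshev as you do. Your route instead reuses the master estimate with $b\in L^1$ (automatic from $N_\infty<\infty$), which is more quantitative, unifies the two parts under one inequality, and in fact makes the hypothesis $b\in L^\infty$ nearly superfluous for (i); the paper's Barbalat argument is softer but leans on that hypothesis. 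Both are valid; one small cosmetic point is that your Chebyshev set should really be taken over $[0,N_t)$ (where $x_t$ is defined), but since $m([N_t,N_\infty))\to 0$ this costs nothing.
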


To avoid the curse of dimensionality it is extremely useful to introduce a macroscopic description which is based on a probabilistic description of the states of the system. For this we introduce a measure $f_t(x)\in \pazocal{P}(\mathbb{R}^d)$ which represents the probability density of agents having state $x$ at time $t$.
This probability measure is the solution to the following transport equation
\begin{equation} \label{eq: kinet} \partial_t f_t(x) + \nabla \cdot (V[f_t](x) f_t(x))= h[f_t](x) , \end{equation}
which is paired with  an initial probability profile $f_0(x)$ for $x \in \mathbb{R}^d$. The interaction Kernel $V[f_t](x)$ and the source term $h[f_t](x)$ are defined by
\begin{equation} \label{eq: inter_kernel_source} V[f_t](x):=\int_{\mathbb{R}^d} \psi(|y-x|) (y-x) f_t(y) \, \diff y , \quad h[f_t](x):= b(t,N_t) \left( \delta_{X(t, N_t)}(x)-f_t(x) \right). \end{equation}
The convolution kernel term $V[f_t](x)$ in \eqref{eq: inter_kernel_source} is the standard transport term in the classical symmetric consensus model. On the other hand, the source term $h[f_t](x)$ has a rather simple representation in that it is proportional to the growth rate $b(t,N_t)$ (population growth/total population) and it also has an ``influx'' $\delta_{X(t, N_t)}(x)$ and an ``outflux'' $f_t(x)$ part. The influx $\delta_{X(t, N_t)}(x)$ is a Dirac delta mass centered around $X(t, N_t)$ and the outflux is a term that accommodates for the extra mass added to the system via the boundary.

We shall give a result on the uniqueness of solutions of \eqref{eq: kinet} by proving a stability estimate in the 1-Wasserstein metric topology. The importance
of this metric is that it metrizes the weak-$^*$ convergence on compact sets. For $p \in [1, +\infty)$ the
p-Wasserstein distance between two probability measures $\mu , \nu \in \pazocal{P}_p(\mathbb{R}^d)$ (space of probability measures with a finite $p$ moment) is defined as
\begin{equation*} W_p(\mu ,\nu )=\inf \limits_{\gamma \in \Gamma(\mu, \nu)}\left( \int |x-x'|^p \, \diff \gamma(x, x') \right)^{\frac{1}{p}} ,\end{equation*}
where $\Gamma(\mu, \nu)$ is the set of all couplings between $\mu$ and $\nu$, i.e. the set of all joint probability measures with marginals $\mu$ and $\nu$ s.t.
\begin{equation*} \mu(A)=\gamma(A \times \mathbb{R}^d), \qquad \nu(B)=\gamma(\mathbb{R}^d \times B), \quad A, B \subset \mathbb{R}^d .\end{equation*}
The Kantorovich-Rubinstein duality theorem (see e.g. \cite{Vil}) states that in the special case $p=1$ the 1-Wasserstein metric of two compactly supported measures $\mu , \nu$ is
\begin{equation} \label{eq: W_1} W_1(\mu ,\nu)= \sup \limits_{\|\nabla \phi\|_{L^{\infty}(\mathbb{R}^d)} \leq 1} \int \phi(x) \, \diff (\mu -\nu)(x) , \end{equation}
where the supremum is taken over all $\phi : \mathbb{R}^d \to \mathbb{R}$ with Lipschitz constant 1. The norm $\|\nabla \phi\|_{L^{\infty}(\mathbb{R}^d)}$ in \eqref{eq: W_1} is defined as $\|\nabla \phi\|_{L^{\infty}(\mathbb{R}^d)}:= \max \limits_{1 \leq j \leq d} \esssup |\partial_{x_j}\phi |$. It is well known that $\pazocal{P}_1(\mathbb{R}^d)$ endowed with the $W_1(\cdot, \cdot)$ distance is a complete metric space.

The following is the main result on the well-posedness of \eqref{eq: kinet}
\begin{theorem} \label{thm: Well_posedness2} Assume a Lipschitz interaction function $\psi(r)$ with Lipschitz constant $L_{\psi}>0$. Let also $T>0$ and $f_0(x) \in \pazocal{P}_1(\mathbb{R}^d)$ an initial profile with bounded support. Then there exists a unique measure-valued solution that satisfies \eqref{eq: kinet} in the weak sense. Furthermore, if $f_t$ and $\widetilde{f}_t$ are two measure-valued solutions with initial conditions $f_0$ and $\widetilde{f}_0$ respectively, then the following stability estimate holds
\begin{equation*} \sup \limits_{t \in [0,T]}W_1(f_t,\widetilde{f}_t) \leq C_T W_1(f_0,\widetilde{f}_0) , \end{equation*} where $C_T$ is a positive constant that depends on $L_\psi$, $T$, $X_B$, $\|b\|_{L^{\infty}([0,T))}$, and the supports of initial profiles $f_0$ and $\widetilde{f}_0$. \end{theorem}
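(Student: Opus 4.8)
The plan is to recast \eqref{eq: kinet} as a fixed-point equation for the flow it generates, and then close a Gronwall estimate in $W_1$ that yields existence, uniqueness and stability at once. First note that testing \eqref{eq: kinet} against $\phi\equiv 1$ gives $\frac{\diff}{\diff t}\int_{\mathbb{R}^d} f_t\,\diff x = b(t,N_t)\big(1-\int_{\mathbb{R}^d} f_t\,\diff x\big)$, so $f_0\in\pazocal{P}(\mathbb{R}^d)$ forces $f_t\in\pazocal{P}(\mathbb{R}^d)$ for all $t$, and together with the support bound below this keeps the solution in $\pazocal{P}_1(\mathbb{R}^d)$. Given a curve $\mu\in C([0,T];\pazocal{P}_1(\mathbb{R}^d))$, write $\Phi^t_s[\mu]$ for the flow of the nonautonomous field $V[\mu_t]$, which is well defined once $V[\mu_t]$ is Lipschitz in space. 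Since the source $b(t,N_t)\delta_{X(t,N_t)}$ and the absorption $-b(t,N_t)f$ are explicit, the unique weak solution of the \emph{linear} equation $\partial_t f+\nabla\cdot(V[\mu_t]f)=b(t,N_t)(\delta_{X(t,N_t)}-f)$ with datum $f_0$ is given by the Duhamel/pushforward formula
\begin{equation*} \mathcal{T}[\mu]_t:=e^{-\int_0^t b}\,\big(\Phi^t_0[\mu]\big)_\#f_0+\int_0^t b(\tau,N_\tau)\,e^{-\int_\tau^t b}\,\big(\Phi^t_\tau[\mu]\big)_\#\delta_{X(\tau,N_\tau)}\,\diff\tau \end{equation*}
(throughout, $\int_a^c b$ abbreviates $\int_a^c b(s,N_s)\,\diff s$); a fixed point of $\mathcal{T}$ is exactly a weak solution of \eqref{eq: kinet}, and the two terms carry masses $e^{-\int_0^t b}$ and $1-e^{-\int_0^t b}$, consistent with $\mathcal{T}[\mu]_t\in\pazocal{P}(\mathbb{R}^d)$.

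\textbf{A priori estimates.} Let $\mathrm{supp}\,f_0,\ \mathrm{supp}\,\widetilde f_0\subset B(0,R_0)$ and put $\rho_0=\max(R_0,X_B)$. Along characteristics $\frac{\diff}{\diff t}|z(t)|\le |V[\mu_t](z(t))|$, which is $\le\psi_M$ for \textbf{Type II} weights and $\le 2\psi_M\sup_{x\in\mathrm{supp}\,\mu_t}|x|$ for \textbf{Type I}; hence, restricting $\mathcal{T}$ to curves supported in $B(0,R(t))$ with $R(t)=\rho_0+\psi_M t$ (Type II) resp.\ $R(t)=\rho_0 e^{2\psi_M t}$ (Type I), the image stays in the same tube and on $[0,T]$ everything lives in a fixed ball $B(0,R_T)$. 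On $B(0,R_T)$ the map $y\mapsto\psi(|y-x|)(y-x)$ is bounded and Lipschitz with constants depending only on $L_\psi,\psi_M,R_T$, whence: (i) $V[\mu_t]$ is $L_V$-Lipschitz in $x$, so $|\Phi^t_s[\mu](x)-\Phi^t_s[\mu](x')|\le e^{L_V(t-s)}|x-x'|$; (ii) by Kantorovich--Rubinstein duality \eqref{eq: W_1}, $|V[\mu_t](x)-V[\nu_t](x)|\le L_V'\,W_1(\mu_t,\nu_t)$ for $\mu_t,\nu_t$ supported in $B(0,R_T)$; and (iii) feeding (i)--(ii) into $\frac{\diff}{\diff t}|\Phi^t_s[\mu](x)-\Phi^t_s[\nu](x)|$ and applying Gronwall yields $\sup_{x\in B(0,R_T)}|\Phi^t_s[\mu](x)-\Phi^t_s[\nu](x)|\le L_V'e^{L_V T}\int_s^t W_1(\mu_\tau,\nu_\tau)\,\diff\tau$. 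Here $L_V,L_V'$ depend only on $L_\psi,\psi_M,R_T$, and $R_T$ on $L_\psi,\psi_M,T,X_B$ and $R_0$.

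\textbf{Contraction and stability.} Using the elementary inequalities $W_1(S_\#\lambda,S_\#\lambda')\le\mathrm{Lip}(S)W_1(\lambda,\lambda')$ and $W_1(S_\#\lambda,R_\#\lambda)\le\|S-R\|_{L^\infty(\mathrm{supp}\,\lambda)}$, the convexity of $W_1$ (which passes to the integral convex combination in the Duhamel formula), and $W_1(\delta_a,\delta_b)=|a-b|$, comparing $\mathcal{T}[\mu]$ and $\mathcal{T}[\nu]$ with respective data $f_0,\widetilde f_0$ gives
\begin{equation*} W_1\big(\mathcal{T}[\mu]_t,\mathcal{T}[\nu]_t\big)\le e^{L_V T}W_1(f_0,\widetilde f_0)+C'\int_0^t W_1(\mu_\tau,\nu_\tau)\,\diff\tau , \end{equation*}
with $C'=C'(L_\psi,\psi_M,R_T)$. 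Applied along the Picard iterates $f^{(n+1)}=\mathcal{T}[f^{(n)}]$ (common datum $f_0$, so the first term drops) the right-hand side telescopes, so $(f^{(n)})$ is Cauchy in $C([0,T];\pazocal{P}_1(\mathbb{R}^d))$ and its limit is a fixed point, i.e.\ a weak solution supported in $B(0,R_T)$. Applied to two solutions $f_t=\mathcal{T}[f]_t$, $\widetilde f_t=\mathcal{T}[\widetilde f]_t$, Gronwall gives $\sup_{t\in[0,T]}W_1(f_t,\widetilde f_t)\le C_T W_1(f_0,\widetilde f_0)$ with $C_T=e^{L_V T}e^{C'T}$, which is the asserted estimate — with the claimed dependence, since $R_T$ depends on $L_\psi,T,X_B$ and the supports of $f_0,\widetilde f_0$ — and, for $f_0=\widetilde f_0$, also gives uniqueness.

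\textbf{Main obstacle.} Relative to the classical Dobrushin/mean-field argument for the symmetric consensus model, the genuinely non-standard points are (a) propagating a support bound that is \emph{uniform on $[0,T]$} — immediate for \textbf{Type II} but requiring the Gronwall-in-$R(t)$ estimate for \textbf{Type I}, where the velocity is only linearly (not globally) bounded in the spread of the measure — and (b) the linear-equation uniqueness needed so that \emph{every} weak solution, not merely the constructed one, obeys the Duhamel representation and hence enters the Gronwall comparison; this is where one uses that $V[f_t]$, being Lipschitz and bounded on $B(0,R_T)$, generates a bi-Lipschitz flow along which test functions can be transported. The remainder is the routine bookkeeping of flow-map Lipschitz/stability estimates against the contraction properties of $W_1$ in the presence of the Dirac source.
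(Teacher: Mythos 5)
Your proposal is correct, and the existence part coincides with the paper's: the paper also builds the solution from the characteristic flow of the frozen field $V[f^n_t]$, writing the semi-explicit Duhamel solution (its operator $\pazocal{K}$ in \eqref{eq: iteration} is exactly your $\mathcal{T}$ expressed in density form, with the Jacobian factor $e^{-\int \nabla\cdot V}$ encoding the pushforward) and closing a contraction in $\pazocal{W}_1(\mu,\nu)=\sup_{t\in[0,T]}W_1(\mu_t,\nu_t)$ for small $T$; your Picard--telescoping variant is an equivalent packaging. Where you genuinely diverge is the stability estimate. The paper proves it by duality: it writes the equation for $f_t-\widetilde f_t$, solves the backward dual transport equation $\partial_t\phi+V[f_t]\cdot\nabla\phi+b\phi=0$ with Lipschitz terminal datum, propagates the bound $\|\nabla\phi(t,\cdot)\|_{L^\infty}\le e^{C_T(T-t)}$, and then bounds the three resulting integrals before taking the supremum over test functions in the Kantorovich--Rubinstein formula \eqref{eq: W_1}. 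You instead run a Dobrushin-type argument directly on the pushforward representation, using Lipschitz stability of the flow maps, convexity of $W_1$, and $W_1(\delta_a,\delta_b)=|a-b|$ for the Dirac source. The trade-off is the one you yourself identify in point (b): your route requires knowing that \emph{every} weak solution satisfies the Duhamel formula, i.e.\ uniqueness for the linear continuity equation with a Lipschitz, bounded velocity field — standard, but it must be invoked — whereas the paper's duality computation applies formally to any pair of weak solutions (at the cost of justifying the pairing \eqref{eq: mu-phi} for measure-valued solutions). Two further minor differences: the paper gets the sharper uniform support bound $\pazocal{R}_*=\max\{\pazocal{R}_{in},X_B\}$ from a convexity/tangency argument (Lemma \ref{lemma: Bound} and Proposition \ref{proposition: Kinet_Well-posedness}), while your growing radius $R(t)$ is cruder but suffices on $[0,T]$; and your constant $C_T$ ends up not needing $\|b\|_{L^\infty}$ because the $b$-dependent weights in the Duhamel formula are convex coefficients bounded by $1$, which is a slight quantitative improvement over the paper's estimate.
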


In the same lines as with Theorems \ref{thm: Convergence1} and \ref{thm: Convergence2} we prove the following result.

\begin{theorem} \label{thm: Convergence_kin} Assume a measure solution $f_t(x)$ of the kinetic problem \eqref{eq: kinet} with compactly supported initial data $f_0(x) \in \pazocal{P}_1(\mathbb{R}^d)$. The following convergence results hold:  \\
(i) Let $N_{\infty} < \infty$, and assume a communication weight $\psi(\cdot)$ which is compactly supported on $[0,1]$. Then the solution concentrates
around a finite set of $J$ points $x_i$ , for $i=1,\ldots, J$ s.t. $|x_i-x_j|\geq 1$ for $1 \leq i \neq j \leq J$, i.e we have that
\begin{equation*} f_t \rightharpoonup^* \sum \limits_{i=1}^J a_i \delta_{x_i} \quad \text{as} \quad t \to \infty , \quad \text{where} \end{equation*}
 \begin{equation*} \sum \limits_{i=1}^J a_i=1  \quad \text{and} \quad \sum \limits_{i=1}^J a_i x_i = M_*:=\frac{N_0}{N_{\infty}}\int_{\mathbb{R}^d} x f_0(x) \, \diff x+\frac{1}{N_{\infty}}\int_0^{\infty} X(s, N_s) \dot{N}_s \, \diff s .\end{equation*}
In the case $\psi(\cdot)>0$, the measure solution $f_t(x)$ concentrates around the value $M_*$ in the sense
\begin{equation*} f_t \rightharpoonup^* \delta_{M_*} \quad \text{as}\quad t \to \infty .\end{equation*}
\\
(ii) For $N_{\infty}= \infty$ assume a general communication weight $\psi(\cdot)$ (compactly supported or everywhere positive). Then if $(C1)$ holds we have $f_t \rightharpoonup^* \delta_{X(t, N_t)} \quad \text{as} \quad t \to \infty$.

If $\psi(\cdot)>0$, then condition $(C1)$ is not necessary for convergence. If $b(t, N_t) \to 0$ then any solution $f_t(x)$ concentrates around $M(t)=\frac{1}{N_t}\int_0^t X(s, N_s) \dot{N}_s \, \diff s$, i.e. $f_t \rightharpoonup^* \delta_{M(t)} \quad \text{as}\quad t \to \infty $.
\end{theorem}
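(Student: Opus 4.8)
The plan is to transfer the convergence results already established for the continuous (agent) model in Theorems \ref{thm: Convergence1} and \ref{thm: Convergence2} to the kinetic setting, exploiting the fact that the push-forward of the normalized Lebesgue measure on $[0,N_t]$ under the map $s \mapsto x_t(s)$ is precisely a weak solution of \eqref{eq: kinet}. More concretely, given a compactly supported $f_0 \in \pazocal{P}_1(\mathbb{R}^d)$, I would first realize $f_0$ as the law of $x_0(s)$ for a suitable initial profile $x_0 \in L^\infty([0,N_0];\mathbb{R}^d)$ (e.g. via the generalized inverse of the CDF, or simply by noting that any such $f_0$ arises this way after rescaling $[0,N_0]$ to have unit mass). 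By Theorem \ref{thm: Well_posedness1} there is a unique classical solution $x_t(s)$ on $D_T$ for every $T$, and a direct computation — testing \eqref{eq: kinet} against $\phi \in C_c^\infty$ and differentiating $\frac{1}{N_t}\int_0^{N_t}\phi(x_t(s))\,\diff s$ in time, using \eqref{eq: conses_symm}, \eqref{eq: growth}, and the boundary contribution $x_t(N_t)=X(t,N_t)$ — shows that $f_t := (x_t)_\# \big(\tfrac{1}{N_t}\mathcal{L}|_{[0,N_t]}\big)$ solves \eqref{eq: kinet} weakly; uniqueness from Theorem \ref{thm: Well_posedness2} then identifies this with the measure solution in the statement. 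With this identification in hand, every statement about $f_t$ reduces to a statement about $x_t(\cdot)$.

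For part (i), in the case $N_\infty<\infty$ with $\psi$ compactly supported on $[0,1]$: Theorem \ref{thm: Convergence1}(i) gives disjoint clusters $C_j\subset[0,N_\infty]$ with $\sum_j m(C_j)=N_\infty$, intra-cluster distances vanishing and inter-cluster $\liminf\geq 1$. Setting $a_j := m(C_j)/N_\infty$ and letting $x_j$ be the (a.e.) common limit of $x_t(s)$ on $C_j$ — which exists because the intra-cluster diameter vanishes and the cluster averages converge, the latter following from convergence of $m_1(t)\to m_*$ together with boundedness — one gets $f_t \rightharpoonup^* \sum_j a_j\delta_{x_j}$ by testing against continuous bounded functions and dominated convergence, while $\sum_j a_j x_j = m_* = M_*$ follows by passing to the limit in $m_1(t)=\frac{1}{N_t}\int_0^{N_t}x_t(s)\,\diff s$, noting $M_* = m_*$ since $\int_{\mathbb{R}^d} x f_0(x)\,\diff x = m_1(0)$. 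When $\psi>0$ everywhere, Theorem \ref{thm: Convergence2}(i) (global consensus in measure at $m_*$, using $b\in L^\infty$) collapses the clusters to a single point and gives $f_t\rightharpoonup^*\delta_{M_*}$ directly, since convergence in Lebesgue measure of $x_t(s)$ to $m_*$ is equivalent to weak-$^*$ convergence of the laws to $\delta_{m_*}$.

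For part (ii), $N_\infty=\infty$: if $(C1)$ holds, Theorem \ref{thm: Convergence1}(ii) gives $m_1(t)\to X(t,N_t)$ and $\pazocal{V}(x_t(\cdot))\to 0$; since $\pazocal{V}(x_t(\cdot)) = \int_{\mathbb{R}^d}|x-m_1(t)|^2 f_t(x)\,\diff x$ is the variance of $f_t$ and $m_1(t)$ its mean, vanishing variance plus $m_1(t)\to X(t,N_t)$ forces $W_2(f_t,\delta_{X(t,N_t)})\to 0$, hence $W_1$ convergence, hence $f_t\rightharpoonup^*\delta_{X(t,N_t)}$. If instead $\psi>0$ and $b(t,N_t)\to 0$, Theorem \ref{thm: Convergence2}(ii) gives $\pazocal{V}(x_t(\cdot))\to 0$ without $(C1)$; combined with the identity $m_1(t)=M(t)$ — which holds because $\frac{d}{dt}m_1(t) = b(t,N_t)(X(t,N_t)-m_1(t))$ by the moment computation in Section \ref{sec: Moments}, and $M(t)=\frac{1}{N_t}\int_0^t X(s,N_s)\dot N_s\,\diff s$ satisfies the same ODE with the same initial-data contribution after accounting for the $\frac{N_0}{N_t}m_1(0)\to 0$ term — we again conclude $f_t\rightharpoonup^*\delta_{M(t)}$. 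The main obstacle I anticipate is the first step: rigorously verifying that the push-forward measure is a weak solution of \eqref{eq: kinet}, in particular handling the moving boundary $s=N_t$ and the Dirac source $\delta_{X(t,N_t)}$ correctly in the distributional time-derivative, and confirming that the compact-support hypothesis on $f_0$ together with \eqref{eq: X(t)_cond} yields a uniform-in-$t$ bound on $\mathrm{supp}\,f_t$ (needed to apply Kantorovich–Rubinstein and to justify testing against merely continuous functions). Everything after that is soft measure-theoretic bookkeeping on top of the already-proven agent-level asymptotics.
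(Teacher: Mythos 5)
Your route is genuinely different from the paper's: you reduce everything to the agent-level results of Theorems \ref{thm: Convergence1}--\ref{thm: Convergence2} via the push-forward identification $f_t=(x_t)_\#\bigl(\tfrac{1}{N_t}\mathcal{L}|_{[0,N_t]}\bigr)$ (which the paper indeed justifies in Proposition \ref{proposition: kinet_derivation}, so your "main obstacle" is already handled there), whereas the paper argues directly on the kinetic equation using the macroscopic variance $\pazocal{V}(f_t(\cdot))$, Lemma \ref{lemma: D(f)_regularity} and the mass-concentration Lemma \ref{lemma: Concentration}. Parts of your plan are sound: the $N_\infty=\infty$ case and the $\psi>0$ case of (i) do transfer cleanly, since vanishing variance plus convergence of the mean controls $W_2$ and hence weak-$^*$ convergence to the moving Dirac.

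However, there is a genuine gap in part (i) for compactly supported $\psi$: you need the limit points $x_j$ to exist, and your justification does not deliver them. Theorem \ref{thm: Convergence1}(i) only gives vanishing intra-cluster diameters and inter-cluster separation; it does not say $x_t(s)$ converges, and a cluster whose diameter shrinks can still drift or oscillate as a whole. Your claim that "the cluster averages converge" because $m_1(t)\to m_*$ is false for $J\geq 2$: convergence of the global average constrains only the weighted sum $\sum_j a_j y_j(t)$, not each cluster center $y_j(t)=\frac{1}{m(C_j)}\int_{C_j}x_t(s)\,\diff s$ individually. Nor does the natural estimate $\int_0^\infty D(x_t(\cdot))\,\diff t<\infty$ directly give $\int_0^\infty|\dot{y}_j(t)|\,\diff t<\infty$, since Cauchy--Schwarz only yields square-integrability of $\dot{y}_j$. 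This is precisely the point where the paper does real work: it tests \eqref{eq: kinet} against a fixed $\phi\in C_c^\infty$, symmetrizes the transport term to get
\begin{equation*}
\Bigl|\int_{\mathbb{R}^d}\nabla\phi(x)\cdot V[f_t](x)f_t(x)\,\diff x\Bigr|
=\frac{1}{2}\Bigl|\iint \psi(|x-y|)\bigl(\nabla\phi(x)-\nabla\phi(y)\bigr)\cdot(y-x)f_t(x)f_t(y)\,\diff x\,\diff y\Bigr|\leq K\,D(f_t(\cdot)),
\end{equation*}
i.e.\ a bound by the \emph{first} power of the dissipation, which is integrable in time; this yields $\int_0^\infty|\dot{F}(t)|\,\diff t<\infty$ for $F(t)=\int\phi\,f_t$, hence existence of $f_\infty=\lim f_t$, after which Lemmas \ref{lemma: D(f)_regularity} and \ref{lemma: Concentration} identify $f_\infty$ as a sum of separated Diracs. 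You would need to import this (or an equivalent bounded-variation argument for the cluster centers) to close part (i); as written, that step fails.
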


\section{Well-Posedness of the microscopic model}
\label{sec: Well-Posedness}
The structure of the consensus model allows a uniform bound in time of the magnitude of a solution $x_t(s)$:

\begin{lemma} \label{lemma: Bound} Assume a classical solution $x_t(s)\in \mathbb{R}^d$ of \eqref{eq: conses_symm}, with initial conditions $x_0(s)$ for $s \in [0, N_0]$, and incoming opinion profile $X(t, N_t)$ bounded by some $X_B>0$. Let's also define \begin{equation*} \pazocal{R}:=\max \left\{\sup\limits_{0 \leq s \leq N_0} |x_0(s)|, \, X_B \right\} . \end{equation*} Then it follows that $\sup \limits_{0 \leq s \leq N_t}|x_t(s)|\leq \pazocal{R}$ for all $t>0$. \end{lemma}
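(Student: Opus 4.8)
The plan is to control the solution by testing the equation against a convex weight that vanishes on the closed ball $\overline{B}(0,\pazocal{R})\subset\R^d$, so that the agents entering at the moving boundary $s=N_t$, which carry the value $X(t,N_t)$ of magnitude $\leq X_B\leq\pazocal{R}$ by \eqref{eq: X(t)_cond}, contribute nothing to the estimate. Concretely, fix $\eta(x):=(|x|-\pazocal{R})_+^2$, which is nonnegative, convex and of class $C^1(\R^d)$, with $\nabla\eta$ vanishing on $\overline{B}(0,\pazocal{R})$, and equal to $0$ precisely on $\overline{B}(0,\pazocal{R})$; and set
\begin{equation*} \Phi(t):=\frac{1}{N_t}\int_0^{N_t}\eta\big(x_t(s)\big)\,\diff s . \end{equation*}
Since $x_0\in L^\infty([0,N_0];\R^d)$ with $|x_0(s)|\leq\pazocal{R}$, we have $\Phi(0)=0$; the goal is to prove $\Phi\equiv0$ on $[0,T]$, whence $|x_t(s)|\leq\pazocal{R}$ for a.e.\ $s$, and then to upgrade this to every $s$.

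First I would differentiate $\Phi$. By the Leibniz rule for the moving domain $[0,N_t]$,
\begin{equation*} \frac{\diff}{\diff t}\int_0^{N_t}\eta\big(x_t(s)\big)\,\diff s=\dot{N}_t\,\eta\big(x_t(N_t)\big)+\int_0^{N_t}\nabla\eta\big(x_t(s)\big)\cdot\dot{x}_t(s)\,\diff s , \end{equation*}
and the boundary term vanishes because $\eta(x_t(N_t))=\eta(X(t,N_t))=0$, since $|X(t,N_t)|\leq X_B\leq\pazocal{R}$; this is exactly the step where \eqref{eq: X(t)_cond} enters, and it is what lets the estimate survive the growth of the domain. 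In the remaining term I would substitute \eqref{eq: conses_symm} and symmetrize in the pair $(s,s')$, using that $r\mapsto\psi(r)$ is even and nonnegative, to rewrite it as
\begin{equation*} -\frac{1}{2N_t}\int_0^{N_t}\!\!\int_0^{N_t}\psi\big(|x_t(s')-x_t(s)|\big)\big(\nabla\eta(x_t(s))-\nabla\eta(x_t(s'))\big)\cdot\big(x_t(s)-x_t(s')\big)\,\diff s'\,\diff s\leq 0 , \end{equation*}
which is $\leq0$ by monotonicity of $\nabla\eta$ (i.e.\ convexity of $\eta$) together with $\psi\geq0$. Since moreover $\dot{N}_t=b(t,N_t)N_t\geq0$ and $\eta\geq0$, differentiating $\Phi(t)=N_t^{-1}\int_0^{N_t}\eta(x_t(s))\,\diff s$ gives $\Phi'(t)\leq0$. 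Hence $0\leq\Phi(t)\leq\Phi(0)=0$ on $[0,T]$, so $\eta(x_t(s))=0$ for a.e.\ $s$, i.e.\ $|x_t(s)|\leq\pazocal{R}$ for a.e.\ $s\in[0,N_t]$.

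It then remains to promote the almost-everywhere bound to a pointwise one. Fix $s$ and set $t_s:=0$ if $s\leq N_0$ and $t_s:=N^{-1}(s)$ otherwise; the initial, resp.\ boundary, data give $|x_{t_s}(s)|\leq\pazocal{R}$. For $t\geq t_s$, using \eqref{eq: conses_symm}, the inequality $x_t(s')\cdot x_t(s)\leq|x_t(s')|\,|x_t(s)|$, $\psi\geq0$, and the a.e.\ bound $|x_t(s')|\leq\pazocal{R}$ just obtained, one finds
\begin{equation*} \frac{\diff}{\diff t}|x_t(s)|^2\leq\frac{2}{N_t}\int_0^{N_t}\psi\big(|x_t(s')-x_t(s)|\big)\,|x_t(s)|\,\big(\pazocal{R}-|x_t(s)|\big)\,\diff s' , \end{equation*}
whose right-hand side is $\leq0$ as soon as $|x_t(s)|\geq\pazocal{R}$. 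A standard first-exit-time argument then forces $|x_t(s)|\leq\pazocal{R}$ for all $t\geq t_s$, and since $s$ and $T$ were arbitrary, $\sup_{0\leq s\leq N_t}|x_t(s)|\leq\pazocal{R}$ for every $t>0$.

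I expect the only genuinely delicate point to be the differentiation of $\Phi$ over the expanding interval: justifying the Leibniz formula (differentiation under the integral sign, dominating the integrand and its $t$-derivative on the compact set $D_T$, which is licit for a classical solution) and correctly identifying the boundary contribution $\dot{N}_t\,\eta(x_t(N_t))$ so that \eqref{eq: X(t)_cond} can annihilate it. The remaining ingredients --- the symmetrization/convexity cancellation of the interaction term, which is the continuous analogue of the fact that extreme positions are non-increasing in symmetric consensus dynamics, and the final bootstrap --- are routine. A more hands-on alternative would be to run a single first-exit-time argument directly for $t\mapsto\esssup_{0\leq s\leq N_t}|x_t(s)|$, but the convex-functional computation is cleaner precisely because it never requires locating where the supremum is attained.
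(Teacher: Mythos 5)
Your proof is correct, but it takes a genuinely different route from the paper's. The paper argues pointwise: it assumes a first time $t^*$ and index $s^*$ at which $|x_{t^*}(s^*)|=\pazocal{R}$, normalizes the interaction kernel so that $\dot{x}_{t^*}(s^*)$ is the difference between a convex combination of points of $\overline{B}(0,\pazocal{R})$ and the boundary point $x_{t^*}(s^*)$, and concludes that the velocity points into the half-space containing the ball, so the ball is invariant. Your argument instead integrates: the convex weight $\eta(x)=(|x|-\pazocal{R})_+^2$ is propagated through the functional $\Phi$, the boundary condition \eqref{eq: X(t)_cond} kills the flux term created by the expanding domain, and the symmetrized interaction term is signed by monotonicity of $\nabla\eta$; a per-agent differential inequality then upgrades the a.e.\ bound to a pointwise one. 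What your approach buys is robustness in the continuum-of-agents setting: you never need the supremum $\sup_s|x_t(s)|$ to be attained at some $s^*$, nor a well-defined ``first touching time,'' both of which are delicate for an $L^\infty$ profile over $s\in[0,N_t)$ (the paper's proof is tacitly assuming attainment, and its kernel normalization is informal since $\frac{1}{N_t}\int\psi\,\diff s'$ need not equal $1$). What the paper's approach buys is brevity and a transparent geometric picture, and it extends verbatim to invariance of arbitrary convex sets containing the data, whereas your weight $\eta$ is tailored to the ball. The only point you should make fully rigorous is the Leibniz differentiation of $\Phi$ over the moving interval, which you correctly flag; it is exactly the content of the paper's Lemma \ref{lemma: Differ} applied to $G=\eta$, so you could simply invoke that lemma there.
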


\begin{proof}[Proof of Theorem \ref{thm: Well_posedness1}]

The proof relies on Banach contraction theorem. We begin by writing the equation in the form $x_t(s)=Kx_t(s)$, where the operator $K$ is described by the branch function \eqref{eq: Proper} for some $T>0$. We define the norm $\|x\|=\sup \limits_{(s,t)\in D_T}|x_t(s)|$ for functions defined on $D_T$.
Our aim is to show that $\|Kx-Ky\|<\| x-y\|$ for any two functions defined on $D_T$.

We have that
\begin{equation*} (s,t)\in D_{T_1}: \quad |Kx_t-Ky_t|(s)= \Bigg|\int_0^t \diff t' \frac{1}{N_{t'}} \int_0^{N_{t'}} \diff s' \, I_{t'}(s,s')\Bigg| \leq \int_0^t \diff t' \frac{1}{N_{t'}} \int_0^{N_{t'}} \diff s' \, |I_{t'}(s,s')| ,\end{equation*}

\begin{equation*} (s,t)\in D_{T_2}: \quad |Kx_t-Ky_t|(s)= \Bigg|\int_{N^{-1}(s)}^t \diff t' \frac{1}{N_{t'}} \int_0^{N_{t'}} \diff s' \, I_{t'}(s,s') \Bigg| \leq \int_{N^{-1}(s)}^t \diff t' \frac{1}{N_{t'}} \int_0^{N_{t'}} \diff s' \, |I_{t'}(s,s')|,\end{equation*}
where
\begin{equation*} I_{t}(s,s')=\psi(|x_t(s')-x_t(s)|)(x_t(s')-x_t(s))-\psi(|y_t(s')-y_t(s)|)(y_t(s')-y_t(s)) .\end{equation*}
This can be written as
\begin{align*} I_{t}(s,s')&=\psi(|x_t(s')-x_t(s)|)\left( (x_t(s')-y_t(s')) - (x_t(s)-y_t(s))\right) \\ &+ \left( \psi(|x_t(s')-x_t(s)|)-\psi(|y_t(s')-y_t(s)|\right) (y_t(s')-y_t(s)) . \end{align*}
Taking the Euclidean norm of $I(t)$ we get
\begin{align*} |I_{t}(s,s')| & \leq |(x_t(s')-y_t(s')) - (x_t(s)-y_t(s))| + |\psi(|x_t(s')-x_t(s)|)-\psi(|y_t(s')-y_t(s)|) \, |y_t(s')-y_t(s)| \\ & \leq |(x_t(s')-y_t(s')) - (x_t(s)-y_t(s))| + L_\psi \, |y_t(s')-y_t(s)| \, ||x_t(s')-x_t(s)|-|y_t(s')-y_t(s)|| \\ & \leq (1+ L_\psi |y_t(s')-y_t(s)|) \, |(x_t(s')-y_t(s')) - (x_t(s)-y_t(s))| \\ & \leq (1+ L_\psi |y_t(s')-y_t(s)|) ( |x_t(s')-y_t(s')|  + |x_t(s)-y_t(s)| ) .
\end{align*}
If for a moment we assume that any solution is bounded uniformly by $\pazocal{R}$ then we get
\begin{equation*} |I_{t}(s,s')|  \leq (1+ 2 L_\psi \pazocal{R}) ( |x_t(s')-y_t(s')|  + |x_t(s)-y_t(s)| ) .\end{equation*}
Working with this we get that for every pair $(t,s)\in D_{T_1}$ and $(t,s)\in D_{T_2}$ then
\begin{equation*} |Kx_t -Ky_t|(s) \leq 2T (1+2L_\psi \pazocal{R}) \|x-y \|\end{equation*}
and taking the supremum over $D_{T_1}\cup D_{T_2}$ we get
\begin{equation*} \|Kx -Ky \|\leq 2T (1+2L_\psi \pazocal{R}) \|x-y \|.\end{equation*}

Thus, choosing $\widetilde{T} \leq (2(1+2L_{\psi}\pazocal{R}))^{-1}$ ensures that $K$ is a contraction on $[0, \widetilde{T}]$. By the Banach contraction theorem there exists a unique solution $x_t \in C([0,\widetilde{T}]; L^{\infty}([0, N_t];\mathbb{R}^d))$. Given this local solution, we now take the values $x_{\widetilde{T}}(s)$ (for $0 \leq s \leq N_{\widetilde{T}}$) as initial conditions and we extend the local solution to $[0,2 \widetilde{T}]$. We repeat until we cover $[0,T]$. We also note that the integrand term of $K$ is continuous map from  $L^{\infty}([0, N_t])$ to $L^{\infty}([0, N_t])$ so $x_t \in C^1([0,\widetilde{T}]; L^{\infty}([0, N_t];\mathbb{R}^d))$.

\end{proof}

\section{Moments of the continuous model}
\label{sec: Moments}
Before we proceed with the definition of moments, we give the following lemma which is helpful for the computation of their evolution. The proof is an elementary exercise in chain differentiation and given in the Appendix
\begin{lemma} \label{lemma: Differ} We assume a population $N_t$ that grows according to \eqref{eq: growth} for some continuous growth rate $b(t, N_t)$. Let's also assume some function $G(t,x)$ that is $C^{1,1}(\mathbb{R}_+,\mathbb{R}^d)$. Finally, we assume that $x_t(s)$ is a solution to \eqref{eq: conses_symm}--\eqref{eq: X(t)_cond} with a continuous in $t$ profile for incoming opinions $X(t, N_t)$. Then, the functional $\pazocal{B}(t)$ defined by $\pazocal{B}(t):=\frac{1}{N_t}\int_0^{N_t} G(t,x_t(s)) \, \diff s$ is differentiable with \begin{equation*} \dot{\pazocal{B}}(t) = -b(t, N_t) \pazocal{B}(t) + b(t, N_t) G(t,X(t, N_t))+\frac{1}{N_t}\int_0^{N_t}(G_t +  G_x \cdot \dot{x}_t(s) )\, \diff s .\end{equation*}
\end{lemma}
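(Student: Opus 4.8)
The statement is a Reynolds–transport / differentiation-under-the-integral-sign identity for the moving domain $[0,N_t]$, and the plan is to prove it directly from difference quotients rather than by rescaling the spatial variable. The reason not to substitute $s=N_t\sigma$ (which would fix the domain at $[0,1]$) is that the resulting integrand would involve $\partial_s x_t(s)$, which need not exist for merely $L^\infty$ initial data. Instead, write $\pazocal{B}(t)=H(t)/N_t$ with $H(t):=\int_0^{N_t}G(t,x_t(s))\,\diff s$. Since $b(t,N_t)$ is continuous, \eqref{eq: growth} gives $N_t\in C^1$ with $\dot N_t=b(t,N_t)N_t$, so the quotient rule yields $\dot{\pazocal{B}}(t)=\dot H(t)/N_t-\dot N_t\,H(t)/N_t^2=\dot H(t)/N_t-b(t,N_t)\pazocal{B}(t)$, which already produces the first term. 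Everything then reduces to showing
\begin{equation*} \dot H(t)=\dot N_t\,G(t,X(t,N_t))+\int_0^{N_t}\left(G_t(t,x_t(s))+G_x(t,x_t(s))\cdot\dot x_t(s)\right)\diff s, \end{equation*}
after which one divides by $N_t$ and uses $\dot N_t/N_t=b(t,N_t)$ together with the boundary condition \eqref{eq: Mov_Bound_Cond}, i.e. $x_t(N_t)=X(t,N_t)$.

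To get this, I would form the difference quotient for $h>0$ (the case $h<0$ being symmetric) and split
\begin{equation*} \frac{H(t+h)-H(t)}{h}=\frac1h\int_{N_t}^{N_{t+h}}G(t+h,x_{t+h}(s))\,\diff s+\frac1h\int_0^{N_t}\left(G(t+h,x_{t+h}(s))-G(t,x_t(s))\right)\diff s. \end{equation*}
For the boundary term the key observation is that an agent $s\in(N_t,N_{t+h}]$ is born at time $N^{-1}(s)\in[t,t+h]$ with value $X(N^{-1}(s),s)$ and then, by \eqref{eq: conses_symm}, evolves for a time $\le h$ under a velocity field whose norm is bounded by $C:=2\pazocal{R}\psi_M$ (using Lemma \ref{lemma: Bound} to confine states to the ball of radius $\pazocal{R}$, and the global bound $\psi_M$ on $\psi$; for a Type II kernel $C$ is only smaller). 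Hence $\sup_{s\in(N_t,N_{t+h}]}|x_{t+h}(s)-X(t,N_t)|\le Ch+\omega(h)$, where $\omega(h)\to0$ is controlled by the continuity of $t\mapsto X(t,N_t)$ together with $N^{-1}(s)\to t$ as $s\to N_t^+$. Combined with the continuity of $G$ and with $(N_{t+h}-N_t)/h\to\dot N_t$, this forces the boundary term to converge to $G(t,X(t,N_t))\,\dot N_t$; note that when $b(t,N_t)=0$ on a flat stretch of $N_{\cdot}$ the interval of integration is empty and both sides vanish, consistent with the use of the generalized inverse.

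For the bulk term I would pass to the limit under the integral over the fixed finite interval $[0,N_t)$ by dominated convergence. For each fixed $s$, the mild representation \eqref{eq: Proper} (equivalently the closing remark in the proof of Theorem \ref{thm: Well_posedness1}) shows that $t\mapsto x_t(s)$ is $C^1$ with $\dot x_t(s)$ equal to the right-hand side of \eqref{eq: conses_symm}, so the integrand converges pointwise to $G_t(t,x_t(s))+G_x(t,x_t(s))\cdot\dot x_t(s)$. For domination, the mean value theorem, Lemma \ref{lemma: Bound}, and the uniform bound $|\dot x_{t'}(s)|\le C$ bound the difference quotient by $\|G_t\|_\infty+C\|G_x\|_\infty$ over the relevant compact set, uniformly in $s$ and in small $h$; this is exactly where the $C^{1,1}$ (indeed $C^1$) regularity of $G$ enters, making those sup-norms finite. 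Adding the two limits gives the displayed formula for $\dot H(t)$, and the lemma follows. The only genuinely delicate step is the boundary term: one must combine the boundedness of the consensus velocity field with the continuity of the incoming profile to see that the freshly added agents near $s=N_t$ carry values converging uniformly to $X(t,N_t)$; the bulk term is a routine differentiation under the integral sign.
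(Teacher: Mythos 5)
Your proposal is correct and follows essentially the same route as the paper's Appendix proof: the same split of the difference quotient into a boundary contribution over $(N_t,N_{t+h}]$ and a bulk contribution over $[0,N_t]$, with the boundary term converging to $\dot N_t\,G(t,X(t,N_t))$ and the bulk term handled by differentiation under the integral. You simply supply more detail than the paper (the explicit velocity bound $2\pazocal{R}\psi_M$ forcing freshly added agents' values to converge to $X(t,N_t)$, and the domination argument for the bulk term, where the paper invokes the mean value theorem and a Taylor expansion).
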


We define the first three (weighted) moments by
\begin{equation} \label{eq: Moments} m_0(t):=\frac{1}{N_t} \int_0^{N_t} 1 \, \diff s  , \quad m_1(t):=\frac{1}{N_t} \int_0^{N_t} x_t(s) \, \diff s , \quad  m_2(t):= \frac{1}{N_t} \int_0^{N_t} |x_t(s)|^2 \, \diff s . \end{equation}

\begin{proposition} \label{proposition: moment_Deriv} Let $x_t(s)$ a solution to \eqref{eq: conses_symm} with a boundary condition $X(t, N_t)$. Then the following expressions hold for the moments of $x_t(s)$ defined in \eqref{eq: Moments} (for $t \geq 0$)
\begin{equation} \label{eq: m_iDeriv}
\begin{aligned} (i) \quad \dot{m}_0(t)&=0
\\ (ii) \quad \dot{m}_1(t) &=-b(t, N_t) m_1(t)+b(t, N_t) X(t, N_t)
\\ (iii) \quad \dot{m}_2(t) &=-b(t, N_t) m_2(t)+b(t, N_t) |X(t, N_t)|^2 - D(x_t(\cdot)),
\end{aligned} \end{equation}
where the expression for the dissipation functional $D(x_t(\cdot))$ is
\begin{equation*}  D(x_t(\cdot)):=\frac{1}{N_t^2}  \int_0^{N_t} \int_0^{N_t} \psi(|x_t(s)-x_t(s')|)|x_t(s)-x_t(s')|^2 \,\diff s' \diff s . \end{equation*}
\end{proposition}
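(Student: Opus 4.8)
The plan is to apply the differentiation formula of Lemma~\ref{lemma: Differ} to three specific choices of the test function $G(t,x)$, namely $G\equiv 1$, $G(x)=x$ (componentwise) and $G(x)=|x|^2$, and then to simplify the resulting transport term $\frac{1}{N_t}\int_0^{N_t}(G_t+G_x\cdot\dot{x}_t(s))\,\diff s$ using the symmetry of the interaction kernel $\psi$. All three functions are independent of $t$, so $G_t=0$, and each is $C^{1,1}$ (for $G=|x|^2$ the gradient $2x$ is globally Lipschitz); moreover by Lemma~\ref{lemma: Bound} the solution $x_t(s)$ stays in a fixed ball, so the hypotheses of Lemma~\ref{lemma: Differ} are met and the double integrals appearing below are over a bounded square with bounded continuous integrand, which legitimizes the use of Fubini's theorem.

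For (i) I would simply note that $m_0(t)=\frac{1}{N_t}\int_0^{N_t}1\,\diff s\equiv 1$, so $\dot m_0(t)=0$; equivalently Lemma~\ref{lemma: Differ} with $G\equiv 1$ gives $\dot m_0=-b\,m_0+b=0$. For (ii) I would invoke Lemma~\ref{lemma: Differ} with $G(x)=x$ to obtain $\dot m_1(t)=-b(t,N_t)m_1(t)+b(t,N_t)X(t,N_t)+\frac{1}{N_t}\int_0^{N_t}\dot{x}_t(s)\,\diff s$, then substitute the right-hand side of \eqref{eq: conses_symm} for $\dot{x}_t(s)$. The leftover term becomes $\frac{1}{N_t^2}\int_0^{N_t}\!\int_0^{N_t}\psi(|x_t(s')-x_t(s)|)(x_t(s')-x_t(s))\,\diff s'\,\diff s$, which vanishes because the integrand is antisymmetric under $s\leftrightarrow s'$ (the scalar weight is symmetric while the vector factor $x_t(s')-x_t(s)$ is antisymmetric), yielding the claimed formula for $\dot m_1$.

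For (iii) I would apply Lemma~\ref{lemma: Differ} with $G(x)=|x|^2$, giving $\dot m_2(t)=-b(t,N_t)m_2(t)+b(t,N_t)|X(t,N_t)|^2+\frac{2}{N_t}\int_0^{N_t}x_t(s)\cdot\dot{x}_t(s)\,\diff s$. Inserting \eqref{eq: conses_symm} and symmetrizing the double integral in $s,s'$ (averaging the integrand with its image under $s\leftrightarrow s'$) leaves, up to the common symmetric weight $\psi(|x_t(s)-x_t(s')|)$, the bracket $x_t(s)\cdot(x_t(s')-x_t(s))+x_t(s')\cdot(x_t(s)-x_t(s'))=2\,x_t(s)\cdot x_t(s')-|x_t(s)|^2-|x_t(s')|^2=-|x_t(s)-x_t(s')|^2$. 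Hence this term equals $-\frac{1}{N_t^2}\int_0^{N_t}\!\int_0^{N_t}\psi(|x_t(s)-x_t(s')|)|x_t(s)-x_t(s')|^2\,\diff s'\,\diff s=-D(x_t(\cdot))$, which is exactly the asserted identity.

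I expect the computation to be essentially mechanical once Lemma~\ref{lemma: Differ} is in hand; the only delicate points will be (a) checking that $G=|x|^2$ has the $C^{1,1}$ regularity demanded by that lemma, which is handled by the uniform bound of Lemma~\ref{lemma: Bound} (one may work on a fixed ball), and (b) justifying the interchange of the $s$- and $s'$-integrations in the symmetrization step, which is immediate since the integrand is continuous and bounded on $[0,N_t]^2$. I do not anticipate a genuine obstacle beyond these routine verifications.
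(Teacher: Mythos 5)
Your proposal is correct and follows essentially the same route as the paper: the paper also derives (i) from $m_0\equiv 1$, and obtains (ii) and (iii) by applying Lemma~\ref{lemma: Differ} together with the two symmetry identities $\frac{1}{N_t}\int_0^{N_t}\dot{x}_t(s)\,\diff s=0$ and $\frac{1}{N_t}\int_0^{N_t}\frac{d}{dt}|x_t(s)|^2\,\diff s=-D(x_t(\cdot))$, which are exactly the antisymmetrization and symmetrization computations you carry out explicitly. Your version merely spells out the algebra and the routine justifications (Fubini, the $C^{1,1}$ check on a bounded ball) that the paper leaves implicit.
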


\begin{proof} The first equation in \eqref{eq: m_iDeriv} is trivial since $m_0(t)=1$ for all $t \geq 0$.
Also, the symmetry in the communication interaction $\psi(|x(s')-x(s)|)$ w.r.t $s$ and $s'$ implies the identities
\begin{align} \label{eq: Symm} \frac{1}{N_t} \int_0^{N_t} \dot{x}_t(s) \, \diff s=0 , \qquad \frac{1}{N_t} \int_0^{N_t} \frac{d}{dt}|x_t(s)|^2 \, \diff s=-D(x_t(\cdot)) .\end{align} Consequently $\dot{m}_1(t)$ and $\dot{m}_2(t)$
follow from a direct application of Lemma \ref{lemma: Differ} and \eqref{eq: Symm}.
\end{proof}

We note that in contrast to the classical symmetric consensus model (where we have conservation of the average opinion) here the average $m_1(t)$ depends both on the growth rate $b(t,N_t)$ as well as the profile for incoming opinions $X(t, N_t)$. In the corollary that follows we solve w.r.t the first moment $m_1(t)$ and show its asymptotic behavior. Consequently we show why in the case  $N_\infty=\infty$ condition $(C1)$ is equivalent to $m_1(t) \xrightarrow{t \to \infty} X(t, N_t)$.

\begin{corollary} \label{corollary: m_1}
Let $x_t(s)$ for $s \in [0,N_t)$ be a solution to the symmetric consensus model \eqref{eq: conses_symm} with boundary data $X(t, N_t)$. When $N_\infty=\infty$ we have that
\begin{equation} \label{eq: C1} m_1(t) \xrightarrow{ t \to \infty } \frac{1}{N_t}\int_0^t X(s, N_s) \dot{N}_s\, \diff s  .\end{equation}
Hence, $(C1)$ is necessary and sufficient for $m_1(t) \xrightarrow{ t \to \infty } X(t, N_t)$. Furthermore, if we assume that $X(t, N_t)$ is differentiable in time we have
\begin{equation} \label{eq: C2} \left|m_1(t)-X(t, N_t)+ \frac{1}{N_t}\int_0^t \dot{X}(s, N_s)N_s \, \diff s \right| = |m_1(0)-X(0, N_0)| \, \frac{N_0}{N_t} .\end{equation}
When $N_\infty<\infty$ then
\begin{equation} \label{eq: m_1Finite} m_1(t) \xrightarrow{ t \to \infty } \frac{N_0}{N_\infty}m_1(0)+\frac{1}{N_{\infty}}\int_0^{\infty} X(s, N_s) \dot{N}_s \, \diff s.\end{equation}
\end{corollary}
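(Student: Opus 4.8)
The plan is to solve explicitly the linear scalar ODE for $m_1$ furnished by Proposition \ref{proposition: moment_Deriv}(ii), and then to read off the three assertions either as $t\to\infty$ limits or by elementary algebra. Rewriting that equation as $\dot{m}_1(t)+b(t,N_t)\,m_1(t)=b(t,N_t)\,X(t,N_t)$, the integrating factor is $\mu(t):=\exp\!\big(\int_0^t b(s,N_s)\,\diff s\big)$; but the semi-explicit solution $N_t=e^{\int_0^t b(s,N_s)\,\diff s}\,N_0$ of \eqref{eq: growth} identifies $\mu(t)=N_t/N_0$. Hence $\frac{d}{dt}\big(N_t\,m_1(t)\big)=N_t\,b(t,N_t)\,X(t,N_t)=\dot{N}_t\,X(t,N_t)$, and integrating over $[0,t]$ yields the closed form
\begin{equation*} m_1(t)=\frac{N_0}{N_t}\,m_1(0)+\frac{1}{N_t}\int_0^t X(s,N_s)\,\dot{N}_s\,\diff s . \end{equation*}

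For \eqref{eq: C1}, when $N_\infty=\infty$ the term $\frac{N_0}{N_t}m_1(0)$ vanishes as $t\to\infty$, which is precisely the claimed limit, and the equivalence of $(C1)$ with $m_1(t)\to X(t,N_t)$ is then immediate from the definition of $(C1)$. For \eqref{eq: C2}, under the extra assumption that $t\mapsto X(t,N_t)$ is differentiable, I would integrate by parts, $\int_0^t X(s,N_s)\,\dot{N}_s\,\diff s=X(t,N_t)N_t-X(0,N_0)N_0-\int_0^t \dot{X}(s,N_s)N_s\,\diff s$, substitute into the closed form, and regroup to reach $m_1(t)-X(t,N_t)+\frac{1}{N_t}\int_0^t \dot{X}(s,N_s)N_s\,\diff s=\frac{N_0}{N_t}\big(m_1(0)-X(0,N_0)\big)$; taking Euclidean norms gives \eqref{eq: C2}. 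For \eqref{eq: m_1Finite}, when $N_\infty<\infty$ one first checks that $\int_0^\infty X(s,N_s)\,\dot{N}_s\,\diff s$ converges absolutely, since $|X(s,N_s)|\le X_B$ by \eqref{eq: X(t)_cond} and $\int_0^\infty \dot{N}_s\,\diff s=N_\infty-N_0<\infty$, and then one passes to the limit $t\to\infty$ in the closed form, using $N_t\to N_\infty$.

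No step here is genuinely hard: the argument reduces to solving a linear ODE with continuous coefficients and to routine limit manipulations. The only point requiring a little care is the passage to the limit in \eqref{eq: m_1Finite} — i.e. the absolute convergence of the improper integral and the fact that $\int_0^t\to\int_0^\infty$ — which follows from the uniform bound \eqref{eq: X(t)_cond} together with the finiteness of the total population increment $N_\infty-N_0$. A secondary, conceptually useful observation is that the integrating factor equals exactly $N_t/N_0$, which is what makes the formulas, and the precise shape of condition $(C1)$, transparent.
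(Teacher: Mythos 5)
Your proposal is correct and follows essentially the same route as the paper: both solve the linear ODE from Proposition \ref{proposition: moment_Deriv}(ii) with the integrating factor $e^{\int_0^t b(s,N_s)\,\diff s}=N_t/N_0$, integrate by parts for \eqref{eq: C2}, and pass to the limit for \eqref{eq: m_1Finite}. Your explicit check of the absolute convergence of $\int_0^\infty X(s,N_s)\dot{N}_s\,\diff s$ via \eqref{eq: X(t)_cond} is a small welcome addition where the paper only asserts existence.
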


\begin{proof}
We can solve the $m_1(t)$ first order equation in \eqref{eq: m_iDeriv} with the help of the integrating factor $e^{\int_0^t b(s, N_s) \, \diff s}$ and keeping in mind that both $b(t, N_t)$ and $X(t, N_t)$ are given, i.e.
\begin{equation} \label{eq: m_1Sol1}  m_1(t)=e^{-\int_0^t b(s, N_s) \, \diff s} m_1(0) +  \int_0^t b(s, N_s)X(s, N_s)e^{-\int_s^t b(s', N_{s'}) \, \diff s'} \, \diff s \end{equation}
The assumption on the differentiability of $X(t, N_t)$ allows integration by parts, i.e. \begin{equation} \label{eq: m_1Sol2} m_1(t)=e^{-\int_0^t b(s, N_s) \, \diff s} (m_1(0)-X(0, N_0)+X(t, N_t)-\int_0^t \dot{X}(s, N_s)e^{-\int_s^t b(s',N_{s'}) \, \diff s'} \, \diff s .  \end{equation}
Observing that $e^{-\int_0^t b(s, N_s) \, \diff s}=\frac{N_0}{N_t} , \quad  e^{-\int_s^t b(s', N_{s'}) \, \diff s'}=\frac{N_s}{N_t}$ and we get that \eqref{eq: C1}--\eqref{eq: C2}  follow  from \eqref{eq: m_1Sol1}--\eqref{eq: m_1Sol2}.

When $N_\infty < \infty$ we observe that we can write \begin{equation*}
\int_0^t b(s, N_s)X(s, N_s)e^{-\int_s^t b(s', N_{s'}) \, \diff s'} \, \diff s =\frac{1}{N_t}\int_0^t
X(s, N_s) \dot{N}_s \, \diff s ,\end{equation*}
and show by taking $t \to \infty$ that
\begin{equation*} \int_0^t b(s, N_s)X(s, N_s)e^{-\int_s^t b(s', N_{s'}) \, \diff s'} \, \diff s \xrightarrow{ t \to \infty } \frac{1}{N_{\infty}} \int_0^{\infty} X(s, N_s) \dot{N}_s \, \diff s .\end{equation*}
The last integral can be shown to exist due to $N_\infty < \infty$, and thus \eqref{eq: m_1Finite} follows.
\end{proof}

\textbf{Examples}: We give some examples of pairs of $X(t, N_t)$ and $N_t$ and check the validity of $(C1)$--$(C1')$.
\begin{enumerate}
\item \textbf{Constant $X(t, N_t)$}. Every constant $X(t, N_t)=X_c$ trivially satisfies $(C1)$ for every rate growth $b(t, N_t)$. In this case the solution of \eqref{eq: m_1Sol2} takes the form
\begin{equation*} m_1(t)=X_c +(m_1(0)-X_c) e^{-\int_0^t b(s, N_s) \, \diff s} .\end{equation*}
Trivially, when $N_\infty =\infty$ then $m_1(t) \xrightarrow{ t \to \infty } X_c$, whereas when $N_\infty < \infty$ then $m_1(t) \xrightarrow{ t \to \infty } \frac{N_0}{N_\infty} m_1(0)+\left( 1 - \frac{N_0}{N_\infty} \right)X_c$.

If $X(t, N_t)$ is eventually constant, i.e. $X(t, N_t)=X_c$ for $t \geq T_0$ (for some $T_0>0$)  then  $(C1)$ holds when $N_\infty = \infty$. When $N_\infty < \infty$, then
\begin{equation*} \frac{1}{N_t}\int_0^t \dot{X}(s, N_s) N_s \, \diff s  \xrightarrow{ t \to \infty } \frac{1}{N_\infty} \int_0^{T_0} \dot{X}(s, N_s) N_s \, \diff s .\end{equation*}
\item \textbf{Exponential population growth}. When the population grows exponentially, the average $m_1(t)$ may oscillate even for a Lipschitz (in time) $X(t, N_t)$. Consider e.g. $b(t, N_t)=1$ and $X(t, N_t)=\sin t$, then we have
\begin{equation*} \frac{1}{N_t}\int_0^t X(s, N_s)\dot{N}_s \, \diff s= \int_0^t \sin s e^{-(t-s)}\, \diff s =\frac{1}{2} (\sin t -\cos t)+\frac{1}{2}e^{-t} .\end{equation*}
It is evident that this integral has values that oscillate between some range as $t \to \infty$ and it clearly has no fixed limit. Thus, $m_1(t)\sim \frac{1}{2}(\sin t +\cos t)$ as $t \to \infty$.
\item \textbf{Extremely fast population growth}. Assume a Lipschitz $X(t, N_t)$, and hyper-exponential population growth $b(t, N_t)\geq t^\alpha$, for $\alpha>0$. We have
\begin{equation*} \Bigg| \frac{1}{N_t}\int_0^t \dot{X}(s, N_s) N_s \, \diff s \Bigg| \leq \frac{C}{1+\alpha} e^{-t^{1+\alpha}} \int_0^t e^{s^{1+\alpha}} \, \diff s .\end{equation*}
We remark here the following integral
\begin{equation*} F(p,x)=e^{-x^p}\int_0^x e^{t^p} \, dt , \qquad p \geq 1 ,\end{equation*}
which is the generalized Dawson function (Dawson integral is for $p=2$) and has the asymptotic expansion $F(p,x)\sim ax \sum \limits_{n\geq 0}c_n / x^{p(n+1)}$. This implies that when $p \geq 1$, then $F(p,x) \to 0$ as $x \to \infty$.
\item \textbf{Incoming opinion profile $X(N_t)$ is only a function of the population}.
In this case, if we assume the bound $|X_N(N_t)|\leq C N_t^{-1-\epsilon}$ for some $\epsilon>0$ ($\forall t >0$), then $(C1')$ holds. Note that in this case we don't have to assume anything on the $N_t$ growth. Indeed, we have that
\begin{equation*} \left| \frac{1}{N_t}\int_0^t \dot{X}(N_s) N_s \, \diff s \right|=\frac{1}{N_t} \int_0^t |X_N(N_s)| \dot{N}_s N_s \, \diff s \leq \frac{1}{N_t} \int_0^t \dot{N}_s N_s^{-\epsilon} \, \diff s =\frac{C}{1-\epsilon}N_t^{-\epsilon} \xrightarrow{ t \to \infty } 0. \end{equation*}
\end{enumerate}

The following easy lemma will prove essential later.
\begin{lemma} \label{lemma: C1'_condition} Assume a growing population $N_t$ with $N_\infty =\infty$. Let also a function $g(t)\in L^{\infty}([0,\infty))$ s.t. $g(t) \xrightarrow{ t \to \infty } 0$. Then it follows that \begin{equation*} \frac{1}{N_t}\int_0^t g(s) \dot{N}_s \, \diff s \xrightarrow{ t \to \infty } 0. \end{equation*} \end{lemma}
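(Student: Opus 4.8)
The plan is to run the continuous analogue of the Stolz--Ces\`aro (Toeplitz) averaging lemma. Since $b(t,N_t)\geq 0$ we have $\dot N_s\geq 0$, so $\dot N_s\,\diff s$ is a nonnegative measure on $[0,\infty)$ with $\int_a^b \dot N_s\,\diff s=N_b-N_a$, and the quantity in question is exactly the integral of $g$ against this measure over $[0,t]$, renormalized by (essentially) its total mass $N_t-N_0\leq N_t$. The point is that a function tending to $0$ has small averages against any such weighting once the contribution of the ``early'' part of the weight becomes negligible, which here is guaranteed by $N_\infty=\infty$.

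Concretely, fix $\varepsilon>0$ and split at a time $T_\varepsilon$ chosen so that $|g(s)|\leq \varepsilon/2$ for all $s\geq T_\varepsilon$ (possible since $g(t)\to 0$):
\begin{equation*}
\frac{1}{N_t}\int_0^t g(s)\dot N_s\,\diff s=\underbrace{\frac{1}{N_t}\int_0^{T_\varepsilon} g(s)\dot N_s\,\diff s}_{=:A(t)}+\underbrace{\frac{1}{N_t}\int_{T_\varepsilon}^t g(s)\dot N_s\,\diff s}_{=:B(t)}.
\end{equation*}
For the tail term I use $\dot N_s\geq 0$ together with the smallness of $g$ there: $|B(t)|\leq \tfrac{\varepsilon}{2}\,\tfrac{1}{N_t}\int_{T_\varepsilon}^t \dot N_s\,\diff s=\tfrac{\varepsilon}{2}\,\tfrac{N_t-N_{T_\varepsilon}}{N_t}\leq \tfrac{\varepsilon}{2}$ for every $t\geq T_\varepsilon$. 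For the head term I use $g\in L^\infty$: $|A(t)|\leq \|g\|_{L^\infty}\,\tfrac{N_{T_\varepsilon}-N_0}{N_t}$, and since $N_{T_\varepsilon}-N_0$ is a fixed finite constant while $N_t\uparrow N_\infty=\infty$, there is $T_\varepsilon'\geq T_\varepsilon$ with $|A(t)|\leq \varepsilon/2$ for all $t\geq T_\varepsilon'$. Hence the whole expression is $\leq\varepsilon$ for $t\geq T_\varepsilon'$, and since $\varepsilon$ was arbitrary the limit is $0$.

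There is no genuinely hard step here; the only things worth flagging are that the argument uses \emph{both} hypotheses in an essential way --- $N_\infty=\infty$ to kill the head term $A(t)$, and $g(t)\to 0$ to make the tail term $B(t)$ small --- and that nonnegativity of the growth rate $b$ (hence of $\dot N_s$) is precisely what lets us bound $B(t)$ by the increment $N_t-N_{T_\varepsilon}$ with no cancellation issues. One could alternatively invoke the generalized L'H\^opital rule for $t\mapsto\int_0^t g(s)\dot N_s\,\diff s$ over $t\mapsto N_t$, but the $\varepsilon$-splitting above needs nothing about $N_t$ beyond what \eqref{eq: growth} already provides.
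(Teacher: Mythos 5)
Your proof is correct and is essentially identical to the paper's own argument: the same $\varepsilon$-splitting of the integral at a time beyond which $|g|$ is small, the same bound on the tail via nonnegativity of $\dot N_s$, and the same use of $N_\infty=\infty$ to kill the head term. The only difference is cosmetic ($\varepsilon/2$ bookkeeping versus the paper's $2\delta$).
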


Direct consequences of this lemma are the following two simple facts :
\\ (i) A sufficient condition for the $(C1')$ condition is $\frac{\dot{X}(t, N_t)}{b(t, N_t)}\xrightarrow {t \to \infty}0$. Particularly, many of the examples mentioned above can be tested using this criterion.
\\ (ii) Another consequence is that if we consider an exponential growth with $N_t=e^{\lambda t}N_0$ and an $L^\infty$ function s.t. $g(t)\xrightarrow {t \to \infty}0$, then $\int_0^t g(s)e^{-\lambda(t-s)}\, \diff s \xrightarrow{ t \to \infty } 0$. As a matter of fact, if $g(t)=O\left(\frac{1}{t^\alpha}\right)$ for $0< \alpha $, it follows by a simple application of L'Hospitals rule that $\int_0^t g(s)e^{-\lambda(t-s)}\, \diff s=O\left(\frac{1}{t^\alpha}\right)$ as $t \to \infty$.

\section{Convergence and Clustering}
\label{sec: Convergence}
We now move to the study of the long time asymptotics of our model. The \textbf{total variance} of the opinion profile $x_t(\cdot)$ is the functional \begin{equation}\label{eq: Liapunov_X1} \pazocal{V}(x_t(\cdot)):=\frac{1}{2N_t^2}\int_0^{N_t} \int_0^{N_t} |x_t(s')-x_t(s)|^2 \, \diff s' \, \diff s . \end{equation}
For later use we also define the square distance between the average $m_1(t)$ and $X(t, N_t)$,
\begin{equation*} \pazocal{M}_1(t):=|m_1(t)-X(t, N_t) |^2 . \end{equation*} $\pazocal{M}_1(t)$ depends explicitly on $m_1(0)$ and the profiles of $X(t, N_t)$ and $N_t$, for $0 \leq t <\infty$. With the help of \eqref{eq: m_iDeriv} and assuming that $X(t, N_t)$ is differentiable we get
\begin{equation} \label{eq: Distance_X-m_1}  \dot{\pazocal{M}}_1(t) =- 2 b(t, N_t)\pazocal{M}_1(t) -2 \dot{X}(t, N_t) \cdot (m_1(t)-X(t, N_t)) . \end{equation}
We give the following result regarding the behavior of $\pazocal{V}(x_t(\cdot))$.
\begin{proposition} \label{proposition: V_deriv} Assume a smooth solution $x_t(\cdot)$ of problem \eqref{eq: conses_symm}
with a boundary profile $X(t, N_t)$. The time derivative of \eqref{eq: Liapunov_X1} along this solution is
\begin{equation} \label{eq: Liapunov_m1_Derivative} \dot{\pazocal{V}}(x_t(\cdot))=-b(t, N_t) \, \pazocal{V}(x_t(\cdot)) +b(t, N_t) \pazocal{M}_1(t) - D(x_t(\cdot)) . \end{equation}
\end{proposition}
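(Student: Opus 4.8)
The plan is to avoid differentiating the double integral \eqref{eq: Liapunov_X1} directly and instead reduce everything to the scalar moments $m_1(t)$ and $m_2(t)$, whose evolution is already furnished by Proposition \ref{proposition: moment_Deriv}. The key algebraic fact is the classical variance decomposition
\begin{equation*} \pazocal{V}(x_t(\cdot)) = m_2(t) - |m_1(t)|^2 . \end{equation*}
To get this, I would expand $|x_t(s')-x_t(s)|^2 = |x_t(s')|^2 + |x_t(s)|^2 - 2\,x_t(s')\cdot x_t(s)$ inside \eqref{eq: Liapunov_X1} and interchange the order of integration, which is legitimate since $x_t(\cdot)$ is bounded on $[0,N_t]$ by Lemma \ref{lemma: Bound} and the integrand is continuous. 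The two squared terms each reduce to $\tfrac12 m_2(t)$ once one of the two integrations collapses against the factor $N_t$, while the cross term factors as $-\frac{1}{N_t^2}\left|\int_0^{N_t} x_t(s)\,\diff s\right|^2 = -|m_1(t)|^2$.

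Next I would simply differentiate this identity in $t$. Since $m_1$ and $m_2$ are $C^1$ in time (this is precisely what Lemma \ref{lemma: Differ}, and hence Proposition \ref{proposition: moment_Deriv}, gives us, using only continuity of $X(t,N_t)$), the map $t\mapsto m_2(t)-|m_1(t)|^2$ is $C^1$ with
\begin{equation*} \dot{\pazocal{V}}(x_t(\cdot)) = \dot{m}_2(t) - 2\, m_1(t)\cdot \dot{m}_1(t) . \end{equation*}
Substituting the formulas from \eqref{eq: m_iDeriv}, namely $\dot{m}_1 = -b(t,N_t)\, m_1 + b(t,N_t)\, X(t,N_t)$ and $\dot{m}_2 = -b(t,N_t)\, m_2 + b(t,N_t)\,|X(t,N_t)|^2 - D(x_t(\cdot))$, yields
\begin{equation*} \dot{\pazocal{V}} = -b(t,N_t)\, m_2 + b(t,N_t)\,|X|^2 - D(x_t(\cdot)) + 2 b(t,N_t)\,|m_1|^2 - 2 b(t,N_t)\, m_1\cdot X . \end{equation*}

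Finally I would regroup the terms carrying the factor $b(t,N_t)$: the combination $-m_2 + |m_1|^2 = -\pazocal{V}(x_t(\cdot))$ reproduces the term $-b(t,N_t)\pazocal{V}(x_t(\cdot))$, while the remaining $|m_1|^2 - 2\, m_1\cdot X + |X|^2$ completes the square to $|m_1(t)-X(t,N_t)|^2 = \pazocal{M}_1(t)$; together with the leftover $-D(x_t(\cdot))$ this is exactly \eqref{eq: Liapunov_m1_Derivative}. There is no genuinely hard step here; the only points meriting a word of care are the Fubini interchange in the variance identity (immediate from boundedness of $x_t(\cdot)$) and the observation that—unlike formula \eqref{eq: Distance_X-m_1} for $\dot{\pazocal{M}}_1$—\emph{no} differentiability of $X(t,N_t)$ is required, since $\dot m_1$ and $\dot m_2$ in \eqref{eq: m_iDeriv} involve only $X$ itself. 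One could alternatively differentiate \eqref{eq: Liapunov_X1} head-on, but that would require a two-variable version of Lemma \ref{lemma: Differ} to handle both the $N_t^{-2}$ prefactor and the boundary contributions from the expanding square domain $[0,N_t]^2$, which is strictly more cumbersome, so I would not take that route.
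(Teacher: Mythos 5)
Your proof is correct, but it is organized differently from the paper's. The paper first rewrites \eqref{eq: Liapunov_X1} as the single integral $\frac{1}{N_t}\int_0^{N_t}|x_t(s)-m_1(t)|^2\,\diff s$ and then applies Lemma \ref{lemma: Differ} afresh with $G(t,x)=|x-m_1(t)|^2$; the boundary term of that lemma produces $b(t,N_t)\,\pazocal{M}_1(t)$ directly, and the leftover integral $\frac{2}{N_t}\int_0^{N_t}(\dot{x}_t(s)-\dot{m}_1(t))\cdot(x_t(s)-m_1(t))\,\diff s$ is identified with $-D(x_t(\cdot))$ via the symmetry identities \eqref{eq: Symm}. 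You instead use the decomposition $\pazocal{V}=m_2-|m_1|^2$, substitute the already-established moment ODEs \eqref{eq: m_iDeriv}, and complete the square; the $b$-terms regroup exactly as you say, and the $-D$ term is inherited from $\dot{m}_2$. The two arguments rest on the same two ingredients (Lemma \ref{lemma: Differ} for the growth/boundary contributions and the symmetry of $\psi$ for the dissipation), but your routing through Proposition \ref{proposition: moment_Deriv} has a small advantage: it sidesteps applying Lemma \ref{lemma: Differ} with a $G$ that depends on $t$ through the solution itself via $m_1(t)$, which strictly speaking presupposes that $m_1$ is $C^1$ before the lemma is invoked (a point the paper leaves implicit but which your order of operations resolves automatically). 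The paper's head-on computation, on the other hand, adapts immediately to variances taken about other centers, such as $\pazocal{V}_X$ in the subsequent remark, where your moment shortcut would need the extra cross term involving $\dot{X}$. Your observations that no differentiability of $X(t,N_t)$ is needed and that the Fubini interchange is justified by Lemma \ref{lemma: Bound} are both accurate.
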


\begin{proof}
We start by noting that the variance $\pazocal{V}(x_t(\cdot))$ can be written as a single integral with the help with the help of the average $m_1(t)$ as
\begin{equation*} \pazocal{V}(x_t(\cdot))= \frac{1}{N_t} \int_0^{N_t}
|x_t(s)-m_1(t)|^2 \, \diff s .\end{equation*}
Differentiating this with the help of Lemma \ref{lemma: Differ} yields
\begin{align*} \dot{\pazocal{V}}(x_t(\cdot)) = -b(t, N_t)\pazocal{V}(x_t(\cdot)) + b(t, N_t)\pazocal{M}_1(t) +\frac{2}{N_t} \int_0^{N_t} (\dot{x}_t(s)-\dot{m}_1(t)) \cdot (x_t(s)-m_1(t)) \, \diff s  .\end{align*}
The last integral is the dissipation term $-D(x_t(\cdot))$ as we can see from \begin{align*} \frac{2}{N_t} \int_0^{N_t} (\dot{x}_t(s)-\dot{m}_1(t)) \cdot (x_t(s)-m_1(t)) \, \diff s &\stackrel{\eqref{eq: Moments}}{=}\frac{2}{N_t} \left( \int_0^{N_t} \dot{x}_t(s)\cdot x_t(s) \, \diff s - m_1(t) \cdot \int_0^{N_t} \dot{x}_t(s) \, \diff s \right) \\ & \stackrel{\eqref{eq: Symm}}{=}\frac{1}{N_t}\int_0^{N_t} \frac{d}{dt}|x_t(s)|^2\, \diff s \stackrel{\eqref{eq: Symm}}{=}-D(x_t(\cdot)),
\end{align*}
which concludes the proof of \eqref{eq: Liapunov_m1_Derivative}.
\end{proof}

\begin{remark}
Another choice is the functional that measures the variance around $X(t, N_t)$, i.e.
\begin{equation} \label{eq: Liapunov_X} \pazocal{V}_{X}(x_t(\cdot)):= \frac{1}{N_t} \int_0^{N_t}
|x_t(s)-X(t, N_t)|^2 \, \diff s . \end{equation}
This functional is related to $\pazocal{V}(x_t(\cdot))$ in that $\pazocal{V}_{X}(x_t(\cdot))=\pazocal{V}(x_t(\cdot))+\pazocal{M}_1(t)$. Thus, using \eqref{eq: Distance_X-m_1} we have that
\begin{equation} \label{eq: Liapunov_X_Derivative}\dot{\pazocal{V}}_{X}(x_t(\cdot))=-b(t,N_t) \, \pazocal{V}_{X}(x_t(\cdot)) -2 \dot{X}(t, N_t) \cdot (m_1(t)-X(t, N_t)) -D(x_t(\cdot)) .\end{equation}
The two functionals $\pazocal{V}_{X}(x_t(\cdot))$ and $\pazocal{V}(x_t(\cdot))$ are by definition very closely related and can both offer insight on the long time asymptotics of the continuous model. Note in particular that when $X(t, N_t)=X_c$ then $\pazocal{V}_{X}(x_t(\cdot))$ is dissipative unlike $\pazocal{V}(x_t(\cdot))$.
\end{remark}
With the help of the functionals we introduced we are  in position to give the main convergence results. We treat the cases $N_\infty < \infty$ and $N_\infty = \infty$ separately.

\subsection{The case $N_\infty < \infty$}
In this case, regardless of how fast we approach the value of $N_\infty$, the estimate we get is practically similar to the estimate for a fixed population $N_\infty$ for very large times. To be more precise we have the following control of the dissipation $D(x_t(\cdot))$
\begin{proposition} \label{proposition: Energy_bound} If we assume a population growth profile with
$N_\infty < \infty$, we have that \begin{equation} \label{eq: Cluster} \int_0^\infty D(x_{t}(\cdot))\, \diff t < \infty . \end{equation}
\end{proposition}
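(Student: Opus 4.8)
The plan is to integrate the evolution identity \eqref{eq: Liapunov_m1_Derivative} for the total variance over $[0,T]$ and let $T\to\infty$, exploiting the fact that the ``bad'' term $b(t,N_t)\pazocal{M}_1(t)$ is controlled when $N_\infty<\infty$. Rearranging \eqref{eq: Liapunov_m1_Derivative} gives
\begin{equation*}
D(x_t(\cdot)) = -\dot{\pazocal{V}}(x_t(\cdot)) - b(t, N_t)\pazocal{V}(x_t(\cdot)) + b(t, N_t)\pazocal{M}_1(t),
\end{equation*}
so that
\begin{equation*}
\int_0^T D(x_t(\cdot))\,\diff t = \pazocal{V}(x_0(\cdot)) - \pazocal{V}(x_T(\cdot)) - \int_0^T b(t,N_t)\pazocal{V}(x_t(\cdot))\,\diff t + \int_0^T b(t,N_t)\pazocal{M}_1(t)\,\diff t.
\end{equation*}
Since $D\geq 0$, $\pazocal{V}\geq 0$ and the third term is nonpositive, we obtain the one-sided bound
\begin{equation*}
\int_0^T D(x_t(\cdot))\,\diff t \leq \pazocal{V}(x_0(\cdot)) + \int_0^T b(t,N_t)\pazocal{M}_1(t)\,\diff t,
\end{equation*}
so it suffices to show the right-hand integral stays bounded as $T\to\infty$.

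First I would record the a priori uniform bounds coming from Lemma \ref{lemma: Bound}: every classical solution satisfies $|x_t(s)|\leq\pazocal{R}$, hence $|m_1(t)|\leq\pazocal{R}$ and $|m_1(t)-X(t,N_t)|\leq 2\pazocal{R}$, which gives $\pazocal{M}_1(t)\leq 4\pazocal{R}^2$ uniformly in $t$. Then, using $b(t,N_t)\,\diff t = \dot N_t/N_t\,\diff t = \diff(\log N_t)$, I would bound
\begin{equation*}
\int_0^T b(t,N_t)\pazocal{M}_1(t)\,\diff t \leq 4\pazocal{R}^2 \int_0^T \frac{\dot N_t}{N_t}\,\diff t = 4\pazocal{R}^2 \log\frac{N_T}{N_0} \leq 4\pazocal{R}^2 \log\frac{N_\infty}{N_0},
\end{equation*}
which is finite precisely because $N_\infty<\infty$. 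Combining with the display above yields $\int_0^T D(x_t(\cdot))\,\diff t \leq \pazocal{V}(x_0(\cdot)) + 4\pazocal{R}^2\log(N_\infty/N_0)$ for every $T>0$; letting $T\to\infty$ and using that $t\mapsto\int_0^t D\,\diff s$ is nondecreasing (as $D\geq 0$) gives \eqref{eq: Cluster}.

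The only point requiring a little care is the interchange of limit and integral / the monotone passage to $T\to\infty$, but since $D(x_t(\cdot))\geq 0$ the partial integrals are monotone in $T$ and bounded above, so the improper integral converges by the monotone convergence theorem; no delicate estimate is needed there. I do not expect a genuine obstacle in this proof: the whole argument is an energy estimate, and the essential mechanism is simply that $\int_0^\infty b(t,N_t)\,\diff t = \log(N_\infty/N_0) < \infty$ when $N_\infty<\infty$, which tames the sole non-dissipative contribution $b(t,N_t)\pazocal{M}_1(t)$. One could alternatively phrase the same computation through $\pazocal{V}_X$ and \eqref{eq: Liapunov_X_Derivative}, but that introduces the term $\dot X(t,N_t)\cdot(m_1(t)-X(t,N_t))$ and would require differentiability and integrability assumptions on $X$; the argument via $\pazocal{V}$ above needs only continuity of $X$ together with the boundedness \eqref{eq: X(t)_cond}, which matches the hypotheses in force.
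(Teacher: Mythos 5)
Your argument is correct and is essentially the paper's own proof: both start from the identity \eqref{eq: Liapunov_m1_Derivative}, discard the nonpositive term $-b(t,N_t)\pazocal{V}(x_t(\cdot))$ to get $D(x_t(\cdot))\leq -\dot{\pazocal{V}}(x_t(\cdot))+b(t,N_t)\pazocal{M}_1(t)$, and conclude from the uniform bound on $\pazocal{M}_1$ together with $\int_0^\infty b(t,N_t)\,\diff t=\log(N_\infty/N_0)<\infty$. You merely make explicit the constants the paper leaves implicit (the bound $\pazocal{M}_1\leq 4\pazocal{R}^2$ via Lemma \ref{lemma: Bound} and the logarithmic formula for $\int b$), which is fine.
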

\begin{proof} It follows from \eqref{eq: Liapunov_m1_Derivative} that
\begin{equation*} D(x_{t}(\cdot)) \leq -\dot{\pazocal{V}}(x_t(\cdot)) + b(t,N_t) \pazocal{M}_1(t).\end{equation*}
We know that $\pazocal{M}_1(t) \leq C$ for some constant $C>0$, and also $\int_0^{\infty}b(t, N_t) \, \diff t < \infty$ so \eqref{eq: Cluster} follows.
\end{proof}
The condition \eqref{eq: Cluster} depending on the choice of the influence function $\psi(r)$ implies either the convergence of all opinions to a consensus value $x_\infty$ (for $\psi(r)>0$ for $r>0$) or the formation of different opinion clusters when $\psi(r)$ is compactly supported.

\subsubsection{Clustering}

We start with the case of a compactly supported influence $\psi(r)$ on $[0,1]$ which is the most interesting setting. We maintain that in the case of discrete agents the symmetric case is trivial, and the case of the Krause non-symmetric interaction function has been studied extensively in \cite{JabMot}. Here, we can take advantage of the symmetric interaction but the continuous space of agents requires more delicate analysis.

\begin{proposition} \label{proposition: clustering} Let $x_t(s)$ be the solution to the consensus model \eqref{eq: conses_symm} with any measurable boundary condition $X(t, N_t)$ and $N_\infty < \infty$. Assume also that the influence function $\psi(r)$ is compactly supported on $[0,1]$. Then for almost all pairs of agents $(s,s')\in [0,N_\infty) \times [0, N_\infty)$ we have \begin{equation} \label{eq: Cluster2} |x_t(s)-x_t(s')| \xrightarrow{t \to \infty} 0  \quad \text{or} \quad \liminf \limits_{t \to \infty} |x_t(s)-x_t(s')| \geq 1 . \end{equation}
\end{proposition}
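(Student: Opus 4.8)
The plan is to derive the pointwise dichotomy from the integrated dissipation bound of Proposition \ref{proposition: Energy_bound}, combined with a uniform-in-time Lipschitz control of the pair distance $d_t(s,s'):=|x_t(s)-x_t(s')|$. First I would record, via Lemma \ref{lemma: Bound}, that $|x_t(s)|\le\pazocal{R}$ for all $t$, and since a Type II kernel is bounded, $\psi_M:=\sup_{r\ge0}\psi(r)<\infty$; this gives $|\dot{x}_t(s)|\le 2\pazocal{R}\psi_M$, hence $t\mapsto d_t(s,s')$ is Lipschitz with a constant $L:=4\pazocal{R}\psi_M$ independent of the pair $(s,s')$. Next, because $N_t\le N_\infty<\infty$, one has $D(x_t(\cdot))\ge N_\infty^{-2}\int_{[0,N_t]^2}\psi(d_t(s,s'))\,d_t(s,s')^2\,\diff s'\,\diff s$, so Proposition \ref{proposition: Energy_bound} together with Tonelli's theorem (identifying, via the generalized inverse $N^{-1}$, that $(s,s')\in[0,N_t]^2$ precisely when $t\ge\max\{N^{-1}(s),N^{-1}(s')\}$, up to a null set) yields that for Lebesgue-almost every pair $(s,s')\in[0,N_\infty)^2$ the tail integral $\int_{t_0(s,s')}^{\infty}\psi(d_t(s,s'))\,d_t(s,s')^2\,\diff t$ is finite. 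It then suffices to establish \eqref{eq: Cluster2} for every such ``good'' pair.

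Fix a good pair and write $d_t=d_t(s,s')$; it is $C^1$ for $t$ large (Theorem \ref{thm: Well_posedness1}), $L$-Lipschitz, bounded, and has finite tail integral of $\psi(d_t)d_t^2$. Put $\alpha:=\limsup_{t\to\infty}d_t$, $\beta:=\liminf_{t\to\infty}d_t$, and argue by contradiction assuming \eqref{eq: Cluster2} fails, i.e. $\alpha>0$ and $\beta<1$. If $\alpha=\beta$ then necessarily $\alpha\in(0,1)$ and $d_t\to\alpha$, so eventually $d_t\in[\alpha/2,(1+\alpha)/2]\subset(0,1)$ and $\psi(d_t)d_t^2\ge(\alpha/2)^2\min_{[\alpha/2,(1+\alpha)/2]}\psi>0$, forcing divergence of the integral -- a contradiction. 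Otherwise $\beta<\min\{\alpha,1\}$; I would then fix $0<a<b<\min\{\alpha,1\}$ with $a>\beta$. Since $\limsup d_t>b$ and $\liminf d_t<a$, the sets $\{t:d_t>b\}$ and $\{t:d_t<a\}$ are both unbounded, so by continuity one extracts infinitely many pairwise disjoint intervals $[\sigma_n,\sigma_n']$ on each of which $d_t$ stays in $[a,b]$ and takes every value there; the Lipschitz bound gives $\sigma_n'-\sigma_n\ge(b-a)/L$. On each such interval $\psi(d_t)d_t^2\ge a^2\min_{[a,b]}\psi$, whence
\begin{equation*}\int^{\infty}\psi(d_t)\,d_t^2\,\diff t\ \ge\ \sum_{n}\frac{b-a}{L}\,a^2\,\min_{r\in[a,b]}\psi(r)\ =\ +\infty,\end{equation*}
contradicting finiteness. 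The positivity $\min_{[a,b]}\psi>0$ is where the structure of a Type II kernel is used: $\psi$ is strictly positive on $[0,1)$ (this is implicit in the Type II hypothesis and is precisely what makes the threshold $1$ in \eqref{eq: Cluster2} meaningful), hence bounded below on the compact set $[a,b]\subset[0,1)$.

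The main obstacle I anticipate is purely bookkeeping rather than conceptual: the dissipation $D(x_t(\cdot))$ integrates over the expanding domain $[0,N_t]^2$, so one must be careful, when applying Tonelli, to describe the region of $(s,s',t)$-space correctly through $N^{-1}$ and to discard the null set of pairs where $N^{-1}$ is ambiguous, so that finiteness of $\int_0^\infty D(x_t(\cdot))\,\diff t$ genuinely transfers to finiteness of the pairwise tail integrals. Everything else -- the uniform Lipschitz estimate, the extraction of disjoint crossing intervals via the intermediate value theorem, and the divergence estimate -- is routine, and I do not expect any monotonicity of $\psi$ to be needed beyond $\psi>0$ on $[0,1)$.
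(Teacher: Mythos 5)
Your proposal is correct and follows essentially the same route as the paper: both deduce from $\int_0^\infty D(x_t(\cdot))\,\diff t<\infty$ (Proposition \ref{proposition: Energy_bound}) via Tonelli that almost every pair $(s,s')$ has a finite tail integral of $\psi(d_t)d_t^2$, and then rule out intermediate behavior of $d_t(s,s')$. Your write-up in fact supplies the details the paper leaves implicit---the uniform Lipschitz bound on $t\mapsto d_t(s,s')$ and the crossing-interval estimate behind the paper's bare assertion that $m(A_\epsilon)=0$---and you are right to flag that strict positivity of $\psi$ on $[0,1)$ is an implicit part of the Type II hypothesis needed for the threshold $1$ in the dichotomy.
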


\begin{proof}
Let's introduce the set \begin{equation*} B = \{(s,s')\in [0,N_\infty)^2  : |x_t(s)-x_t(s')| \xrightarrow{t \to \infty} 0  \quad \text{or} \quad \liminf \limits_{t \to \infty} |x_t(s)-x_t(s')| \geq 1 \} . \end{equation*} Our aim is to show that the complement $B^c$ has Lebesgue measure zero on the real plane.

We fix some $\epsilon>0$ and introduce the set
\begin{equation*} A_\epsilon =\{ (s,s') : \exists \, \, \text{a real sequence} \, \, t_n \to \infty, \quad  \text{s.t.} \quad |x_{t_n}(s)-x_{t_n}(s')| \in [\epsilon ,1-\epsilon) \} .\end{equation*}
This is the set that contains all the pairs $(s,s')$ of agents for which there exists some sequence $t_n \to \infty$ (which may depend on the choice of $s$, $s'$) for which $x_{t_n}(s)$ and $x_{t_n}(s')$ are distanced by $\geq \epsilon$. We can show that $m(A_\epsilon)=0$ for every $\epsilon > 0$ using estimate \eqref{eq: Cluster}.

Finally, we can see from the definitions of $B$ and $A_\epsilon$ that they are related in that $B^c =\cup_{\epsilon>0} A_\epsilon$ . Since $m(A_\epsilon)=0$ for all $\epsilon>0$ we have that $m(B^c)=0$ and \eqref{eq: Cluster2} follows.
\end{proof}

This result implies the existence of a finite number of clusters $C_j$ (for $1 \leq j \leq J$). In fact, we can define an equivalence relation $\sim$ (see \cite{JabMot})
\begin{equation*} s \sim s' \qquad \text{iff} \qquad |x_t(s)-x_t(s')| \xrightarrow{t \to \infty} 0 .\end{equation*}

\begin{corollary} \label{corollary: clustering} Almost every $s \in [0,N_\infty)$ belongs to some cluster $C_j$ ( $1 \leq j \leq J$). In other words there is only a subset of agents with measure $0$ that are not in any cluster. \end{corollary}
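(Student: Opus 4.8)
The plan is to upgrade the ``almost every pair'' dichotomy of Proposition~\ref{proposition: clustering} to an ``almost every point'' statement, using Fubini's theorem, transitivity of the relation $\sim$, and a packing bound coming from the uniform boundedness of opinions (Lemma~\ref{lemma: Bound}).

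First I would fix a good set of agents. By Proposition~\ref{proposition: clustering} the set $B^c\subset[0,N_\infty)^2$ of pairs violating \eqref{eq: Cluster2} is Lebesgue-null in the plane, so Fubini produces a full-measure set $G\subseteq[0,N_\infty)$ such that for every $s\in G$ the slice $B^c_s:=\{s':(s,s')\in B^c\}$ is null; equivalently, for each $s\in G$ and a.e. $s'$ either $|x_t(s)-x_t(s')|\to0$ or $\liminf_{t\to\infty}|x_t(s)-x_t(s')|\geq1$. Next I record two structural facts: $\sim$ is genuinely an equivalence relation (transitivity follows from the triangle inequality), and by Lemma~\ref{lemma: Bound} every trajectory stays in $\bar B(0,\pazocal{R})$, so any family of points that are pairwise at distance $\geq\tfrac12$ inside that ball has cardinality at most some finite $J_{\max}=J_{\max}(\pazocal{R},d)$ by a standard volume-packing estimate.

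Then comes the extraction of clusters. Pick $s_1\in G$ and set $E_1:=\{s'\in G:\ s'\sim s_1\}$. If $m(G\setminus E_1)=0$ we stop with $J=1$; otherwise, since $\{s':(s_1,s')\in B\}$ is conull, the set $(G\setminus E_1)\cap\{s':(s_1,s')\in B\}$ still has positive measure and I choose $s_2$ in it, so that $s_2\not\sim s_1$ and hence, by the dichotomy applied to the pair $(s_1,s_2)$, $\liminf_t|x_t(s_1)-x_t(s_2)|\geq1$; moreover $E_2:=\{s'\in G:s'\sim s_2\}$ is disjoint from $E_1$ by transitivity. Iterating: given $s_1,\dots,s_k\in G$ with pairwise $\liminf$-separation $\geq1$ and pairwise disjoint classes $E_1,\dots,E_k$, if $m\bigl(G\setminus\bigcup_{i\leq k}E_i\bigr)>0$ I pick $s_{k+1}$ in $\bigl(G\setminus\bigcup_{i\leq k}E_i\bigr)\cap\bigcap_{i\leq k}\{s':(s_i,s')\in B\}$ (a set of the same positive measure, since each correction set is conull), which forces $\liminf_t|x_t(s_i)-x_t(s_{k+1})|\geq1$ for all $i\leq k$ and yields a new disjoint class $E_{k+1}$. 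For $t$ large the representatives $x_t(s_1),\dots,x_t(s_{k+1})$ are then pairwise $\tfrac12$-separated inside $\bar B(0,\pazocal{R})$, so by the packing bound the process terminates after at most $J_{\max}$ steps.

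When it terminates, at some step $J\leq J_{\max}$, we have $m\bigl(G\setminus\bigcup_{j=1}^J E_j\bigr)=0$; declaring $C_j$ to be the equivalence class of $s_j$ (these classes are pairwise distinct and $1$-separated in the $\liminf$ sense, hence are exactly the clusters of the model) we conclude that a.e. $s\in[0,N_\infty)$ lies in some $C_j$, and in particular $\sum_{j=1}^J m(C_j)=N_\infty$. The main obstacle is the measure-theoretic bookkeeping: one must ensure that each newly selected representative $s_{k+1}$ is ``good'' with respect to all previously selected ones, so that the dichotomy of Proposition~\ref{proposition: clustering} genuinely applies to those specific finitely many pairs rather than merely to almost every pair --- which is precisely why I intersect with the conull sets $\{s':(s_i,s')\in B\}$ at each stage. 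Once this is in place, transitivity of $\sim$ and the packing bound close the argument, in the spirit of the clustering analysis of \cite{JabMot}.
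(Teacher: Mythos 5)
Your argument is correct, and it follows the route the paper intends: the paper states this corollary without proof, merely introducing the equivalence relation $\sim$ after Proposition~\ref{proposition: clustering} and pointing to \cite{JabMot}. Your write-up supplies exactly the details that are left implicit there --- the Fubini slicing of the null set $B^c$, the transitivity of $\sim$ via the triangle inequality, the careful choice of each new representative inside the conull sets $\{s':(s_i,s')\in B\}$ so that the pairwise dichotomy applies to the specific pairs used, and the volume-packing bound in $\bar B(0,\pazocal{R})$ from Lemma~\ref{lemma: Bound} that forces the greedy extraction to terminate after finitely many clusters.
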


\subsection{Case $N_\infty = \infty$}
In this case, the major criterion is whether $m_1(t)$ converges to $X(t, N_t)$, i.e.

\begin{proposition} \label{proposition: C1_criterion} Assume a population growth profile with $N_\infty = \infty$. Assume also that $x_t(s)$ is a solution to \eqref{eq: conses_symm} with a  boundary condition $X(t, N_t)$ that satisfies $(C1)$. Then, it follows that $\pazocal{V}(x_t(\cdot)) \xrightarrow{ t \to \infty } 0$. In particular, when $X(t, N_t)=X_c$ is constant  we have the following estimate for $\pazocal{V}_{X} (x_t(\cdot))$ \begin{equation} \label{eq: V_Xconst} \pazocal{V}_{X} (x_t(\cdot)) \leq \pazocal{V}_{X}(x_0(\cdot))e^{-\int_0^t b(s, N_s)\, \diff s}.\end{equation} \end{proposition}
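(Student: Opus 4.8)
The plan is to analyze the differential inequality coming from Proposition~\ref{proposition: V_deriv}, namely
\[
\dot{\pazocal{V}}(x_t(\cdot)) = -b(t,N_t)\,\pazocal{V}(x_t(\cdot)) + b(t,N_t)\,\pazocal{M}_1(t) - D(x_t(\cdot)),
\]
and since $D(x_t(\cdot)) \geq 0$ (the influence kernel $\psi$ and the squared distances are nonnegative) we may drop it to obtain the scalar differential inequality $\dot{\pazocal{V}} \leq -b(t,N_t)\pazocal{V} + b(t,N_t)\pazocal{M}_1(t)$. The integrating factor for this is exactly $e^{\int_0^t b(s,N_s)\,\diff s} = N_t/N_0$, so after multiplying through and integrating I get
\[
\pazocal{V}(x_t(\cdot)) \leq \frac{N_0}{N_t}\pazocal{V}(x_0(\cdot)) + \frac{1}{N_t}\int_0^t b(s,N_s)\,N_s\,\pazocal{M}_1(s)\,\diff s = \frac{N_0}{N_t}\pazocal{V}(x_0(\cdot)) + \frac{1}{N_t}\int_0^t \pazocal{M}_1(s)\,\dot{N}_s\,\diff s,
\]
using $b(s,N_s)N_s = \dot{N}_s$. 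Since $N_\infty = \infty$, the first term vanishes as $t \to \infty$; the work is entirely in showing the second term vanishes.

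For the second term, the key observation is that condition $(C1)$, via Corollary~\ref{corollary: m_1} (equation \eqref{eq: C1}), gives $m_1(t) \to X(t,N_t)$, i.e. $\pazocal{M}_1(t) = |m_1(t) - X(t,N_t)|^2 \to 0$ as $t \to \infty$. Moreover $\pazocal{M}_1(t)$ is bounded: Lemma~\ref{lemma: Bound} gives $|m_1(t)| \leq \pazocal{R}$ and $|X(t,N_t)| \leq X_B \leq \pazocal{R}$, so $\pazocal{M}_1 \in L^\infty([0,\infty))$. Then I apply Lemma~\ref{lemma: C1'_condition} directly with $g(t) = \pazocal{M}_1(t)$: since $g \in L^\infty$ and $g(t) \to 0$ and $N_\infty = \infty$, we conclude $\frac{1}{N_t}\int_0^t \pazocal{M}_1(s)\dot{N}_s\,\diff s \to 0$. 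Combining the two pieces yields $\pazocal{V}(x_t(\cdot)) \to 0$.

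For the constant-$X$ refinement, I use the remark relating $\pazocal{V}_X$ and $\pazocal{V}$, specifically \eqref{eq: Liapunov_X_Derivative}: when $X(t,N_t) = X_c$ is constant, $\dot{X} = 0$, so \eqref{eq: Liapunov_X_Derivative} collapses to $\dot{\pazocal{V}}_X(x_t(\cdot)) = -b(t,N_t)\,\pazocal{V}_X(x_t(\cdot)) - D(x_t(\cdot)) \leq -b(t,N_t)\,\pazocal{V}_X(x_t(\cdot))$, again dropping the nonnegative dissipation. Gr\"onwall's inequality (or direct integration with the same integrating factor) then gives $\pazocal{V}_X(x_t(\cdot)) \leq \pazocal{V}_X(x_0(\cdot))\,e^{-\int_0^t b(s,N_s)\,\diff s}$, which is \eqref{eq: V_Xconst}. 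Note that for constant $X_c$, condition $(C1)$ holds automatically (it was checked in Example 1), so this is consistent with the first part, and since $e^{-\int_0^t b} = N_0/N_t \to 0$ one also recovers $\pazocal{V}_X \to 0$ hence $\pazocal{V} \to 0$.

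I do not anticipate a genuine obstacle here: everything reduces to the scalar comparison argument plus invoking Lemma~\ref{lemma: C1'_condition} and Corollary~\ref{corollary: m_1}, all of which are already established. The only point requiring a little care is justifying that $\pazocal{M}_1$ is bounded (so that Lemma~\ref{lemma: C1'_condition} applies with $g = \pazocal{M}_1$), which follows from the uniform bound in Lemma~\ref{lemma: Bound}, and confirming that $\pazocal{V}$ (and $\pazocal{V}_X$) are differentiable along the solution so that Proposition~\ref{proposition: V_deriv} (resp. the remark) can be applied — but this is exactly the content of those earlier statements for a smooth solution, whose existence is guaranteed by Theorem~\ref{thm: Well_posedness1}.
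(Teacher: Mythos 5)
Your proposal is correct and follows essentially the same route as the paper: drop the nonnegative dissipation $D(x_t(\cdot))$ in Proposition~\ref{proposition: V_deriv}, integrate with the factor $e^{\int_0^t b(s,N_s)\,\diff s}=N_t/N_0$, use Corollary~\ref{corollary: m_1} to get $\pazocal{M}_1(t)\to 0$ under $(C1)$, and invoke (the argument of) Lemma~\ref{lemma: C1'_condition} on $\frac{1}{N_t}\int_0^t \pazocal{M}_1(s)\dot N_s\,\diff s$; the constant-$X_c$ case via \eqref{eq: Liapunov_X_Derivative} is likewise identical. Your explicit verification that $\pazocal{M}_1\in L^\infty$ via Lemma~\ref{lemma: Bound} is a small detail the paper leaves implicit.
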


\begin{proof}
We start with the general case of a
\textbf{boundary condition $X(t, N_t)$ satisfying $(C1)$.}
Using \eqref{eq: Liapunov_m1_Derivative} and ignoring the $D(x_t(\cdot))$ term we have
\begin{equation*} \pazocal{V}(x_t(\cdot)) \leq \pazocal{V}(x_0(\cdot)) e^{-\int_0^t b(s, N_s)\, \diff s} +\int_0^t  \pazocal{M}_1(s) \frac{d}{\diff s}\left(e^{-\int_s^t b(s', N_{s'})\, \diff s'}\right) \, \diff s.\end{equation*}
We have already seen by Corollary 1 that $(C1)$ implies $\pazocal{M}_1(t) \xrightarrow{ t \to \infty } 0$. Since $N_\infty=\infty$, following the proof of Lemma \ref{lemma: C1'_condition} (see Appendix) we get that $ \frac{1}{N_t}\int_0^t  \pazocal{M}_1(s) \dot{N}_s \, \diff s \xrightarrow{ t \to \infty } 0$ which yields $\pazocal{V}(x_t(\cdot)) \xrightarrow{ t \to \infty } 0$.

For a \textbf{constant condition $X(t, N_t)=X_c$} we can work with either $\pazocal{V}(x_t(\cdot))$ or $\pazocal{V}_X(x_t(\cdot))$. We solve \eqref{eq: Distance_X-m_1} to get \begin{equation*} \pazocal{M}_1(t) =\pazocal{M}_1(0) e^{-2\int_0^t b(s, N_s) \, \diff s} .\end{equation*}
Hence, by Proposition 2 the inequality for $\pazocal{V}(x_t(\cdot))$ (if we discard $D(x_t(\cdot))$) becomes
\begin{equation*} \dot{\pazocal{V}}(x_t(\cdot)) +b(t, N_t) \, \pazocal{V}(x_t(\cdot)) \leq b(t, N_t)\pazocal{M}_1(0) e^{-2\int_0^t b(s, N_s) \, \diff s} ,\end{equation*}
and by Gronwall's we get the bound
\begin{align*} \pazocal{V}(x_t(\cdot))\leq \pazocal{V}(x_0(\cdot)) e^{-\int_0^t b(s, N_s) \, \diff s}+\pazocal{M}_1(0) \left(e^{-\int_0^t b(s, N_s) \, \diff s}-e^{-2\int_0^t b(s, N_s) \, \diff s} \right). \end{align*}

A more direct approach is to work with the functional $\pazocal{V}_X(x_t(\cdot))$ introduced in \eqref{eq: Liapunov_X}, since we have seen that it is dissipative when $\dot{X}(t, N_t)=0$. Then, with the help of \eqref{eq: Liapunov_X_Derivative} we get  \begin{equation} \label{eq: X(t)_const} \dot{\pazocal{V}}_{X}(x_t(\cdot))=-b(t, N_t) \, \pazocal{V}_{X}(x_t(\cdot)) -D(x_t(\cdot)) .\end{equation}
Once again, ignoring the term $D(x_t(\cdot))$ and integrating \eqref{eq: X(t)_const} in time we get \eqref{eq: V_Xconst}.

We remark that the structure of $\psi(r)$ plays no role when $X(t, N_t)$ is constant, and as one should expect by the fact that practically all the population enters with a specific opinion. The convergence rate depends on how fast $N_t \to \infty$.

\end{proof}

\begin{proof}[Proof of Theorem \ref{thm: Convergence1}] The proof is contained in the results of Propositions \ref{proposition: clustering}-\ref{proposition: C1_criterion}.
\end{proof}

\begin{proof}[Proof of Theorem \ref{thm: Convergence2}]

(i) Since we have an all-to-all interaction topology then \begin{equation} \label{eq: psi_bound}  \inf \limits_{s,s' \in [0,N_t]}\psi(|x_t(s)-x_t(s')|)= \psi_*> 0, \qquad \forall t \geq 0. \end{equation}
Now Proposition 3 implies that $\int_0^{\infty}\pazocal{V}(x_t(\cdot)) \, dt <\infty$. It remains to show that $\pazocal{V}(x_t(\cdot))$ is uniformly continuous in time. The latter follows from the $L^\infty$ bound of $b(t, N_t)$ which implies that $|\dot{\pazocal{V}}(x_t(\cdot))|$ is uniformly bounded for all $t>0$. With the help of Barbalat's lemma it follows that $\pazocal{V}(x_t(\cdot)) \to 0$ as $t \to \infty$. The convergence in measure follows trivially by Chebyshev's inequality.
\\
(ii) Applying Proposition 2 and using \eqref{eq: psi_bound} we have
\begin{equation*} \dot{\pazocal{V}}(x_t(\cdot))\leq -b(t, N_t)\pazocal{V}(x_t(\cdot))+b(t, N_t)\pazocal{M}_1(t) -\psi_*\pazocal{V}(x_t(\cdot)) . \end{equation*}
If we assume that $b(t,N_t)\xrightarrow{t \to 0} 0$ the main dissipation comes from the term $-\psi_*\pazocal{V}(x_t(\cdot))$ and we can discard $-b(t, N_t)\pazocal{V}(x_t(\cdot))$, so we get
\begin{equation*} \pazocal{V}(x_t(\cdot)) \leq e^{-\psi_*t}\pazocal{V}(x_0(\cdot)) +\int_0^t b(s, N_s) \pazocal{M}_1(s) e^{-\psi_*(t-s)} \, \diff s . \end{equation*}
The result follows from Lemma \ref{lemma: C1'_condition}.
\end{proof}

\section{Macroscopic (Kinetic) formulation of the consensus models}
\label{sec: Macroscopic}
\subsection{Formal derivation of the Macroscopic equation}

We now introduce the empirical measure that measures the probability density of particles at time $t$
\begin{equation} \label{eq: emp_meas} f_t(x):=\frac{1}{N_t} \int_0 ^{N_t} \delta_{x_t(s')}(x) \, \diff s' .\end{equation}
Let's comment a bit on the definition of the empirical measure $f_t(x)$. We know that in the classical kinetic theory treatment, when we deal with the discrete system of $N$ particles \eqref{eq: consensus}, the standard definition of the empirical measure is $\mu^N_t(x)=\frac{1}{N}\sum _{k=1}^{N} \delta_{x_k(t)}(x)$. The only consistent definition that gives a probability measure for the continuous model with growing population is \eqref{eq: emp_meas}.

\begin{proposition} \label{proposition: kinet_derivation}
For every solution $x_t(s)$ of \eqref{eq: conses_symm}, the empirical measure  \eqref{eq: emp_meas} associated with $x_t(s)$ satisfies the weak-formulation of \eqref{eq: kinet}, i.e. for all test function $\varphi(t, x)\in C_c^1([0,T) \times \mathbb{R}^d)$ with compact support on $[0,T) \times \mathbb{R}^d$ we have
\begin{equation} \label{eq: Weak_form} \begin{aligned}\int_0^T \int_{\mathbb{R}^d} \Big( \partial_t \varphi(t, x) +  V[f_t](x) \cdot \nabla \varphi(t, x) &- b(t, N_t) \varphi(t,x) \Big) f_t(x) \, \diff x \, \diff t  = \\ &-\int_0^T  b(t, N_t) \varphi(t , X(t, N_t)) \, \diff t  - \int_{\mathbb{R}^d} \varphi(0,x) f_0(x) \, \diff x . \end{aligned}\end{equation}
\end{proposition}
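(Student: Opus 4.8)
The plan is to verify the weak formulation \eqref{eq: Weak_form} directly by starting from the empirical measure \eqref{eq: emp_meas}, testing it against $\varphi(t,x)$, and differentiating in time using Lemma \ref{lemma: Differ}. Concretely, for a test function $\varphi \in C_c^1([0,T)\times\mathbb{R}^d)$, I would define the scalar function
\begin{equation*} \pazocal{B}(t) := \int_{\mathbb{R}^d}\varphi(t,x)\,f_t(x)\,\diff x = \frac{1}{N_t}\int_0^{N_t}\varphi(t,x_t(s))\,\diff s ,\end{equation*}
which is exactly of the form covered by Lemma \ref{lemma: Differ} with $G(t,x)=\varphi(t,x)$. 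Applying that lemma gives
\begin{equation*} \dot{\pazocal{B}}(t) = -b(t,N_t)\pazocal{B}(t) + b(t,N_t)\varphi(t,X(t,N_t)) + \frac{1}{N_t}\int_0^{N_t}\big(\partial_t\varphi(t,x_t(s)) + \nabla\varphi(t,x_t(s))\cdot\dot{x}_t(s)\big)\,\diff s .\end{equation*}
The first two terms on the right already match the $-b\,\varphi\,f_t$ and $-b\,\varphi(t,X(t,N_t))$ contributions in \eqref{eq: Weak_form}. The $\partial_t\varphi$ term is clearly $\int_{\mathbb{R}^d}\partial_t\varphi(t,x)f_t(x)\,\diff x$.

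The key step is to rewrite the transport term. Substituting \eqref{eq: conses_symm} for $\dot{x}_t(s)$,
\begin{equation*} \frac{1}{N_t}\int_0^{N_t}\nabla\varphi(t,x_t(s))\cdot\dot{x}_t(s)\,\diff s = \frac{1}{N_t}\int_0^{N_t}\nabla\varphi(t,x_t(s))\cdot\Big(\frac{1}{N_t}\int_0^{N_t}\psi(|x_t(s')-x_t(s)|)(x_t(s')-x_t(s))\,\diff s'\Big)\diff s ,\end{equation*}
and the inner integral is precisely $V[f_t](x_t(s))$ by the definition \eqref{eq: inter_kernel_source} once we recognize $f_t$ as the pushforward of Lebesgue measure on $[0,N_t)$ (normalized) under $s\mapsto x_t(s)$; that is, $\int_{\mathbb{R}^d}\psi(|y-x|)(y-x)f_t(y)\,\diff y = \frac{1}{N_t}\int_0^{N_t}\psi(|x_t(s')-x|)(x_t(s')-x)\,\diff s'$. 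Hence this term equals $\int_{\mathbb{R}^d}\nabla\varphi(t,x)\cdot V[f_t](x)\,f_t(x)\,\diff x$. Collecting everything,
\begin{equation*} \dot{\pazocal{B}}(t) = \int_{\mathbb{R}^d}\big(\partial_t\varphi + V[f_t]\cdot\nabla\varphi - b(t,N_t)\varphi\big)f_t(x)\,\diff x + b(t,N_t)\varphi(t,X(t,N_t)) .\end{equation*}
Finally I would integrate this identity over $t\in[0,T]$. By the compact support of $\varphi$ in time, $\pazocal{B}(T)=0$, so $\int_0^T\dot{\pazocal{B}}(t)\,\diff t = -\pazocal{B}(0) = -\int_{\mathbb{R}^d}\varphi(0,x)f_0(x)\,\diff x$. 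Rearranging yields exactly \eqref{eq: Weak_form}.

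The main obstacle is justifying the application of Lemma \ref{lemma: Differ}: that lemma requires $G\in C^{1,1}$, whereas a generic $C_c^1$ test function need not have Lipschitz gradient. I would handle this either by a density argument (approximate $\varphi$ by $C^2_c$ functions and pass to the limit, using the uniform bound $\sup_{s}|x_t(s)|\le\pazocal{R}$ from Lemma \ref{lemma: Bound} together with the boundedness of $b$ on $[0,T]$ to control all terms) or by simply re-deriving the needed differentiation formula directly for $C^1$ functions, since the proof of Lemma \ref{lemma: Differ} (chain rule plus the change-of-variables handling the moving boundary $N_t$) goes through as long as $\varphi$ is $C^1$ in both variables and $x_t(s)$ is $C^1$ in $t$, which is guaranteed by Theorem \ref{thm: Well_posedness1}. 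A secondary technical point is the interchange of the time integral with the $\diff t'$ integral implicit in the Fubini steps and making sure the boundary contribution is tracked correctly when $s>N_0$; here the decomposition $D_T=D_{T_1}\cup D_{T_2}$ and the fact that agents entering at time $N^{-1}(s)$ carry value $X(N^{-1}(s),s)$ is what produces the $b\,\varphi(t,X(t,N_t))$ source term, consistent with the $h[f_t]$ influx in \eqref{eq: inter_kernel_source}.
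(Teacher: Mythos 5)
Your proposal is correct and follows essentially the same route as the paper: the paper's proof computes $\frac{d}{dt}\left(\frac{1}{N_t}\int_0^{N_t}\varphi(t,x_t(s'))\,\diff s'\right)$ by hand (reproducing exactly the content of Lemma \ref{lemma: Differ} inline), identifies the $-b\varphi f_t$ and $b\varphi(t,X(t,N_t))$ terms with $\int h[f_t]\varphi\,\diff x$, rewrites the transport term via the pushforward identity for $V[f_t]$, and integrates in time. Your explicit attention to the $C^{1,1}$ versus $C^1$ regularity mismatch in invoking Lemma \ref{lemma: Differ} is a point the paper silently passes over, but it does not change the argument.
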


\begin{proof}
If we multiply $f_t(x)$ with some test functions $\varphi(t, x) \in C_c^1([0,T) , \mathbb{R}^d)$, integrate over $\mathbb{R}^d$ and differentiate in time we get
\begin{equation} \label{eq: Express1} \begin{aligned} & \frac{d}{dt} \int_{\mathbb{R}^d} f_t(x) \varphi(t, x) \, \diff x = \frac{d}{dt} \left( \frac{1}{N_t} \int_0^{N_t} \varphi(t, x_t(s')) \, \diff s' \right) \\ & =-\frac{\dot{N}_t}{N_t^2}\int_0^{N_t} \varphi(t, x_t(s')) \, \diff s' + \frac{\dot{N}_t}{N_t} \varphi(t, X(t, N_t)) +  \frac{1}{N_t} \int_0^{N_t} \frac{d}{dt} \varphi(t, x_t(s')) \, \diff s' \\ & = - \int_{\mathbb{R}^d} b(t, N_t) \varphi(t, x)f_t(x) \, \diff x + \int_{\mathbb{R}^d} b(t, N_t) \varphi(t, x) \delta_{X(t, N_t)}(x) \, \diff x + \frac{1}{N_t} \int_0^{N_t} \frac{d}{dt} \varphi(t, x_t(s')) \, \diff s' \\ &= \int_{\mathbb{R}^d} h[f_t](x) \varphi(t, x) \, \diff x + \frac{1}{N_t} \int_0^{N_t} \frac{d}{dt} \varphi(t, x_t(s')) \, \diff s'. \end{aligned} \end{equation}
The last integral term in \eqref{eq: Express1} is
\begin{equation} \label{eq: Express2} \begin{aligned} & \frac{1}{N_t} \int_0^{N_t} \frac{d}{dt} \varphi(t, x_t(s')) \, \diff s' =\frac{1}{N_t} \int_0^{N_t} \Big( \partial_t \varphi(t, x_t(s'))+\nabla \varphi(t, x_t(s')) \cdot \dot{x}_t(s') \Big) \, \diff s' \\ &= \int_{\mathbb{R}^d} \partial_t \varphi(t,x) f_t(x) \, \diff x + \frac{1}{N_t} \int_0^{N_t} \nabla \varphi(t, x_t(s')) \cdot \, \left( \frac{1}{N_t} \int_0^{N_t} \psi(|x_t(s'')-x_t(s')|)(x_t(s'')-x_t(s')) \, \diff s'' \right) \diff s' \\ & = \int_{\mathbb{R}^d} \partial_t \varphi(t,x) f_t(x) \, \diff x + \frac{1}{N_t}\int_0^{N_t} \nabla \varphi(t, x_t(s'))  \cdot \left( \int_{\mathbb{R}^d} f_t(y) \psi(|y-x_t(s')|) (y-x_t(s')) \, \diff y \right) \diff s' \\ &= \int_{\mathbb{R}^d} \partial_t \varphi(t,x) f_t(x) \, \diff x  + \int_{\mathbb{R}^d} f_t(x)  \nabla \varphi(t, x) \cdot \left( \int_{\mathbb{R}^d} f_t(y) \psi(|y-x|) (y-x) \, \diff y \right) \diff x \\ & =\int_{\mathbb{R}^d} \partial_t \varphi(t,x) f_t(x) \, \diff x +  \int_{\mathbb{R}^d} f_t(x)  V[f_t](x) \cdot \nabla \varphi(t,x) \, \diff x .\end{aligned} \end{equation}
The last step is to integrate \eqref{eq: Express1} from $0$ to $T$ and use \eqref{eq: Express2} to get \eqref{eq: Weak_form}.
\end{proof}

Inspired by the above derivation we may now give a formal definition for a weak solution to \eqref{eq: kinet}
\begin{definition} Assume an initial profile $f_0 \in \pazocal{P}_1(\mathbb{R}^d)$ with compact support and a measurable $X(t, N_t)$ for all $t>0$, and take any $T>0$. We say that a measurable $f_t \in C([0,T);\pazocal{P}_1(\mathbb{R}^d))$ is a weak solution of \eqref{eq: kinet}, iff for any $\varphi(t, x)\in C_c^{\infty}([0,T) \times \mathbb{R}^d)$ with compact support on $[0,T) \times \mathbb{R}^d$ we have that \eqref{eq: Weak_form} holds.
\end{definition}

\begin{proposition} \label{proposition: Kinet_Well-posedness}
Assume an initial non-negative profile $f_0(x)\in \pazocal{P}_1(\mathbb{R}^d)$ with compact support. We also assume a Lipschitz communication $\psi(r)$ and a measurable $X(t, N_t)$ that satisfies \eqref{eq: X(t)_cond}. It follows that there exists a non-negative distributional solution $f_t(x) \in C([0,T];\pazocal{P}_1(\mathbb{R}^d))$ to \eqref{eq: kinet} for any $T>0$. Furthermore, assuming $\mathrm{supp} \, f_0 \subset B(0, \pazocal{R}_{in})$, i.e. the initial profile is supported inside a ball of radius $\pazocal{R}_{in}$ then any solution $f_t(\cdot)$ satisfies \begin{equation*} \mathrm{supp} \, f_t \subset B(0, \pazocal{R}_*), \quad \text{where} \qquad \pazocal{R}_*= \max \{ \pazocal{R}_{in} , X_B \} .\end{equation*}
Here $B(0,R)$ is the standard notation for a ball, centered at $0$, with radius $R$.
\end{proposition}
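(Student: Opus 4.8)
The plan is to construct the solution directly as the empirical measure \eqref{eq: emp_meas} attached to a well-chosen classical solution of the particle system, so that the whole statement reduces to Theorem \ref{thm: Well_posedness1}, Lemma \ref{lemma: Bound} and Proposition \ref{proposition: kinet_derivation}. First I would realize the initial datum at the microscopic level: since $f_0\in\pazocal{P}_1(\mathbb{R}^d)$ with $\mathrm{supp}\,f_0\subset B(0,\pazocal{R}_{in})$ and $([0,N_0],\,\diff s/N_0)$ is an atomless standard probability space, there is a measurable map $x_0:[0,N_0]\to\overline{B(0,\pazocal{R}_{in})}$ such that $f_0$ equals the push-forward of the normalized Lebesgue measure under $x_0$; this is the classical fact that every Borel probability measure on $\mathbb{R}^d$ is the law of a random variable on $([0,1],\mathrm{Leb})$, obtained for instance via a Borel isomorphism $\mathbb{R}^d\cong\mathbb{R}$ together with the generalized inverse of the distribution function. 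In particular $x_0\in L^\infty([0,N_0];\mathbb{R}^d)$ with $\|x_0\|_{L^\infty}\le\pazocal{R}_{in}$, so Theorem \ref{thm: Well_posedness1} provides a unique classical solution $x_t(s)$ of \eqref{eq: conses_symm} on $D_T$ with initial profile $x_0$ and boundary profile $X(t,N_t)$.

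Next I would set $f_t$ to be the empirical measure \eqref{eq: emp_meas} of $x_t(\cdot)$. It is non-negative by construction, and since $m_0(t)\equiv 1$ it is a probability measure for each $t$. By Lemma \ref{lemma: Bound}, $|x_t(s)|\le\pazocal{R}:=\max\{\|x_0\|_{L^\infty},X_B\}\le\pazocal{R}_*$ for all $s,t$, hence $\mathrm{supp}\,f_t\subset\overline{B(0,\pazocal{R}_*)}$; this yields both the asserted support propagation and the finiteness of the first moment, so $f_t\in\pazocal{P}_1(\mathbb{R}^d)$. Proposition \ref{proposition: kinet_derivation} then states precisely that this $f_t$ satisfies the weak formulation \eqref{eq: Weak_form}, i.e. it is a distributional solution of \eqref{eq: kinet} with initial profile $f_0$. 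That the support bound holds for an \emph{arbitrary} weak solution, and not just this one, follows a posteriori from the uniqueness furnished by the stability estimate of Theorem \ref{thm: Well_posedness2}; alternatively it can be seen directly by a Gronwall argument on the radius of an enclosing ball, using that on $\partial B(0,R)$ one has $V[f_t](x)\cdot x\le 0$ as soon as $\mathrm{supp}\,f_t\cup\{X(t,N_t)\}\subset\overline{B(0,R)}$, because $\psi\ge 0$ and $|y|\le R$ there.

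It remains to verify the continuity $t\mapsto f_t$ in the $\pazocal{P}_1(\mathbb{R}^d)$ topology. On $[0,T]$ the rate $b(t,N_t)$ is continuous and $N_t$ is continuous and non-decreasing, hence both are bounded there; therefore $\dot N_t=b(t,N_t)N_t$ is bounded and $t\mapsto N_t$ is Lipschitz on $[0,T]$. Moreover $\psi$ is bounded and $|x_t(s)|\le\pazocal{R}_*$, so the right-hand side of \eqref{eq: conses_symm} is bounded uniformly on $D_T$ and $t\mapsto x_t(s)$ is Lipschitz with a constant independent of $s$. For $0\le t\le t'\le T$ I would bound $W_1(f_t,f_{t'})$ by inserting the auxiliary measure $g:=\frac{1}{N_t}\int_0^{N_t}\delta_{x_{t'}(s)}\,\diff s$: the diagonal coupling $s\mapsto(x_t(s),x_{t'}(s))$ gives $W_1(f_t,g)\le\frac{1}{N_t}\int_0^{N_t}|x_t(s)-x_{t'}(s)|\,\diff s\le C|t-t'|$, while $g$ and $f_{t'}$ differ only through the change of normalization $1/N_t$ versus $1/N_{t'}$ and the extra mass carried by $s\in(N_t,N_{t'}]$, all supported in $\overline{B(0,\pazocal{R}_*)}$; hence $|g-f_{t'}|(\mathbb{R}^d)\le 2(N_{t'}-N_t)/N_0\le C|t-t'|$ and $W_1(g,f_{t'})\le 2\pazocal{R}_*\,|g-f_{t'}|(\mathbb{R}^d)\le C|t-t'|$. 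Combining the two estimates gives $W_1(f_t,f_{t'})\le C|t-t'|$, so $f\in C([0,T];\pazocal{P}_1(\mathbb{R}^d))$.

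The conceptual content is thus inherited wholesale from the microscopic theory, and the only genuinely new bookkeeping is the last estimate, where one must keep track simultaneously of the transport of the already-present mass and of the mass created at the moving boundary. Since the two measures being compared live on parameter intervals $[0,N_t]$ and $[0,N_{t'}]$ of different length, splitting off the boundary-created part by means of the auxiliary measure $g$ is the step I expect to require the most care, although it presents no real obstacle.
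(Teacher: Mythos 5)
Your construction is correct but follows a genuinely different route from the paper. The paper works entirely at the kinetic level, following the methodology of \cite{CaCaRo}: it sets up the iteration $\partial_t f^{n+1}+\nabla\cdot(V[f^n]f^{n+1})=h[f^{n+1}]$, solves each step along the characteristics of $V[f^n]$ to obtain the explicit operator $\pazocal{K}$ in \eqref{eq: iteration}, and proves contractivity of $\pazocal{K}$ in $\sup_{t\in[0,T]}W_1$; non-negativity and the support bound are read off from the explicit formula. You instead build the solution Lagrangianly: realize $f_0$ as the push-forward of $\diff s/N_0$ under a measurable $x_0:[0,N_0]\to \overline{B(0,\pazocal{R}_{in})}$, invoke Theorem \ref{thm: Well_posedness1} for the microscopic solution, and let Proposition \ref{proposition: kinet_derivation} certify that its empirical measure \eqref{eq: emp_meas} is a weak solution, with Lemma \ref{lemma: Bound} giving the support bound for free. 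Your route is more economical given the paper's existing machinery and makes non-negativity and confinement automatic, at the cost of the (standard) measurable-realization step and the $W_1$-continuity estimate, which you carry out correctly via the auxiliary measure $g$; the paper's route is self-contained at the kinetic level and produces the contraction structure that is then recycled for the stability estimate of Theorem \ref{thm: Well_posedness2}. Two small caveats. First, deducing the support bound for an \emph{arbitrary} weak solution from the uniqueness in Theorem \ref{thm: Well_posedness2} is mildly circular, since the proof of that theorem already uses that any solution has bounded support (to control $\|\nabla V[f_t]\|_{L^\infty}$); your alternative direct argument --- $V[f_t](x)\cdot x\le 0$ on $\partial B(0,R)$ once $\mathrm{supp}\,f_t\cup\{X(t,N_t)\}\subset\overline{B(0,R)}$, plus Gr\"onwall along the characteristics of the Lipschitz field $V[f_t]$ --- is the one to keep. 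Second, Proposition \ref{proposition: kinet_derivation} differentiates $\frac{1}{N_t}\int_0^{N_t}\varphi(t,x_t(s'))\,\diff s'$ and extracts the boundary term $\varphi(t,X(t,N_t))$, which implicitly uses continuity of $X$ in $t$ rather than mere measurability; this is a gap the paper itself glosses over, but worth acknowledging if you rest your entire existence proof on that proposition.
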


\begin{proof} The existence part relies heavily on the nice properties of the underlying characteristic flow, and we shall only give a brief sketch of the proof. The main idea follows the methodology discussed in \cite{CaCaRo} for homogeneous alignment models.
We need to construct a sequence $\{ f_t^n \}_{n=1}^{\infty}$ of measurable functions that belong in $C([0,T]; \pazocal{P}_1(\mathbb{R}^d))$ (for some $T>0$) s.t. $f_t^n$ converges to a measurable solution in some appropriate metric space.

We may choose e.g. the iterative scheme
\begin{equation*} \partial_t f_t^{n+1}(x)+\nabla \cdot (V[f_t^n](x)f_t^{n+1}(x))=h[f_t^{n+1}] ,\end{equation*} which can be written differently as
\begin{equation} \label{eq: iteration_f^n} \partial_t f_t^{n+1}(x) +V[f_t^n](x)\cdot \nabla f_t^{n+1}(x)+ \left( b(t, N_t)+\nabla \cdot V[f_t^n](x) \right)f_t^{n+1}=b(t, N_t)\delta_{X(t, N_t)} .\end{equation}
Note that we have the following well-known estimates for $V[f_t](x)$ when $\psi(r)$ is Lipstchitz (see e.g. \cite{MuPe} for a similar calculation) \begin{equation*} |V[f_t](x)-V[f_t](x')| \lesssim |x-x'| , \qquad |V[f_t](x)-V[\widetilde{f}_t](x)| \lesssim W_1(f_t,\widetilde{f}_t) .\end{equation*} Here, when we write $A \lesssim B$, we mean $A \leq C B$ for a constant $C$ that depends only $d$, the Lipschitz constant $L_{\psi}$ of $\psi(\cdot)$, the maximum value $\psi_M$, and the support sets of $f_t$ and $\widetilde{f}_t$. Also $|\cdot|$ is any Euclidean norm in $\mathbb{R}^d$, due to their equivalence property.

Equation \eqref{eq: iteration_f^n} can be solved in $f_t^{n+1}(x)$ (provided we know the previous step $f_t^n(x)$) with the help of the characteristic system
\begin{equation*} \dot{T}^f_t(x)=V[f_t](x), \qquad T^f_0(x)=x .\end{equation*}  We only mention that for a ``fixed'' $f_t$, the solution to this characteristic system has some nice behavior for a Lipschitz regular $\psi(r)$ and provided that the support of $f_t(x)$ is bounded, i.e. we have
\begin{equation*} |T^f_t(x)-T^f_t(x')| \lesssim |x-x'|, \qquad |T^f_t(x)-T^{\widetilde{f}}_t(x)| \lesssim \int_0^t W_1(f_s, \widetilde{f}_s) \, \diff t .\end{equation*}

Going back to the iteration scheme, we can write the semi-explicit solution of $f_t^{n+1}(x)$ in the operator form $f_t^{n+1}=\pazocal{K}(f_t^n)$ where
\begin{equation} \label{eq: iteration} \begin{aligned}\pazocal{K}(f_t^n(x))&=f_0(T^{f^n}_{-t}(x))e^{-\int_0^t (b(s, N_s)+\nabla \cdot V[f_s^n](T^{f^n}_{s-t}(x))) \, \diff s} \\ &+ \int_0^t b(s, N_s) \delta_{X(s, N_s)} e^{-\int_s^t (b(s', N_{s'})+\nabla \cdot V[f_{s'}^n](T^{f^n}_{s'-t}(x))) \, \diff s'}\, \diff s . \end{aligned}\end{equation}
Based on this formulation of $\pazocal{K}(\cdot)$ we can prove the contractivity of the operator with respect to the metric distance \begin{equation*}\pazocal{W}_1(\mu,\nu)=\sup \limits_{t \in [0,T]}W_1(\mu_t,\nu_t)\end{equation*} for small enough $T>0$. The contractivity property implies that $f_t^n(x)$ is Cauchy and hence convergent to some limit $f_t^*(x)$ that is a fixed point of the equation. Note that starting from some $f_0 \geq 0$, non-negativity of each iteration $\{ f^n_t\}^{\infty}_{n=1}$ follows directly from \eqref{eq: iteration}, and hence the non-negativity of its limit. Meanwhile, although uniqueness follows automatically from the contraction property, we give an independent proof of uniqueness below to show how we derive the stability estimate.
\end{proof}

We now turn to the main stability result for proving uniqueness for our kinetic model
\begin{proof} [Proof of theorem \ref{thm: Well_posedness2}]

We start by writing the equation for the difference $(f_t-\widetilde{f}_t)(x)$ of two solutions of \eqref{eq: kinet}
\begin{equation} \label{eq: mu - tildemu} \partial_t (f_t-\widetilde{f}_t)(x) + \nabla \cdot \left(V[f_t](x)f_t(x) -  V[\widetilde{f}_t](x)\widetilde{f}_t(x) \right) +b(t, N_t))(f_t-\widetilde{f}_t)(x)=0 .\end{equation}
Next, we consider the equation
\begin{equation} \label{eq: phi} \partial_t \phi(t,x) +V[f_t](x) \cdot \nabla \phi(t,x) +b(t, N_t) \phi(t,x)=0 , \qquad \phi(T, x)=\bar{\phi}(x) , \end{equation}
which is dual to \eqref{eq: mu - tildemu}. For the sake of the proof we restrict the function $\bar{\phi}$ to Lipschitz functions with constant $1$.
Differentiating \eqref{eq: phi} in the $x$ variable we get
\begin{equation*} \partial_t \nabla \phi(t,x) +V[f_t](x) \cdot \nabla \nabla \phi(t,x) + \nabla V[f_t](x) \cdot \nabla \phi(t,x) + b(t, N_t) \nabla \phi(t,x)=0 ,\end{equation*}
which leads to the following $L^{\infty}$ estimate
\begin{equation*} \frac{d}{dt}\| \nabla \phi(t,\cdot) \|_{L^\infty (\mathbb{R}^d)} \leq  \left(\| b \|_{L^{\infty}([0, T))} + \| \nabla V[f_t](\cdot) \|_{L^{\infty}([0, T)\times \mathbb{R}^d)} \right) \, \| \nabla \phi(t,\cdot) \|_{L^{\infty}(\mathbb{R}^d)} .\end{equation*}
Observe that the boundedness of $\| \nabla V[f_t](\cdot) \|_{L^{\infty}([0, T)\times \mathbb{R}^d)}$ follows by the fact that any solution $f_t$ has bounded support.
The last inequality implies (given $\|\nabla \bar{\phi}\|_{L^{\infty}(\mathbb{R}^d)} \leq 1$) that
\begin{equation*} \|\nabla \phi(t,\cdot)\|_{L^{\infty}(\mathbb{R}^d)} \leq e^{ (T-t) (\| b \|_{L^{\infty}([0, T))}+ \| \nabla V[f_t](\cdot) \|_{L^{\infty}([0, T)\times \mathbb{R}^d)})} \qquad  \text{for} \quad t \in [0, T) .\end{equation*}
Let us set $C_T:=\| b \|_{L^{\infty}([0, T))}+ \| \nabla V[f_t](\cdot) \|_{L^{\infty}([0,T) \times \mathbb{R}^d)}$ to simplify notation. Equations \eqref{eq: mu - tildemu}-\eqref{eq: phi} yield
\begin{equation} \label{eq: mu-phi} \begin{aligned} \partial_t (\phi (t,x) & (f_t -\widetilde{f}_t)(x))+ \phi (t,x) \nabla \cdot \left( V[f_t](x)f_t(x) - V[\widetilde{f}_t](x) \widetilde{f}_t(x) \right) \\ & + \nabla \phi (t,x) \cdot V[f_t](x) (f_t -\widetilde{f}_t)(x) +2 b(t, N_t)\phi (t,x) (f_t -\widetilde{f}_t)(x)=0 .\end{aligned} \end{equation}
Integrating \eqref{eq: mu-phi} in time from $0$ to $T$ and in space over $\mathbb{R}^d$ we get
\begin{align*} \int_{\mathbb{R}^d} \bar{\phi}(x) (f_t-\widetilde{f}_t)(x) \, \diff x &= \int_{\mathbb{R}^d} \phi(0, x)(f_0-\widetilde{f}_0)(x) \, \diff x   -  \int_0^T  \! \! \int _{\mathbb{R}^d} \nabla \phi(t,x) \cdot \left( V[f_t](x)-  V[\widetilde{f}_t](x)\right)f_t (x) \, \diff x \, \diff t \\  & +  2 \int_0^T \! \! \int_{\mathbb{R}^d} b(t, N_t) \phi(t, x) (f_t  - \widetilde{f}_t)(x)  \diff x \, \diff t = \pazocal{I}_1 + \pazocal{I}_2 + \pazocal{I}_3 .\end{align*}
The first term $\pazocal{I}_1$ is bounded by
\begin{equation*} \pazocal{I}_1 \leq e^{C_T  T} W_1(f_0 ,\widetilde{f}_0) . \end{equation*}
For $\pazocal{I}_2$ we can show that $\|V[f_t]-V[\widetilde{f}_t]\|_{L^{\infty}(\mathbb{R}^d)} \leq 2 \pazocal{R}_* \psi_M W_1(f_t ,\widetilde{f}_t)$. Indeed, this follows directly due to
\begin{equation*} V[f_t](x)-V[\widetilde{f}_t](x) =\int_{\mathbb{R}^d} \psi(|y-x|) (y-x) (f_t(y)-\widetilde{f}_t(y) ) \, \diff y .\end{equation*}
As a result \begin{equation*} \pazocal{I}_2 \leq 2 \pazocal{R}_* \psi_M \int_0^T e^{C_T (T-t)} W_1(f_t, \widetilde{f}_t) \diff t .\end{equation*}
Meanwhile the $\pazocal{I}_3$ term is bounded by
\begin{equation*} \pazocal{I}_3 \leq 2 \| b \|_{L^{\infty}([0,T))} \int_0 ^T e^{ C_T (T-t)} W_1(f_t, \widetilde{f}_t) \diff t .\end{equation*}
Putting everything together, and taking the supremum over all $\bar{\phi}$ functions we obtain inequality
\begin{equation*} W_1(f_t, \widetilde{f}_t) \leq e^{C_T T} W_1(f_0 ,\widetilde{f}_0) +(2 \pazocal{R}_* \psi_M + 2 \| b \|_{L^{\infty}([0, T))}) \int_0 ^T e^{ C_T (T-t)} W_1(f_t, \widetilde{f}_t) \diff t .\end{equation*}
The desired estimate follows from  Gr\"onwall's integral inequality.
\end{proof}

\subsection{Moments and Convergence in the kinetic model}

The corresponding definition of moments for the kinetic model for a solution $f_t(x)$ are
\begin{equation*}  M_0(t)=\int_{\mathbb{R}^d} f_t(x) \, \diff x , \quad M_1(t)=\int_{\mathbb{R}^d} x f_t(x) \, \diff x , \quad  M_2(t)=\int_{\mathbb{R}^d} |x|^2 f_t(x) \, \diff x.\end{equation*}
In a similar fashion like when working with the continuous model we obtain the following result for the evolution of the kinetic moments

\begin{proposition} Let $f_t(x)$ be a weak solution to \eqref{eq: kinet}, with initial data $f_0(x)$. Then, for $t \geq 0$ the following expressions hold for the moments of $f_t(x)$,
\begin{equation} \label{eq: M_iDeriv}
\begin{aligned}
(i) \quad \dot{M}_0(t)&=0,  \\
(ii) \quad \dot{M}_1(t)&=-b(t,N_t)M_1(t)+b(t,N_t)X(t, N_t) , \\
(iii) \quad \dot{M}_2(t)&=-b(t,N_t) M_2(t)+b(t,N_t)|X(t, N_t)|^2-D(f_t(\cdot)) ,
\end{aligned}
\end{equation}
where the dissipation term is now given by
\begin{equation*} D(f_t(\cdot)):=\iint_{\mathbb{R}^{2d}} \psi(|x-y|)|x-y|^2 f_t(x) f_t(y) \, \diff x \, \diff y .\end{equation*}
Hence \eqref{eq: kinet} propagates finiteness of 0, 1st and 2nd moments, i.e. if $f_0 \in \pazocal{P}_i(\mathbb{R}^d)$ then $f_t \in \pazocal{P}_i(\mathbb{R}^d)$ for $i=0,1,2$, for all $t \geq 0$.
\end{proposition}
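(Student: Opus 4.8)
The plan is to read off the three identities by testing the weak formulation \eqref{eq: Weak_form} against the time-independent weights $\varphi(x)=1$, $\varphi(x)=x_j$ and $\varphi(x)=|x|^2$; the transport term will then be handled by the symmetry of $\psi(|x-y|)(x-y)$ under the exchange $x\leftrightarrow y$, exactly as the identities \eqref{eq: Symm} were used for the microscopic moments in Proposition \ref{proposition: moment_Deriv}. The first point to settle is that these $\varphi$ are not admissible test functions, and I would bypass this using the a priori support bound of Proposition \ref{proposition: Kinet_Well-posedness}: a weak solution satisfies $\mathrm{supp}\, f_t\subset B(0,\pazocal{R}_*)$ for all $t$, and by \eqref{eq: X(t)_cond} the point $X(t,N_t)$ lies in $B(0,X_B)\subset B(0,\pazocal{R}_*)$. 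Choosing $\chi\in C_c^\infty(\mathbb{R}^d)$ with $\chi\equiv 1$ on $B(0,2\pazocal{R}_*)$ and testing \eqref{eq: Weak_form} with $\varphi(t,x)=\chi(x)g(x)\theta(t)$, where $\theta\in C_c^\infty([0,T))$ and $g\in\{1,x_j,|x|^2\}$, the cutoff is invisible to every term, since each integral pairs $\chi g$ or $\nabla(\chi g)=\nabla g$ against $f_t$, $f_0$ or $\delta_{X(t,N_t)}$, all supported where $\chi\equiv 1$. This reduces \eqref{eq: Weak_form} to the statement that $M_g(t):=\int_{\mathbb{R}^d} g(x)f_t(x)\,\diff x$ solves, in the distributional sense on $[0,T)$ with the prescribed initial value,
\begin{equation*} \dot M_g(t)=\int_{\mathbb{R}^d} V[f_t](x)\cdot\nabla g(x)\,f_t(x)\,\diff x-b(t,N_t)M_g(t)+b(t,N_t)\,g(X(t,N_t)),\qquad M_g(0)=\int_{\mathbb{R}^d} g\,f_0\,\diff x ; \end{equation*}
since $f_t\in C([0,T];\pazocal{P}_1(\mathbb{R}^d))$ has uniformly bounded support and $b(t,N_t)$, $X(t,N_t)$ are continuous in $t$, the right-hand side is continuous in $t$, hence $M_g$ is a classical $C^1$ solution.

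The next step is to substitute the three weights. For $g\equiv 1$ the transport term drops, one gets $\dot M_0=b(t,N_t)(1-M_0)$ with $M_0(0)=1$, hence $M_0\equiv 1$ and $\dot M_0=0$. For $g(x)=x_j$, $\nabla g=e_j$ and I would observe that
\begin{equation*} \int_{\mathbb{R}^d} V[f_t](x)\cdot e_j\, f_t(x)\,\diff x=\iint_{\mathbb{R}^{2d}}\psi(|y-x|)(y-x)_j\, f_t(x)f_t(y)\,\diff x\,\diff y=0 \end{equation*}
by antisymmetry of the integrand under $x\leftrightarrow y$, which gives (ii). For $g(x)=|x|^2$, $\nabla g=2x$ and symmetrizing in $(x,y)$ one finds
\begin{equation*} \int_{\mathbb{R}^d} V[f_t](x)\cdot 2x\, f_t(x)\,\diff x=\iint_{\mathbb{R}^{2d}}\psi(|y-x|)\bigl((y-x)\cdot x+(x-y)\cdot y\bigr)f_t(x)f_t(y)\,\diff x\,\diff y=-D(f_t(\cdot)), \end{equation*}
since $(y-x)\cdot x+(x-y)\cdot y=-|x-y|^2$; together with the source contribution $b(t,N_t)(|X(t,N_t)|^2-M_2)$ this is exactly (iii).

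For propagation of moments, (i) already yields $M_0\equiv 1$, i.e. $f_t\in\pazocal{P}_0(\mathbb{R}^d)$. Because $\psi\ge 0$ we have $D(f_t(\cdot))\ge 0$, so (iii) gives $\dot M_2+b(t,N_t)M_2\le b(t,N_t)X_B^2$, and Gr\"onwall yields $M_2(t)\le\max\{M_2(0),X_B^2\}$, which is finite whenever $M_2(0)$ is; then $\int_{\mathbb{R}^d}|x|f_t(x)\,\diff x\le (M_0M_2)^{1/2}<\infty$ and (ii) integrates to a bounded $M_1(t)$, so finiteness of the $0$th, $1$st and $2$nd moments is propagated. (If one wishes to propagate a finite first moment without assuming a finite second one, I would instead test with the smooth Lipschitz weight $\sqrt{1+|x|^2}$, whose gradient is bounded by $1$, and close a Gr\"onwall inequality using $\|V[f_t]\|_{L^\infty}$; this is routine and I would not write it out.)

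I expect the only genuine difficulty to be the first step, namely legitimizing the non-compactly-supported weights in the weak formulation, and this is resolved entirely by the uniform-in-time support bound of Proposition \ref{proposition: Kinet_Well-posedness}. Everything afterwards mirrors the microscopic computation; in fact the kinetic identities \eqref{eq: M_iDeriv} are the images of \eqref{eq: m_iDeriv} under the substitution of $f_t$ by the empirical measure \eqref{eq: emp_meas}.
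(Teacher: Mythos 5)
Your proposal is correct and follows essentially the same route as the paper: multiply (test) the kinetic equation by $1$, $x$, $|x|^2$, integrate, kill or symmetrize the transport term using the antisymmetry of $\psi(|x-y|)(y-x)$, and read off the source contribution $b(t,N_t)(g(X(t,N_t))-M_g(t))$. The paper performs this as a formal integration by parts without comment; your cutoff argument via the uniform support bound of Proposition \ref{proposition: Kinet_Well-posedness} and the Gr\"onwall bound $M_2(t)\leq\max\{M_2(0),X_B^2\}$ are welcome rigorizations of steps the paper leaves implicit, not a different method.
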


\begin{proof}
The moment equations are derived by multiplying \eqref{eq: kinet} with the expressions $\{ 1, x, |x|^2 \}$ and integrating over $\mathbb{R}^d$. First we multiply with the transport term $\nabla \cdot (V[f_t](x) f_t(x))$ and after we integrate by parts we get
\begin{equation*} \int_{\mathbb{R}^d}
\left( \begin{array}{c} 1 \\ x \\ |x|^2 \end{array} \right)\nabla \cdot (V[f_t](x) f_t(x))\, \diff x=\left( \begin{array}{c} 0 \\ 0 \\  D(f_t(\cdot)) \end{array} \right) . \end{equation*}
Next step we multiply with the source term $h[f_t](x)$ and integrate to get
\begin{equation*} \int_{\mathbb{R}^d} \left( \begin{array}{c} 1 \\ x \\ |x|^2 \end{array} \right) h[f_t](x)\, \diff x=\left( \begin{array}{c} 0 \\ b(t,N_t) (X(t, N_t)-M_1(t)) \\ b(t,N_t) (|X(t, N_t)|^2-M_2(t)) \end{array} \right)  .\end{equation*}
Putting everything together yields \eqref{eq: M_iDeriv}.
\end{proof}
The similarity between the moment equations in the kinetic and the microscopic description yields the following
\begin{corollary} \label{corollary: M_1}
Let $f_t(\cdot) \in \pazocal{P}_1(\mathbb{R}^d)$  be a solution to the symmetric consensus model \eqref{eq: kinet} with boundary data $X(t, N_t)$. When $N_\infty=\infty$ we have
\begin{equation*}  M_1(t) \xrightarrow{ t \to \infty } \frac{1}{N_t}\int_0^t X(s, N_s) \dot{N}_s\, \diff s  ,\end{equation*}
while for $N_\infty < \infty$ then
\begin{equation*}  M_1(t) \xrightarrow{ t \to \infty } \frac{N_0}{N_\infty}M_1(0)+\frac{1}{N_{\infty}}\int_0^{\infty} X(s, N_s) \dot{N}_s \, \diff s. \end{equation*}
\end{corollary}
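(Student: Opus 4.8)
The plan is to observe that equation (ii) in \eqref{eq: M_iDeriv}, governing the kinetic first moment $M_1(t)$, is \emph{formally identical} to equation (ii) in \eqref{eq: m_iDeriv}, governing the microscopic first moment $m_1(t)$; hence the computation in the proof of Corollary \ref{corollary: m_1} transfers verbatim. Concretely, $M_1(t)$ solves the scalar linear ODE $\dot{M}_1(t)=-b(t,N_t)M_1(t)+b(t,N_t)X(t,N_t)$ with initial datum $M_1(0)=\int_{\mathbb{R}^d} x f_0(x)\,\diff x$, which is finite because $f_0\in\pazocal{P}_1(\mathbb{R}^d)$.

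First I would integrate this ODE with the integrating factor $e^{\int_0^t b(s,N_s)\,\diff s}$, obtaining
\begin{equation*} M_1(t)=e^{-\int_0^t b(s,N_s)\,\diff s}M_1(0)+\int_0^t b(s,N_s)X(s,N_s)e^{-\int_s^t b(s',N_{s'})\,\diff s'}\,\diff s. \end{equation*}
Then, exactly as in Corollary \ref{corollary: m_1}, I would use the identities $e^{-\int_0^t b(s,N_s)\,\diff s}=N_0/N_t$ and $e^{-\int_s^t b(s',N_{s'})\,\diff s'}=N_s/N_t$ — both consequences of the semi-explicit solution $N_t=e^{\int_0^t b(s,N_s)\,\diff s}N_0$ of \eqref{eq: growth} — together with $b(s,N_s)N_s=\dot{N}_s$, to put the solution in the closed form
\begin{equation*} M_1(t)=\frac{N_0}{N_t}M_1(0)+\frac{1}{N_t}\int_0^t X(s,N_s)\dot{N}_s\,\diff s. \end{equation*}

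From this closed form both assertions are immediate. If $N_\infty=\infty$, then $N_0/N_t\to 0$, so $M_1(t)$ and $\frac{1}{N_t}\int_0^t X(s,N_s)\dot{N}_s\,\diff s$ share the same limiting behavior, which is the first claim. If $N_\infty<\infty$, then $N_0/N_t\to N_0/N_\infty$ and $1/N_t\to 1/N_\infty$, while $\int_0^t X(s,N_s)\dot{N}_s\,\diff s\to\int_0^\infty X(s,N_s)\dot{N}_s\,\diff s$ as $t\to\infty$; this improper integral converges absolutely because $|X(s,N_s)|\le X_B$ by \eqref{eq: X(t)_cond} and $\int_0^\infty\dot{N}_s\,\diff s=N_\infty-N_0<\infty$. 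Passing to the limit yields $M_1(t)\to\frac{N_0}{N_\infty}M_1(0)+\frac{1}{N_\infty}\int_0^\infty X(s,N_s)\dot{N}_s\,\diff s$.

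There is no genuinely hard step here: the argument is a one-line reduction to Corollary \ref{corollary: m_1}, so the only points to verify are that the weak formulation of \eqref{eq: kinet} does produce the stated ODE for $M_1$ — which is precisely the preceding proposition, obtained by testing \eqref{eq: kinet} against $\varphi=x$ — and that the bound $|X|\le X_B$ together with $N_\infty<\infty$ makes the limiting constant in the second case well-defined.
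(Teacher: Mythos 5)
Your proposal is correct and follows exactly the route the paper takes: the paper states Corollary \ref{corollary: M_1} as an immediate consequence of the identity between the kinetic moment equation \eqref{eq: M_iDeriv}(ii) and the microscopic one \eqref{eq: m_iDeriv}(ii), and the integrating-factor computation together with the identities $e^{-\int_0^t b(s,N_s)\,\diff s}=N_0/N_t$ and $e^{-\int_s^t b(s',N_{s'})\,\diff s'}=N_s/N_t$ is precisely the argument given in the proof of Corollary \ref{corollary: m_1}. Your added check that the improper integral converges when $N_\infty<\infty$ (via $|X|\le X_B$ and $\int_0^\infty\dot N_s\,\diff s=N_\infty-N_0$) matches the paper's remark that the integral exists for the same reason.
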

For any solution $f_t \in \pazocal{P}_2(\mathbb{R}^d)$ it follows after a simple use of the triangle inequality that $D(f_t(\cdot))<\infty$, for all $t \geq 0$. We also have an estimate for $V[f_t](x)$ from a simple application of the Cauchy-Schwartz inequality
\begin{align*} \label{eq: V_C-S} |V[f_t](x)|^2 &=\Big| \int \psi(|x-y|) (y-x) f_t(y)\, \diff y \Big|^2 \leq \left( \int \psi(|x-y|) |x-y| f_t(y)\, \diff y \right)^2 \\ & \leq \psi_M \int  \psi(|x-y|) |x-y|^2 f_y(y) \, \diff y , \qquad x \in \mathbb{R}^d .\end{align*} This implies that when we
work in the space of densities with bounded second moment, then $V[f_t](x) \in L^2(\mathbb{R}^d ; f_t(x) \, \diff x)$ due to the boundedness of $D(f_t(\cdot))<\infty$.

To study of asymptotic behavior we need to define the macroscopic variance functional and show its evolution in time.
\begin{proposition} Assume a solution $f_t(x) \in \pazocal{P}_2(\mathbb{R}^d)$ of problem \eqref{eq: kinet} for a profile of incoming opinions $X(t, N_t)$. We define the \textbf{macroscopic variance} functional $\pazocal{V}(f_t(\cdot))$ by
\begin{equation} \label{eq: Liapunov_mu} \pazocal{V}(f_t(\cdot)):=\int_{\mathbb{R}^d} |x-M_1(t)|^2 f_t(x)\, \diff x . \end{equation}
The derivative of $\pazocal{V}(f_t(\cdot))$ along this solution satisfies \begin{equation} \label{eq: Liapunov_mu_Der} \dot{\pazocal{V}}(f_t(\cdot))=-b(t, N_t) \pazocal{V}(f_t(\cdot)) +b(t, N_t)| M_1(t)-X(t, N_t)|^2 -D(f_t(\cdot)). \end{equation} \end{proposition}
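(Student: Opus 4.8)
The plan is to bypass a direct differentiation of the integral and instead reduce everything to the moment equations \eqref{eq: M_iDeriv} that were just established. First I would expand the square inside the definition of $\pazocal{V}(f_t(\cdot))$ and use that $M_0(t)=\int_{\mathbb{R}^d} f_t(x)\,\diff x=1$ for all $t$ (which follows from $\dot M_0=0$ and $f_0\in\pazocal{P}_1(\mathbb{R}^d)$) to obtain the algebraic identity
\begin{equation*} \pazocal{V}(f_t(\cdot))=M_2(t)-|M_1(t)|^2 , \end{equation*}
valid for every $t\geq 0$ since the previous proposition guarantees $f_t\in\pazocal{P}_2(\mathbb{R}^d)$ and hence $M_2(t)<\infty$. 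Differentiating this relation in time gives $\dot{\pazocal{V}}(f_t(\cdot))=\dot M_2(t)-2M_1(t)\cdot\dot M_1(t)$.

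Next I would substitute $\dot M_1(t)=-b(t,N_t)\big(M_1(t)-X(t,N_t)\big)$ and $\dot M_2(t)=-b(t,N_t)\big(M_2(t)-|X(t,N_t)|^2\big)-D(f_t(\cdot))$ from \eqref{eq: M_iDeriv}. Writing $X=X(t,N_t)$ for brevity, this produces
\begin{equation*} \dot{\pazocal{V}}(f_t(\cdot))=-b(t,N_t)\big(M_2(t)-|X|^2-2|M_1(t)|^2+2M_1(t)\cdot X\big)-D(f_t(\cdot)) . \end{equation*}
The last step is the purely algebraic rearrangement
\begin{equation*} M_2(t)-|X|^2-2|M_1(t)|^2+2M_1(t)\cdot X=\big(M_2(t)-|M_1(t)|^2\big)-\big(|M_1(t)|^2-2M_1(t)\cdot X+|X|^2\big)=\pazocal{V}(f_t(\cdot))-|M_1(t)-X|^2 , \end{equation*}
which, inserted above, is exactly \eqref{eq: Liapunov_mu_Der}.

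An alternative, more self-contained route is to differentiate $\int_{\mathbb{R}^d}|x-M_1(t)|^2 f_t(x)\,\diff x$ directly via the weak formulation \eqref{eq: Weak_form} with the time-dependent test function $\varphi(t,x)=|x-M_1(t)|^2$, for which $\partial_t\varphi=-2\big(x-M_1(t)\big)\cdot\dot M_1(t)$ and $\nabla\varphi=2\big(x-M_1(t)\big)$: the source term $h[f_t]$ then contributes the $-b(t,N_t)\,\pazocal{V}(f_t(\cdot))$ and $b(t,N_t)\,|M_1(t)-X(t,N_t)|^2$ pieces, while the transport term yields $-D(f_t(\cdot))$ after the same symmetrization in $x,y$ used in the kinetic moment proposition. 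The only point requiring care here is that $\varphi$ is not compactly supported, so one truncates with a smooth cutoff and lets it expand, which is harmless because every solution $f_t$ has uniformly bounded support by Proposition \ref{proposition: Kinet_Well-posedness}. Either way there is no real obstacle: the entire content is the bookkeeping of the quadratic terms, and the moment-equation route makes it transparent.
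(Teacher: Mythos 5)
Your primary argument is correct and takes a genuinely different route from the paper. The paper differentiates $\pazocal{V}(f_t(\cdot))=\int|x-M_1(t)|^2 f_t(x)\,\diff x$ directly: the term $-2\dot M_1(t)\cdot\int(x-M_1(t))f_t(x)\,\diff x$ vanishes because $f_t$ has unit mass, the PDE is substituted for $\partial_t f_t$, and the transport contribution is symmetrized in $(x,y)$ to produce $-D(f_t(\cdot))$ while the source contribution gives the two $b(t,N_t)$ terms. You instead observe that $M_0(t)\equiv 1$ gives the algebraic identity $\pazocal{V}(f_t(\cdot))=M_2(t)-|M_1(t)|^2$ and then differentiate using the already-established moment ODEs \eqref{eq: M_iDeriv}; after the quadratic rearrangement this yields \eqref{eq: Liapunov_mu_Der} with no further integration by parts or symmetrization, since all of that work was already done once in the moment proposition. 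Your algebra checks out: $M_2-|X|^2-2|M_1|^2+2M_1\cdot X=\pazocal{V}-|M_1-X|^2$. What your route buys is economy and transparency (the result is seen to be a formal consequence of the moment equations alone); what the paper's direct route buys is independence from the validity of the $M_2$ equation and a computation that generalizes to variance-like functionals not expressible through the first two moments. Your sketched ``alternative'' is essentially the paper's proof, and your remark about truncating the non-compactly-supported test function $|x-M_1(t)|^2$ using the uniform bound on $\mathrm{supp}\,f_t$ is a point the paper glosses over.
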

\begin{proof}
Differentiating \eqref{eq: Liapunov_mu} along solutions of \eqref{eq: kinet} we get
\begin{align*} \dot{\pazocal{V}}(f_t(\cdot))&=-2 \int_{\mathbb{R}^d} \dot{M}_1(t) \cdot \left(x-M_1(t)\right)f_t(x) \, \diff x + \int_{\mathbb{R}^d} |x-M_1(t)|^2 \, \partial_t f_t(x)\, \diff x \\ &= \int_{\mathbb{R}^d} |x-M_1(t)|^2 \left( -\nabla \cdot(V[f_t](x)f_t(x))+h[f_t](x) \right) \, \diff x \\ &=2 \int_{\mathbb{R}^d} (x-M_1(t)) \cdot V[f_t](x)f_t(x)\, \diff x +\int_{\mathbb{R}^d} |x-M_1(t)|^2 h[f_t](x)\, \diff x = \pazocal{T}_1 + \pazocal{T}_2 . \end{align*}
The first term $\pazocal{T}_1$ equals the dissipation $-D(f_t(\cdot))$, i.e.
\begin{align*} \pazocal{T}_1  =  2\iint_{\mathbb{R}^{2d}} \psi(|x-y|)  \, x \cdot (y-x)f_t(x) f_t(y)\, \diff x \, \diff y  =  -D(f_t(\cdot)), \end{align*}
while term $\pazocal{T}_2$ is
\begin{align*} \pazocal{T}_2=\int_{\mathbb{R}^d} |x-M_1(t)|^2 h[f_t](x)\, dx=-b(t, N_t) \pazocal{V}(f_t(\cdot)) + b(t, N_t)|X(t, N_t)-M_1(t)|^2 . \end{align*}
Combining the two terms $\pazocal{T}_1$ and $\pazocal{T}_2$ yields \eqref{eq: Liapunov_mu_Der}.
\end{proof}

Next, we give the main estimate for the long time asymptotics for when $N_{\infty}< \infty$.
\begin{lemma} \label{lemma: D(f)_regularity} Let $N_{\infty} < \infty $. Assume also that the interaction kernel is Lipschitz, i.e. $\psi(\cdot) \in W^{1,\infty}(\mathbb{R})$.
For any solution $f_t \in C([0,\infty); \pazocal{P}_1(\mathbb{R}^d))$ of \eqref{eq: kinet} with compactly supported initial data
$f_0(x) \in \pazocal{P}_2(\mathbb{R}^d)$ we have that $D(f_t(\cdot)) \xrightarrow{ t \to \infty } 0$. Moreover, if we have that $f_t \rightharpoonup^*  f_{\infty}$, as $t \to \infty$, for
some steady state $f_{\infty} \in \pazocal{P}_1(\mathbb{R}^d)$ then $D(f_{\infty}(\cdot))=0$.
\end{lemma}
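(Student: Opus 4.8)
The plan is to first establish the integrability $\int_0^\infty D(f_t(\cdot))\,\diff t<\infty$, then bootstrap it to $D(f_t(\cdot))\to0$ by a Barbalat-type argument, using throughout that by Proposition~\ref{proposition: Kinet_Well-posedness} the compact support of $f_0$ propagates, so $\mathrm{supp}\,f_t\subset B(0,\pazocal{R}_*)$ for all $t\geq0$ with $\pazocal{R}_*=\max\{\pazocal{R}_{in},X_B\}$; in particular $f_t\in\pazocal{P}_2(\mathbb{R}^d)$ and all the quantities appearing below are uniformly bounded in $t$.

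\textbf{Step 1 (integrability).} I would start from the macroscopic-variance identity \eqref{eq: Liapunov_mu_Der}. Dropping the nonpositive term $-b(t,N_t)\pazocal{V}(f_t(\cdot))$ gives $D(f_t(\cdot))\le-\dot{\pazocal{V}}(f_t(\cdot))+b(t,N_t)\,|M_1(t)-X(t,N_t)|^2$, and since $\mathrm{supp}\,f_t\subset B(0,\pazocal{R}_*)$ forces $|M_1(t)-X(t,N_t)|\le 2\pazocal{R}_*$, integrating on $[0,T]$ yields
\begin{equation*}
\int_0^T D(f_t(\cdot))\,\diff t \;\le\; \pazocal{V}(f_0(\cdot)) + 4\pazocal{R}_*^2\int_0^\infty b(s,N_s)\,\diff s ,
\end{equation*}
which is finite and independent of $T$ precisely because $N_\infty<\infty$ is equivalent to $\int_0^\infty b(s,N_s)\,\diff s<\infty$. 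Hence $D(f_t(\cdot))\in L^1(0,\infty)$.

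\textbf{Step 2 (passage to the limit and the steady-state claim).} Set $\Phi(x,y):=\psi(|x-y|)|x-y|^2$, so $D(f_t(\cdot))=\iint_{\mathbb{R}^{2d}}\Phi(x,y)f_t(x)f_t(y)\,\diff x\,\diff y$. Since $\psi$ is Lipschitz, $\Phi$ is Lipschitz, and on $B(0,\pazocal{R}_*)\times B(0,\pazocal{R}_*)$ both $\Phi$ and $\nabla_x\Phi$ are bounded by a constant depending only on $L_\psi$, $\psi_M$ and $\pazocal{R}_*$ (explicitly $\nabla_x\Phi=[\psi'(|x-y|)|x-y|+2\psi(|x-y|)](x-y)$). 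Differentiating $D(f_t(\cdot))$ in time — after mollifying $\Phi$ so as to legitimately apply the weak formulation of \eqref{eq: kinet} to the symmetric two-particle observable $\Phi$, then passing to the limit — and substituting $\partial_t f_t=-\nabla\cdot(V[f_t]f_t)+h[f_t]$, the transport part contributes $2\iint\nabla_x\Phi(x,y)\cdot V[f_t](x)f_t(x)f_t(y)\,\diff x\,\diff y$ and the source part contributes $2b(t,N_t)\big(\int\Phi(X(t,N_t),y)f_t(y)\,\diff y-D(f_t(\cdot))\big)$; using $|V[f_t](x)|\le2\psi_M\pazocal{R}_*$ on the support, this gives
\begin{equation*}
\left|\frac{\diff}{\diff t}D(f_t(\cdot))\right|\;\le\; C_1 + C_2\,b(t,N_t)
\end{equation*}
for constants $C_1,C_2$ depending only on $L_\psi,\psi_M,\pazocal{R}_*$. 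Since $t\mapsto b(t,N_t)\in L^1(0,\infty)$, a mild generalization of Barbalat's lemma applies: if $D(f_{t_n}(\cdot))\ge\varepsilon$ along some $t_n\to\infty$, then on $[t_n,t_n+\varepsilon/(4C_1)]$ one has $D(f_t(\cdot))\ge\varepsilon/2$ for all large $n$ (because $C_2\int_{t_n}^\infty b(s,N_s)\,\diff s\to0$), producing infinitely many disjoint intervals on which $D$ is bounded below — contradicting Step 1. Hence $D(f_t(\cdot))\to0$. For the last assertion, if $f_t\rightharpoonup^* f_\infty$ then $\mathrm{supp}\,f_\infty\subset\overline{B(0,\pazocal{R}_*)}$, and on that fixed compact set weak-$^*$ convergence passes to products, $f_t\otimes f_t\rightharpoonup^* f_\infty\otimes f_\infty$; since $\Phi$ is bounded and continuous there, $D(f_t(\cdot))\to D(f_\infty(\cdot))$, and combining with $D(f_t(\cdot))\to0$ forces $D(f_\infty(\cdot))=0$.

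\textbf{Main obstacle.} The delicate point is the differentiation in Step 2: one must rigorously differentiate the nonsmooth two-particle functional $D(f_t(\cdot))$ along a merely measure-valued solution of \eqref{eq: kinet}, which requires a careful mollification/limiting argument relying on the uniform support bound; and, because $b$ is only assumed integrable rather than bounded, the usual Barbalat lemma must be replaced by the variant above in which $\frac{\diff}{\diff t}D(f_t(\cdot))$ is dominated by a constant plus an $L^1(0,\infty)$ function.
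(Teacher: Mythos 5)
Your proof is correct and follows the same overall strategy as the paper: establish $\int_0^\infty D(f_t(\cdot))\,\diff t<\infty$ from the variance identity \eqref{eq: Liapunov_mu_Der} together with $\int_0^\infty b(s,N_s)\,\diff s<\infty$, then compute $\frac{\diff}{\diff t}D(f_t(\cdot))$ (your transport and source contributions are exactly the paper's terms $\pazocal{I}_1$, $\pazocal{I}_2$ and $-2b(t,N_t)D(f_t(\cdot))$) and conclude by a Barbalat-type argument. Two points where you go beyond the paper are worth flagging. First, the paper bounds its term $\pazocal{I}_2$ by $\psi_M\|b\|_{L^\infty([0,+\infty))}(X_B+\pazocal{R}_*)^2$, i.e.\ it tacitly assumes $b$ is globally bounded, an assumption not present in the lemma's hypotheses; your version keeps the bound in the form $C_1+C_2\,b(t,N_t)$ and replaces standard Barbalat with the variant for a derivative dominated by a constant plus an $L^1$ function, which is the more honest treatment since $N_\infty<\infty$ only guarantees $b\in L^1$. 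Second, the paper's proof stops at the boundedness of $\dot{D}(f_t(\cdot))$ and never addresses the final claim $D(f_\infty(\cdot))=0$; your argument via $f_t\otimes f_t\rightharpoonup^* f_\infty\otimes f_\infty$ on the fixed compact support, using the boundedness and continuity of $\Phi(x,y)=\psi(|x-y|)|x-y|^2$ there, supplies the missing step correctly. Your caveat about mollifying $\Phi$ (since $\psi$ is only $W^{1,\infty}$ and the solution is measure-valued) is also a legitimate technical point that the paper glosses over.
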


\begin{proof} We start with the observation that \eqref{eq: Liapunov_mu_Der} implies that $\int_0^{\infty} D(f_t(\cdot))\, \diff t <\infty$ (since $\int_0^{\infty} b(t, N_t)\, \diff t < \infty$). We need to show that $D(f_t(\cdot))$ cannot fluctuate wildly as a function of time, i.e. that $\dot{D}(f_t(\cdot))$ is bounded uniformly in time.

Differentiating $D(f_t(\cdot))$ in time we get
\begin{equation} \label{eq: dotD(t)} \begin{aligned} \dot{D}(f_t(\cdot))=& 2 \iint_{\mathbb{R}^{2d}} \psi(|x-y|)|x-y|^2 \partial_t f_t(x) f_t(y) \, \diff x \diff y \\
=& 2 \iint_{\mathbb{R}^{2d}} \left(\psi'(|x-y|)|x-y| + 2 \psi(|x-y|) \right) V[f_t](x) \cdot (x-y)  f_t(x) f_t(y) \, \diff x \diff y  \\
+& 2 \int_{\mathbb{R}^{d}} \psi(|X(t, N_t)-y|)|X(t, N_t)-y|^2 b(t, N_t) f_t(y) \, \diff y \\
-& 2 b(t, N_t) \iint_{\mathbb{R}^{2d}} \psi(|x-y|)|x-y|^2 f_t(x) f_t(y) \, \diff x \diff y =2\pazocal{I}_1(t)+2\pazocal{I}_2(t) -2 b(t, N_t) D(f_t(\cdot)). \end{aligned} \end{equation}
Terms $\pazocal{I}_1(t)$ and $\pazocal{I}_2(t)$ in \eqref{eq: dotD(t)} are easily bounded by
\begin{equation*} |\pazocal{I}_1(t)| \leq 2 \pazocal{R}_* d (2 \pazocal{R}_* \|\psi'\|_{L^{\infty}(\mathbb{R}_+)} + 2 \psi_M ) \|V[f_t](\cdot)\|_{L^{\infty}([0,+\infty) \times \mathbb{R}^d)} ,\end{equation*}
and
\begin{equation*} |\pazocal{I}_2(t)| \leq \psi_M \|b \|_{L^{\infty}([0,+\infty))} (X_B + \pazocal{R}_*)^2 .\end{equation*}
Hence, $\dot{D}(f_t(\cdot))$ is bounded.
\end{proof}

The following easy corollary follows when $\psi(r)>0$,

\begin{corollary} \label{corollary: psi_pos}
Let $N_{\infty} < \infty $. Assume also that the interaction kernel $\psi(r)$ is Lipschitz and positive.
For any solution $f_t \in C([0,\infty); \pazocal{P}_1(\mathbb{R}^d))$ of \eqref{eq: kinet} with compactly supported initial data
$f_0(x) \in \pazocal{P}_2(\mathbb{R}^d)$ we have that $\pazocal{V}(f_t(\cdot)) \xrightarrow{ t \to \infty } 0$, i.e. the solution concentrates around $M_1(t)$.
\end{corollary}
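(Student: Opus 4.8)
The strategy is to turn the integrability of the dissipation, already upgraded to pointwise decay in Lemma \ref{lemma: D(f)_regularity}, into decay of the variance by bounding $\pazocal{V}(f_t(\cdot))$ from above by $D(f_t(\cdot))$; this bound is precisely where positivity of $\psi$ enters. First I would fix the uniform-in-time support: by Proposition \ref{proposition: Kinet_Well-posedness}, if $\mathrm{supp}\, f_0 \subset B(0,\pazocal{R}_{in})$ then $\mathrm{supp}\, f_t \subset B(0,\pazocal{R}_*)$ for every $t\geq 0$, with $\pazocal{R}_* = \max\{\pazocal{R}_{in}, X_B\}$ independent of $t$. Hence for $f_t$-a.e. $x,y$ we have $|x-y|\leq 2\pazocal{R}_*$, and since $\psi$ is continuous and strictly positive on the compact interval $[0,2\pazocal{R}_*]$ it attains a positive minimum there, $\psi_* := \min_{0\leq r\leq 2\pazocal{R}_*}\psi(r) > 0$.

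Next I would record the elementary identity
\begin{equation*} \iint_{\mathbb{R}^{2d}} |x-y|^2 f_t(x) f_t(y)\,\diff x\,\diff y = 2\int_{\mathbb{R}^d} |x-M_1(t)|^2 f_t(x)\,\diff x = 2\,\pazocal{V}(f_t(\cdot)), \end{equation*}
which follows by expanding $|x-y|^2 = |x|^2 - 2x\cdot y + |y|^2$ and using $M_0(t)=1$. Combining this with the pointwise lower bound $\psi(|x-y|)\geq \psi_*$ valid on $\mathrm{supp}\, f_t \times \mathrm{supp}\, f_t$, the definition of the dissipation gives
\begin{equation*} D(f_t(\cdot)) = \iint_{\mathbb{R}^{2d}} \psi(|x-y|)\,|x-y|^2 f_t(x) f_t(y)\,\diff x\,\diff y \;\geq\; 2\,\psi_*\,\pazocal{V}(f_t(\cdot)), \qquad t\geq 0. \end{equation*}

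Finally, Lemma \ref{lemma: D(f)_regularity} (applicable since $\psi$ is Lipschitz, $N_\infty<\infty$, and $f_0\in\pazocal{P}_2(\mathbb{R}^d)$ is compactly supported) yields $D(f_t(\cdot))\to 0$ as $t\to\infty$, whence $0\leq \pazocal{V}(f_t(\cdot)) \leq \frac{1}{2\psi_*} D(f_t(\cdot)) \to 0$, which is the assertion; concentration around $M_1(t)$ is then just the vanishing of the second central moment. The only genuinely load-bearing point is the uniform-in-time support bound from Proposition \ref{proposition: Kinet_Well-posedness}: without it $\psi_*$ could degenerate to $0$ and the chain of inequalities would collapse. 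No Barbalat-type argument is needed at this stage, since that work has already been carried out inside Lemma \ref{lemma: D(f)_regularity}.
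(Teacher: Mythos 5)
Your proposal is correct and follows essentially the same route as the paper: uniform support bound from Proposition \ref{proposition: Kinet_Well-posedness}, positivity of $\psi_*$ on the compact support, the identity relating $\iint |x-y|^2 f_t(x)f_t(y)\,\diff x\,\diff y$ to $2\pazocal{V}(f_t(\cdot))$, and the decay $D(f_t(\cdot))\to 0$ from Lemma \ref{lemma: D(f)_regularity}. You merely spell out the constants and the variance identity that the paper leaves implicit.
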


\begin{proof} This follows directly from the fact that the support of any solution is bounded so we have that $|x-y|\leq 2 \pazocal{R}_*$ for all $x, y \in supp f_t$, $\forall t \geq 0$. The continuity of $\psi(r)$ implies that
\begin{equation} \label{eq: psi_*}\inf \limits_{x,y \, \in \, \mathrm{supp} f_t, \, \forall t\geq 0} \psi(|x-y|)=\psi_* > 0. \end{equation} This in turn implies that $\iint_{\mathbb{R}^{2d}} |x-y|^2 f_t(x) f_t(y) \, \diff x \diff y \xrightarrow{ t \to \infty } 0$ or $\pazocal{V}(f_t(\cdot)) \xrightarrow{ t \to \infty } 0$.
\end{proof}

The following lemma indicates that when the c.w. is of Type II then the solution concentrates into Dirac masses that are mutually distanced by a distance $\geq 1$.
\begin{lemma} \label{lemma: Concentration}[Concentration of mass] Let $N_{\infty}<\infty$ and consider a c.w. $\psi(r)$ that is Lipschitz continuous with compact support on $[0,1]$. Take any $\epsilon>0$, then for any two disjoint sets $A, B \subset \mathbb{R}^d$ that lie in a ball of radius $1$, i.e. $B(0,1)$, there exists no sequence of times $t_n \to \infty$ s.t. $ \min \{ m_n(A), m_n(B) \} > \epsilon$, where $m_n(A)=\int_A f_{t_n}(x) \, \diff x > \epsilon$ and similarly $m_n(B)=\int_B f_{t_n}(x) \, \diff x > \epsilon$. Furthermore, if $D(f_{\infty}(\cdot))=0$, then $f_{\infty}(x)= \sum \limits_{i=1}^J a_i \delta_{x_i}(x)$ for some $J \in \mathbb{N}_+$, and $a_i$, $x_i$ having the properties given in Theorem \ref{thm: Convergence_kin}.
\end{lemma}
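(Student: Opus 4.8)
The plan is to deduce the first assertion from a dissipation lower bound that is incompatible with $D(f_t(\cdot)) \xrightarrow{t \to \infty} 0$, which is exactly the content of Lemma \ref{lemma: D(f)_regularity}. Suppose, toward a contradiction, that a sequence $t_n \to \infty$ exists with $m_n(A) > \epsilon$ and $m_n(B) > \epsilon$ for all $n$. The geometric hypotheses are used to produce a uniform positive constant: since $A$ and $B$ are disjoint and both sit inside $B(0,1)$, for every $x \in A$ and $y \in B$ one has $x \neq y$ together with $|x-y|$ strictly below $1$ (within the interaction range of $\psi$), so by continuity and positivity of $\psi$ on $[0,1)$ there is $c_0 = c_0(A,B,\psi) > 0$ with $\psi(|x-y|)\,|x-y|^2 \ge c_0$ for all $(x,y) \in A\times B$. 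Restricting the double integral defining $D$ to $A \times B$ then yields
\begin{equation*} D(f_{t_n}(\cdot)) \ \ge\ \int_A \int_B \psi(|x-y|)\,|x-y|^2 f_{t_n}(x) f_{t_n}(y) \, \diff x \, \diff y \ \ge\ c_0\, m_n(A)\, m_n(B) \ >\ c_0\, \epsilon^2 \end{equation*}
for every $n$, contradicting $D(f_t(\cdot)) \to 0$. Hence no such sequence can exist, which is the claim.

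For the second assertion I would start from $D(f_\infty(\cdot)) = \iint_{\mathbb{R}^{2d}} \psi(|x-y|)\,|x-y|^2 \, \diff f_\infty(x) \, \diff f_\infty(y) = 0$. Because the integrand is strictly positive whenever $0 < |x-y| < 1$, this forces the product measure $f_\infty \otimes f_\infty$ to give zero mass to $\{(x,y) : 0 < |x-y| < 1\}$; equivalently, any two distinct points of $\mathrm{supp}\, f_\infty$ lie at mutual distance $\ge 1$. By Proposition \ref{proposition: Kinet_Well-posedness} every $f_t$, and hence the weak-$*$ limit $f_\infty$, is supported in $B(0,\pazocal{R}_*)$, and a $1$-separated subset of a bounded ball is finite; therefore $\mathrm{supp}\, f_\infty = \{x_1,\dots,x_J\}$ with $|x_i - x_j| \ge 1$ for $i \neq j$, so that $f_\infty = \sum_{i=1}^J a_i \delta_{x_i}$ with $a_i := f_\infty(\{x_i\}) > 0$. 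Finally $\sum_i a_i = 1$ since $f_\infty \in \pazocal{P}_1(\mathbb{R}^d)$, and combining the barycentre identity $\sum_i a_i x_i = M_1(\infty)$ with Corollary \ref{corollary: M_1} (in the case $N_\infty < \infty$) gives $\sum_i a_i x_i = M_*$, which is precisely the description recorded in Theorem \ref{thm: Convergence_kin}.

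I expect the main obstacle to be the first step: mere disjointness of $A$ and $B$ does not by itself bound $\psi(|x-y|)\,|x-y|^2$ away from zero on $A \times B$, since the two sets could be arbitrarily close, or a pair could sit at distance exactly $1$ where $\psi$ vanishes. The genuine separation and the upper bound $|x-y| < 1$ must be extracted from the placement of $A$ and $B$ (in the application these are small neighbourhoods of two candidate limit points at distance in $(0,1)$) — if necessary by passing to slightly shrunken sets that are at positive distance and strictly within the interaction range while still carrying mass above a fixed fraction of $\epsilon$ along a subsequence — before the uniform constant $c_0$ is available; everything after that is bookkeeping. It should be noted that the argument relies on $\psi$ being positive on $[0,1)$ (Lipschitz, vanishing at $r=1$), the natural reading of the Type II assumption: without it $D(f_\infty)=0$ would not force the $1$-separation of $\mathrm{supp}\, f_\infty$.
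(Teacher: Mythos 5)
Your proof of the first assertion is the same as the paper's: assume such a sequence $t_n\to\infty$ exists, bound $\psi(|x-y|)\,|x-y|^2$ below by a positive constant $c_0$ on $A\times B$, restrict the double integral defining $D$ to $A\times B$ to obtain $D(f_{t_n}(\cdot))\ge c_0\,\epsilon^2$, and contradict $D(f_t(\cdot))\to 0$ from Lemma \ref{lemma: D(f)_regularity}. You differ in two worthwhile respects. First, for the second assertion the paper simply cites \cite{CaFaTi} (pp.~9--10), whereas you supply a complete argument: $D(f_\infty(\cdot))=0$ forces $f_\infty\otimes f_\infty$ to vanish on $\{0<|x-y|<1\}$, hence any two distinct points of $\mathrm{supp}\,f_\infty$ are $1$-separated, hence the support is a finite set inside $B(0,\pazocal{R}_*)$, and the identities $\sum_i a_i=1$ and $\sum_i a_i x_i=M_*$ follow from Corollary \ref{corollary: M_1} together with the uniform support bound; this is sound and more self-contained than the paper. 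Second, your closing caveat is correct and in fact exposes a gap in the paper's own proof: the paper asserts $\inf_{(x,y)\in A\times B}\psi(|x-y|)\,|x-y|^2>0$ on the grounds that ``$A$, $B$ are compact and disjoint,'' but compactness is not in the statement, disjointness alone does not give positive separation, and even compact disjoint subsets of $B(0,1)$ may contain pairs at distance greater than $1$ (the ball has diameter $2$), where $\psi$ vanishes. As you note, the uniform constant $c_0$ is only available once $A$ and $B$ are positively separated with all mutual distances confined to a compact subinterval of $(0,1)$ --- which is how the lemma is actually invoked, with $A,B$ small neighbourhoods of candidate limit points --- and the $1$-separation of $\mathrm{supp}\,f_\infty$ additionally requires $\psi>0$ on $(0,1)$, a property of Type II weights that the paper leaves implicit. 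So your proposal matches the paper where the paper gives an argument, improves on it where the paper defers to a reference, and correctly identifies the unstated hypotheses both proofs need.
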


\begin{proof}
We begin by assuming that such a sequence of times $t_n \xrightarrow{ n \to \infty } \infty$ exists. We use the definition of $D(f_t(\cdot))$ at times $t_n$, but we observe that since the sets $A , B$ are compact and disjoint we have that $\inf \limits_{(x,y)\in A \times B}\psi(|x-y|)|x-y|^2 >0$. Thus, if we restrict $D(f_t(\cdot))$ over the set $A \times B$ we get the inequality
\begin{equation*} \epsilon^2 < \iint_{A \times B} f_{t_n}(x) f_{t_n}(y) \, \diff x \diff y \leq C_{A,B} D(f_{t_n}(\cdot)) ,\end{equation*}
where $C_{A,B}=\sup \limits_{(x,y)\in A \times B}\frac{1}{\psi(|x-y|)|x-y|^2}$. This inequality cannot hold if $D(f_{t_n}(\cdot)) \xrightarrow{ n \to \infty } 0$, and such a sequence $t_n \xrightarrow{ n \to \infty } \infty$ cannot exist. The proof of the second part of the statement is given in \cite{CaFaTi}(see pg. $9-10$).
\end{proof}

We are now in position to prove Theorem \ref{thm: Convergence_kin}
\begin{proof}[Proof of Theorem \ref{thm: Convergence_kin}]

\textbf{Case $N_{\infty} < \infty$ :}

We focus in the case of a $\psi(r)$ with a compact support, as the case of $\psi(r)>0$ has been answered in Corollary \ref{corollary: psi_pos}.

First we need to show that for every test function $\phi \in C_c^{\infty}(\mathbb{R}^d)$ with compact support, the limit $\lim \limits_{t \to \infty} \int \phi(x) f_t(x) \, \diff x$ exists and is finite. To prove this we first set $F(t)=\int \phi(x) f_t(x) \, \diff x$ and we show that $\int_0^{\infty} |\dot{F}(t)| \, \diff t <\infty$. Indeed, if we multiply \eqref{eq: kinet} by $\phi(x)$ and integrate we get
\begin{equation} \label{eq: F(t)} \dot{F}(t)+b(t, N_t) F(t) = b(t, N_t) \phi(X(t, N_t)) +\int_{\mathbb{R}^d} \nabla \phi(x) \cdot V[f_t](x)f_t(x) \, \diff x . \end{equation}
Note that we can use a standard symmetrization argument to rewrite the last term as
\begin{align*} \int_{\mathbb{R}^d} \nabla \phi(x) \cdot V[f_t](x)f_t(x) \, \diff x &= \int_{\mathbb{R}^{d}}  \int_{\mathbb{R}^{d}} \psi(|x-y|) \nabla \phi(x) \cdot (y-x) f_t(y) f_t(x) \, \diff x \, \diff y
\\ &=\frac{1}{2}\int_{\mathbb{R}^{d}} \int_{\mathbb{R}^{d}} \psi(|x-y|) (\nabla \phi(x)-\nabla \phi(y)) \cdot (y-x) f_t(y) f_t(x) \, \diff x \, \diff y.\end{align*}
Now the Lipschitz regularity of $\nabla \phi(x)$ implies that this term can be controlled by
\begin{equation*}  \Big | \int_{\mathbb{R}^d} \nabla \phi(x) \cdot V[f_t](x)f_t(x) \, \diff x \Big | \leq K D(f_t(\cdot)), \qquad \text{for some} \quad K>0 . \end{equation*}
We can solve \eqref{eq: F(t)} to get the semi-explicit solution
\begin{align*} F(t)=e^{-\int_0^t b(s, N_s)\, \diff s}F(0) &+ \int_0^t e^{-\int_s^t b(s', N_{s'})\, \diff s'} b(s, N_s) \phi(X(s, N_s)) \diff s \\ &+ \int_0^t \int_{\mathbb{R}^d} e^{-\int_s^t b(s', N_{s'})\, \diff s'} \nabla \phi(x) \cdot V[f_s](x)f_s(x) \, \diff x \, \diff s .\end{align*}
Hence since $\int_0^{\infty} D(f_t(\cdot))\, \diff t <\infty$ we easily see that $F(t)$ is uniformly bounded in time, i.e. $|F(t)|\leq C, \quad \forall t\geq 0$. Going back to \eqref{eq: F(t)} and using the uniform bound of $F(t)$ and the fact that $N_{\infty} <\infty$ we get that $\int |\dot{F}(t)| \, \diff t <\infty$. This proves that $f_t$ converges to some measure $f_{\infty}$ in the distributional sense. Furthermore, with the help of fact that $f_t$ has bounded second moments, it can be shown that $f_{\infty}$ is also a probability measure (see \cite{CaFaTi} for more details). Finally, a direct consequence of Lemmas \ref{lemma: D(f)_regularity} and \ref{lemma: Concentration} yields that $f_{\infty}(x)$ is a weighted sum of Dirac masses with the properties given in Theorem \ref{thm: Convergence_kin}.

\textbf{Case $N_{\infty}=\infty$ :}
In the case where $\psi(r)>0$ we have due to \eqref{eq: psi_*}
\begin{equation*} \pazocal{V}(f_t(\cdot)) \leq e^{-2\psi_* t} \pazocal{V}(f_0(\cdot)) + \int_0^t e^{-2\psi_* (t-s)} b(s, N_s)|M_1(s)-X(s, N_s)|^2 \, \diff s .\end{equation*}
Thus, if $b(t, N_t) \xrightarrow{ t \to \infty } 0$, then $\pazocal{V}(f_t(\cdot)) \xrightarrow{ t \to \infty } 0$.

If we know that $(C1)$ condition holds, then from $\dot{\pazocal{V}}(f_t(\cdot)) \leq -b(t, N_t) \pazocal{V}(f_t(\cdot)) + b(t, N_t) |M_1(t)-X(t, N_t)|^2$ we get directly that
\begin{equation*} \pazocal{V}(f_t(\cdot)) \leq e^{-\int_0^t b(s, N_s) \, \diff s} \pazocal{V}(f_0(\cdot)) +\int_0^t \frac{d}{ds}\left( e^{-\int_s^t b(s', N_{s'}) \, \diff s'} \right) |M_1(s)-X(s, N_s)|^2 \, \diff s .\end{equation*}
So with the help of Lemma \ref{lemma: C1'_condition} we get that $\pazocal{V}(f_t(\cdot)) \xrightarrow{ t \to \infty } 0$.

\end{proof}

\section{Conclusions and Open Problems}
\label{sec: Conclusions}
Population growth and the role that it plays in the collective behavior of IBMs has been systematically ignored from the enormous literature devoted to self organized systems. In this paper, we introduced and studied a continuous opinion model with a population of agents that increases according to some general growth rule characterized by a growth rate $b(t,N_t)$. We studied how the choice of rate  $b(t,N_t)$, as well as the opinions of the added population profile $X(t, N_t)$ determine the long time dynamics and asymptotics of our system.
There are many open problems and extensions of the theory we developed in this paper. We list some of them here:
\\
\textbf{Stochasticity of the opinions of incoming agents.} In this work we developed a theory where the boundary condition $X(t, N_t)$ that carries all the information for the opinions of incoming individuals is of the deterministic type. In reality, it may be more appropriate to consider that the opinions of incoming agents showcase some randomness and thus consider $X(t, N_t)$ as some stochastic process to capture this random nature for newly added states. A very basic choice is to consider e.g. \begin{equation} \label{eq: X_stoch} X(t, N_t) \sim f(t, N_t) W_t ,\end{equation}
where $f(t, N_t)$ is some unknown function, and $W_t$ is the standard Brownian motion with increments $W_t-W_s$ that follows the normal distribution $\pazocal{N}(0,t-s)$ ($0 \leq s \leq t$). In this new setting, the unknown function $x_t(s)$ that solves \eqref{eq: conses_symm} is not any more deterministic but stochastic in nature. It is therefore very important to define a proper notion of solution to \eqref{eq: conses_symm} so that the problem remains well-posed in some sense. Another very important question to consider due to the stochasticity involved in \eqref{eq: X_stoch} is the appropriate notion that we must use in order to study the convergence properties of a solution $x_t(s)$.
\\
\textbf{Hegselmann-Krause model.} The original non-symmetric H-K model takes the following form in the continuous agent setting with a growing number of agents, i.e.
\begin{equation} \label{eq: H-K} \dot{x}_t(s)=\int_0 ^{N_t} \frac{\psi(|x_t(s')-x_t(s)|)}{\int_0 ^{N_t} \psi(|x_t(s'')-x_t(s)|) \, \diff s''} (x_t(s')-x_t(s))\, \diff s', \quad 0 \leq s < N_t . \end{equation} Well-posedeness of \eqref{eq: H-K} and the way that the growth rate $b(t,N_t)$ and the boundary condition $X(t, N_t)$ affect consensus reaching and creation of clusters are natural questions. Meanwhile, a very good modeling assumption that can be used in both the symmetric and non-symmetric Hegselmann-Krause is to consider a non-deterministic boundary condition $X(t, N_t)$.
\\
\textbf{Higher-Order models.} Another interesting direction is the treatment of higher-order models that use some sort of \textbf{environmental averaging} \cite{Shv}, as opposed to the first-order opinion model in this paper. Higher order models are more suited for the study of flocks and swarms where the unknown variables are positions and velocities as opposed to opinion models where the unknown states are modeled as positions in $\mathbb{R}^d$.  Environmental averaging is a term coined for systems that evolve following some sort of averaging rule in the space of velocities or in other mechanical variables. The most prominent example of a flocking model that follows an environmental averaging rule is the now classical Cucker-Smale model \cite{CuSm}. Other higher-order models may include not only position and velocity variables, but also spins of agents such as the Inertial Spin model which is better suited for the study of turning in flocks \cite{CaCaGiGr, HaKiKiShi, Mar1}. Such models can also be studied under the effect of population growth in the number of interacting agents.
\\
\textbf{More realistic discrete population models that incorporate both birth and death of individuals.} The consensus model that we discussed in this article serves only as a vehicle to understand how the two components of birth rate and the opinions of incoming population both play a major role in understanding their influence in collective behavior. That being said, there are some obvious drawbacks in the proposed model, with the most obvious one being the fact that we have not yet accounted for any population exiting the system, i.e. death of individuals. We have deliberately omitted any instance of individuals exiting the system because death introduces extra modeling difficulties.

To account for both birth and death it is more appropriate to consider a discrete system where the total population is stochastic in nature both for the particles that enter and leave. More specifically, the challenge with death is that we need to find a proper way to pick the population that we remove, since each individual has a specific state as they exit, and the time that a particle spends interacting with others is of crucial importance. A realistic modeling assumption is a scenario where individuals exit with a probability that depends on the ``age'' i.e. the time that each agent is in the system. Ideally, if we draw inspiration from living systems we know that the age of each individual is a good determining factor of the chance of survival in the next fixed time interval, as is manifested in many Actuarial Life tables. Thus, a good starting point is to assume that each agent has a life expectancy that follows a distribution that is consistent with certain actuarial tables.

\section{Appendix} \label{sec: Appendix}
\subsection{Proof of Lemmas \ref{lemma: Bound}--\ref{lemma: C1'_condition}}

\begin{proof}[Proof of Lemma \ref{lemma: Bound}]

Assume that $|x_{t^*}(s^*)|=\pazocal{R}$ at some time $t^*>0$ and for some $s^* \in [0,N_{t^*}]$. Assume also that $t^*$ is the first time that an opinion has magnitude $\pazocal{R}$, and for all $s \in [0,N_{t^*}]$ we have that $|x_{t^*}(s)| \leq \pazocal{R}$. Let also $B(0,\pazocal{R})$ be the d-dimensional ball with center $0$ and radius $\pazocal{R}$, and consider also the tangent plane to $B(0,\pazocal{R})$ at the point $x_{t^*}(s^*)$. We show that $\dot{x}_{t^*}(s^*)$ points in the direction of the tangent plane that includes the ball, and thus the magnitude of $x_{t^*}(s^*)$ cannot exceed $\pazocal{R}$. Indeed, we have that
\begin{equation*} \dot{x}_t(s^*)\Big|_{t=t^*}=\frac{1}{N_{t^*}}\int_0^{N_{t^*}} \psi(|x_{t^*}(s')-x_{t^*}(s^*)|)(x_{t^*}(s')-x_{t^*}(s^*)) \, \diff s'.\end{equation*} Since we can always normalize the interaction Kernel, we may assume that \begin{equation*}\frac{1}{N_{t^*}}\int_0^{N_{t^*}} \psi(|x_{t^*}(s')-x_{t^*}(s^*)|) \, \diff s' =1 .\end{equation*} Then we rewrite the above as \begin{equation*}  \dot{x}_t(s^*)\Big|_{t=t^*}=\frac{1}{N_{t^*}}\int_0^{N_{t^*}} \psi(|x_{t^*}(s')-x_{t^*}(s^*)|) x_{t^*}(s') \, \diff s' -  x_{t^*}(s^*) .\end{equation*} The first term $\frac{1}{N_{t^*}}\int_0^{N_{t^*}} \psi(|x_{t^*}(s')-x_{t^*}(s^*)|) x_{t^*}(s') \, \diff s'$ is a (continuous) convex combination of vectors inside $B(0,\pazocal{R})$. This means that in effect it is also a vector in $B(0,\pazocal{R})$. Since $x_{t^*}(s^*)$ is a vector on the surface of $B(0,\pazocal{R})$, we have that $ \dot{x}_t(s^*)\Big|_{t=t^*}$ as a vector points in the part of the space $\mathbb{R}^d$ (separated by the tangent plane) that includes the ball $B(0,\pazocal{R})$. This implies that $x_{t^*}(s^*)$ cannot escape $B(0,\pazocal{R})$. \end{proof}

\begin{proof}[Proof of Lemma \ref{lemma: Differ}]

We begin by writing the difference of the integral from $t$ to $t+h$ for some $h \ll 1$
\begin{align*} \int_0^{N_{t+h}} G(t+h,x_{t+h}(s))\, \diff s -\int_0^{N_{t}} G(t,x_{t}(s)\, \diff s &= \int_{N_t}^{N_{t+h}} G(t+h,x_{t+h}(s))\, \diff s \\ &+ \int_0^{N_t}\left( G(t+h,x_{t+h}(s)) - G(t,x_{t}(s) \right)\, \diff s. \end{align*}

The last integral term can be computed to its leading order by a simple Taylor expansion, i.e.
\begin{equation*} \int_0^{N_t} \left( G(t+h,x_{t+h}(s)) - G(t,x_{t}(s) \right) \, \diff s =h \int_0^{N_t}\left(\partial_t G(t,x_t(s))+G_x(t,x_t(s)) \cdot \dot{x}_t(s) \right) \, \diff s +O(h^2) . \end{equation*}
Since we have continuity w.r.t $s$ of $x_t(s)$ around $s=N_t$, we may use the mean value to write
\begin{align*} \int_{N_t}^{N_{t+h}} G(t+h,x_{t+h}(s)) \, \diff s &= (N_{t+h}-N_t) \, G(t+h,x_{t+h}(s'))\\&=h\dot{N}_t \, G(t+h,x_{t+h}(s')) +O(h^2) \quad \text{for some} \quad s' \in (N_t, N_{t+h}). \end{align*}

\end{proof}

\begin{proof}[Proof of Lemma \ref{lemma: C1'_condition}]

We fix $\delta>0$, and since $g(t) \xrightarrow{ t \to \infty } 0$ we know there exists some $T_{\delta}>0$ s.t. $|g(t)| < \delta$ for $t > T_{\delta}$. We now decompose integral as
\begin{align*} \frac{1}{N_t}\int_0^t g(s) \dot{N}_s  \, \diff s =\frac{1}{N_t}\int_0^{T_{\delta}} g(s) \dot{N}_s \, \diff s + \frac{1}{N_t} \int_{T_{\delta}}^t g(s) \dot{N}_s \, \diff s . \end{align*}
It is important to note that both integral terms can be controlled for large values of $t$, i.e.
\begin{equation*} \begin{aligned} \bigg|\frac{1}{N_t} \int_0^{T_{\delta}} g(s)  \dot{N}_s \, \diff s \bigg| & \leq \frac{1}{N_t} \|g  \|_{L^\infty([0,\infty))} \left( N_{T_{\delta}}-N_0 \right) .\end{aligned} \end{equation*}
Also,
\begin{equation*} \Bigg| \frac{1}{N_t}\int_{T_{\delta}}^t g(s) \dot{N}_s \, \diff s \Bigg| \leq \delta \left( 1 - \frac{N_{T_{\delta}}}{N_t}\right) < \delta . \end{equation*}
Now, since we have $N_\infty=\infty$ we may choose $T'_{\delta}>T_{\delta}>0$ s.t. for all $t > T'_{\delta}$ we have $ \|g \|_{L^\infty([0,\infty))} \frac{N_{T_{\delta}}}{N_t}<\delta$. This implies that
\begin{equation*} \Bigg| \frac{1}{N_t}\int_0^t g(s) \dot{N}_s \, \diff s \Bigg| <2 \delta , \quad \text{for any}\quad t>T'_{\delta}. \end{equation*} The choice of $\delta$ was originally arbitrary and the result follows.
\end{proof}

\end{document}